\newcommand{\argmin}{\mathop{\rm argmin}}
\newcommand{\less}{\leqslant}
\newcommand{\gre}{\geqslant}
\newcommand{\defn}{\ensuremath{: \, =}}
\newcommand{\xstar}{\ensuremath{x^*}}
\newcommand{\real}{\ensuremath{\mathbb{R}}}
\newcommand{\Exs}{\ensuremath{\mathbb{E}}}
\newtheorem{theorem}{Theorem}
\newtheorem{lemma}{Lemma}[section]
\newtheorem{lemmaappendix}{Lemma}
\newtheorem{remark}{Remark}
\DeclareMathOperator{\trace}{trace}
\newcommand{\citep}{\cite}
\newcommand{\citet}{\cite}
\icmltitlerunning{Limiting Density of SRHT Matrices and Applications}
\begin{document}

\twocolumn[
\icmltitle{Optimal Randomized First-Order Methods for Least-Squares Problems}

\icmlsetsymbol{equal}{*}

\begin{icmlauthorlist}
\icmlauthor{Jonathan Lacotte}{}
\icmlauthor{Mert Pilanci}{}
\end{icmlauthorlist}



\icmlkeywords{Random matrices; Subsampled randomized Hadamard transform; Limiting spectral distribution; Least-squares optimization.}

\vskip 0.3in
]


\begin{abstract}
We provide an exact analysis of a class of randomized algorithms for solving overdetermined least-squares problems. We consider first-order methods, where the gradients are pre-conditioned by an approximation of the Hessian, based on a subspace embedding of the data matrix. This class of algorithms encompasses several randomized methods among the fastest solvers for least-squares problems. We focus on two classical embeddings, namely, Gaussian projections and subsampled randomized Hadamard transforms (SRHT). Our key technical innovation is the derivation of the limiting spectral density of SRHT embeddings. Leveraging this novel result, we derive the family of normalized orthogonal polynomials of the SRHT density and we find the optimal pre-conditioned first-order method along with its rate of convergence. Our analysis of Gaussian embeddings proceeds similarly, and leverages classical random matrix theory results. In particular, we show that for a given sketch size, SRHT embeddings exhibits a faster rate of convergence than Gaussian embeddings. Then, we propose a new algorithm by optimizing the computational complexity over the choice of the sketching dimension. To our knowledge, our resulting algorithm yields the best known complexity for solving least-squares problems with no condition number dependence.
\end{abstract}

\section{Introduction}
\label{SectionIntroduction}

We study the performance of a randomized method, namely, the Hessian sketch~\citep{pilanci2016iterative}, in the context of (overdetermined) least-squares problems,
\begin{align}
\label{EqnMain}
    x^* \defn \argmin_{x \in \real^d} \left\{f(x) \defn \frac{1}{2} \|Ax-b\|^2\right\}\,,
\end{align}
where $A \in \real^{n \times d}$ is a given data matrix with $n \gre d$ and $b \in \real^n$ is a vector of observations. For simplicity of notations, we will assume throughout this work that $\text{rank}(A)=d$. 

Many works have developed randomized algorithms~\citep{avron2010blendenpik, rokhlin2008fast, drineas2011faster, pilanci2015randomized} for solving~\eqref{EqnMain}, based on sketching methods. The latter involve using a random matrix $S \in \real^{m \times n}$ to project the data $A$ and/or $b$ to a lower dimensional space ($m \ll n$), and then approximately solving the least-squares problem using the sketch $SA$ and/or $Sb$. The most classical sketch is a matrix $S \in \real^{m \times n}$ with independent and identically distributed (i.i.d.) Gaussian entries $\mathcal{N}(0,m^{-1})$, for which forming $SA$ requires in general $\mathcal{O}(mnd)$ basic operations (using classical matrix multiplication). This is larger than the cost $\mathcal{O}(nd^2)$ of solving~\eqref{EqnMain} through standard matrix factorization methods, provided that $m \gre d$. Another well-studied embedding is the (truncated) $m \times n$ Haar matrix $S$, whose rows are orthonormal and with range uniformly distributed among the subspaces of $\real^n$ with dimension $m$. However, it requires time $\mathcal{O}(n m^2)$ to be formed, through a Gram-Schmidt procedure, which is also larger than $\mathcal{O}(nd^2)$. An alternative embedding which verifies orthogonality properties is the SRHT~\citep{ailon2006approximate}, which is based on the Walsh-Hadamard transform. Due to the recursive structure of the latter, the sketch $SA$ can be formed in $\mathcal{O}(nd \log m)$ time, so that the SRHT is often viewed as a standard reference point for comparing sketching algorithms. 

It has been observed in several contexts that random projections with i.i.d.~entries degrade the performance of the approximate solution compared to orthogonal projections~\citep{mahoney2011randomized, mahoney2016structural, drineas2016randnla, dobriban2019asymptotics}. Consequently, along with computational considerations, this suggests to consider the SRHT over Gaussian or Haar projections. On the other hand, in order to pick optimal algorithm's parameters, it is usually necessary to have a tight characterization of the spectrum of $C_S \defn U^\top S^\top S U$, where $U$ is the matrix of left singular vectors of $A$, which is the case for Gaussian embeddings.

Using the standard prediction (semi-)norm $\|A(\widetilde x - x^*)\|^2$ as the evaluation criterion for an approximate solution $\widetilde x$, iterative methods (e.g., gradient descent or the conjugate gradient algorithm) have time complexity which usually scales proportionally to the condition number $\kappa$ of the matrix $A$ -- defined as the ratio between the largest and smallest singular values of $A$ --, and this becomes prohibitively large when $\kappa \gg 1$. To address the latter issue, we introduce a pre-conditioning method, namely, the Hessian sketch~\cite{pilanci2016iterative}, which approximates the Hessian $H=A^\top A$ of $f(x)$ by $H_S = A^\top S^\top S A$. Then, we consider the following class of pre-conditioned first-order methods (also referred to as a quasi-Newton method in the optimization literature),
\begin{align}
\label{EqnFirstOrder}
    x_t \in x_0 + H_S^{-1} \cdot \text{span}\left\{ \nabla f(x_0), \dots, \nabla f(x_{t-1})\right\}\,,
\end{align}
Several versions of~\eqref{EqnFirstOrder} have been studied. For instance, it has been recently shown by~\citep{ozaslan2019iterative} that the Heavy-ball update
\begin{align}
\label{EqnIHSUpdate}
    x_{t+1} = x_{t} - \mu_{t} H_{S}^{-1} \nabla f(x_t) + \beta_{t} (x_t\!-\!x_{t-1})
\end{align}
yields a sequence of iterates whose convergence rate does not depend on the spectrum of $A$, but only on the concentration of the matrix $C_S$ around the identity, and they show that this convergence rate is equal to the ratio $d/m$ both for Gaussian and SRHT embeddings. Notably, this rate does not depend on the sample size $n$. For a Gaussian embedding, this makes intuitively sense since the limiting spectral distribution of $C_S$ is the Marchenko-Pastur law~\citep{marchenko1967distribution} with scale parameter $\rho$, edge eigenvalues $a \!=\! (1\!-\!\sqrt{\rho})^2$ and $b \!=\! (1\!+\!\sqrt{\rho})^2$, and density 
\begin{align}
\label{EqnMarchenkoPasturDensity}
	\mu_{\rho}(x) = \frac{\sqrt{(b-x)_+(x-a)_+}}{2\pi \rho x}\,,
\end{align} 
where $y_+=\max\{y,0\}$, and it does not depend on the sample size $n$ but only on the limit ratio $\rho \defn \lim d/m$. However, for a SRHT embedding, it is unclear if the dimension $n$ affects the best achievable convergence rate. In a related vein, the authors of~\citet{lacotteiterative20} considered the Heavy-ball update~\eqref{EqnIHSUpdate} where at each iteration the sketching SRHT matrix $S\!=\!S_t$ is refreshed (i.e., re-sampled independently of $S_0,\dots,S_{t-1}$) so that $H_S\!=\!H_{S_t}$ is also re-computed. They show that Haar and SRHT embeddings yield the same convergence rate $\rho_h^\mathrm{ref}\!\defn\!\rho \cdot \frac{\xi (1-\xi)}{\gamma^2+\xi-2\gamma\xi}$, which indeed depends on the three relevant dimensions $m,d,n$ through the aspect ratios $\rho$, $\gamma\defn\lim d/n$ and $\xi\defn\lim m/n$, which is always strictly smaller than $\rho$, the convergence rate one would obtain with refreshed Gaussian embeddings~\cite{lacotte2019faster}.

In this work, we consider a definition of the SRHT slightly different than its classical version~\cite{ailon2006approximate}, which has been introduced in~\cite{dobriban2019asymptotics, liu2019ridge}. For an integer $n=2^p$ with $p \gre 1$, the Walsh-Hadamard transform is defined recursively as $H_n = \frac{1}{\sqrt{2}} \begin{bmatrix} H_{n/2} & H_{n/2} \\ H_{n/2} & -H_{n/2} \end{bmatrix}$ with $H_1 = 1$. Our transform $A \mapsto S A$ first \emph{randomly permutes} the rows of $A$, before applying the classical transform. This has negligible cost $\mathcal{O}(n)$ compared to the cost $\mathcal{O}(nd \log m)$ of the matrix multiplication $A \mapsto SA$, and \emph{breaks the non-uniformity} in the data. That is, we define the $n\times n$ subsampled randomized Hadamard matrix as $S = B H_n D P$, where $B$ is an $n \times n$ diagonal sampling matrix of i.i.d.~Bernoulli random variables with success probability $m/n$, $H_n$ is the $n \times n$ Walsh-Hadamard matrix, $D$ is an $n \times n$ diagonal matrix of i.i.d.~sign random variables, equal to $\pm 1$ with equal probability, and $P\in\real^{n\times n}$ is a uniformly distributed permutation matrix. At the last step, we discard the zero rows of $S$, so that it becomes an $\widetilde m \times n$ orthogonal matrix with $\widetilde m \sim \mathrm{Binomial}(m/n,n)$, and the ratio $\widetilde m / n$ concentrates fast around $\xi$ while $n \to \infty$. Although the dimension $\widetilde m$ is random, we refer to $S$ as an $m \times n$ SRHT matrix.

We will focus exclusively on (pre-conditioned) \textit{first-order methods} of the form~\eqref{EqnFirstOrder} with a fixed embedding $S$, and our goal is to answer the following questions. What are the best achievable convergence rates for, respectively, Gaussian and SRHT embeddings? What are the corresponding optimal algorithms? How do these rates compare to each other and to that of state-of-the-art randomized iterative methods for solving~\eqref{EqnMain}?

\subsection{Technical background, notations and assumptions}
\label{SectionMathematicalBackground}

We will assume that $\lim_{n \to \infty} \frac{d}{n} = \gamma \in (0,1)$, $\lim_{n \to \infty} \frac{m}{n} = \xi \in (\gamma, 1)$ and $\rho = \frac{\gamma}{\xi} \in (0,1)$. We denote $\|z\| \equiv \|z\|_2$ the Euclidean norm of a vector $z$, $\|M\|_2$ the operator norm of a matrix $M$, and $\|M\|_F$ its Frobenius norm. Given a sequence of iterates $\{x_t\}$, we denote the error at time $t$ by $\Delta_t = U^\top A (x_t - \xstar)$. Note that $\|\Delta_t\|^2 = \|A(x_t-\xstar)\|^2$. Our evaluation criterion is the error $\lim_{n \to \infty} \Exs[\|\Delta_t\|^2] / \Exs[\|\Delta_0\|^2]$, and we call its (asymptotic) rate of convergence the quantity $\limsup_{t \to \infty} \left(\lim_{n \to \infty} \Exs[\|\Delta_t\|^2] / \Exs[\|\Delta_0\|^2]\right)^{1/t}$. 

As we focus on infinite-dimensional regimes, our technical analysis is based on asymptotic random matrix theory, and we refer the reader to~\citep{bai2009spectral, paul2014random,yao2015large} for an extensive introduction to this field. For a random Hermitian matrix $M_n$ of size $n \times n$, the empirical spectral distribution (e.s.d.) of $M_n$ is the (cumulative) distribution function of its eigenvalues $\lambda_{1}, \hdots, \lambda_{n}$, i.e., $F_{M_n}(x) \defn \frac{1}{n} \sum_{j=1}^n \mathbf{1}\left\{\lambda_{j} \less x\right\}$ for $x \in \real$, which has density $f_{M_n}(x) = \frac{1}{n} \sum_{j=1}^n \delta_{\lambda_j}(x)$ with $\delta_{\lambda}$ the Dirac measure at $\lambda$. Due to the randomness of the eigenvalues, $F_{M_n}$ is random. The relevant aspect of some classes of large $n \times n$ symmetric random matrices $M_n$ is that, almost surely, the e.s.d.~$F_{M_n}$ converges weakly towards a non-random distribution $F$, as $n \to \infty$. This function $F$, if it exists, will be called the \emph{limiting spectral distribution} (l.s.d.)~of $M_n$. Key to our analysis is the notion of orthogonal polynomials, which are fundamental both in optimization~\cite{rutishauser1959theory} and in random matrix theory. We write $\real_t[X]$ the set of real polynomials with degree less than $t$, and $\real_t^0[X]$ the set of polynomials $P \in \real_t[X]$ such that $P(0)=1$. For a complex number $z \in \mathbb{C}$, we denote respectively by $\mathrm{Re}(z)$ and $\mathrm{Im}(z)$ its real and imaginary parts, and we use $\mathbb{C}_+$ for the complex numbers with positive imaginary parts, and $\mathbb{R}_+$ for the positive real numbers. For two sequences of real positive numbers $\{a_t\}$ and $\{b_t\}$, we write $a_t \asymp b_t$ if $\liminf \frac{a_t}{b_t} > 0$ and $\limsup \frac{a_t}{b_t} < \infty$.

We will assume that the first iterate $x_0$ is random such that $\Exs[x_0] = 0$, and, that the condition number of the matrix $U^\top A\Exs[x_0 x_0^\top] A^\top U + U^\top bb^\top U$ remains bounded as the dimensions grow. Essentially, this states that the condition number of $A$ does not degenerate to $+\infty$ as the dimensions grow.

\subsection{Overview of our results and contributions}

We have the following contributions.
\begin{enumerate}
    \item For Gaussian embeddings, we characterize the algorithm (Algorithm~\ref{AlgorithmOptimalFirstOrderGaussian}) which attains the infimum of the error $\lim_{n \to \infty} \Exs[\|\Delta^2_t\|] / \Exs[\|\Delta_0\|^2]$, and we show that it corresponds to the Heavy-ball method with constant step size $\mu_t \!=\! (1-\rho)^2$ and momentum parameter $\beta_t\!=\!\rho$. Further, we show that the infimum of the error is equal to $\rho^t$.
    \item For SRHT embeddings, we perform a similar analysis, and find the optimal first-order method (Algorithm~\ref{AlgorithmOptimalFirstOrderHaar}). Notably, it is a Heavy-ball update with non-constant step sizes and momentum parameters. Further, we show that its rate of convergence is $\rho_h\!\defn\! \rho \cdot \frac{1-\xi}{1-\gamma}$, which is always strictly smaller than $\rho$ and $\rho_h^\mathrm{ref}$, i.e., Algorithm~\ref{AlgorithmOptimalFirstOrderHaar} has uniformly better convergence rate than that of Gaussian embeddings or the Heavy-ball method with refreshed SRHT embeddings. Even though our theoretical results hold asymptotically, we verify empirically that our theoretical predictions hold, even for sample sizes $n \gtrsim 1000$, and that Algorithm~\ref{AlgorithmOptimalFirstOrderHaar} is faster in practice than the other aforementioned algorithms.
    \item We characterize explicitly the density $f_{h,r}$ of the l.s.d~of the matrix $\frac{n}{m} C_S$, which is given by
    \begin{align}
        f_{h,r}(x) = \frac{\sqrt{(\Lambda_{h,r}-x)_+(x-\lambda_{h,r})_+}}{2\pi \rho x (1-\xi x)}\,,
    \end{align}
    where the edge (i.e., extreme) eigenvalues are $\lambda_{h,r}=(\sqrt{1-\gamma}-\sqrt{(1-\xi)\rho})^2$ and $\Lambda_{h,r}=(\sqrt{1-\gamma}+\sqrt{(1-\xi)\rho})^2$. This characterization of the limiting density is of independent interest, as it might have several implications beyond least-squares optimization.
    \item  Finally, we show that Algorithm~\ref{AlgorithmOptimalFirstOrderHaar} has the best known complexity to solve~\eqref{EqnMain} with no condition number dependence.
\end{enumerate}
Except for the time complexity results, all our results regarding the SRHT hold exactly the same with Haar embeddings, since they both yield the same limiting spectral distributions.

\subsection{Other related work}

The design of optimal first-order methods for quadratic optimization problems has been recently considered in~\citet{pedregosaorthogonal19}. In contrast, they assume the data matrix to be random and they require its spectrum to be known beforehand, which is often impractical. On the other hand, our class of first-order methods applies a randomized pre-conditioning, so that only the spectral distribution of the matrix $C_S$ is required, and this is universal, i.e., independent of the spectrum of $A$. Therefore, by characterizing the l.s.d.~of $C_S$ for some classical embeddings, we are able to optimize the \emph{exact} error for any data matrix $A$. We note that existing methods do not directly minimize the error, but a worst-case upper bound as in Chebyshev iteration and Conjugate Gradient methods~\cite{saad2003iterative}.

Besides the Hessian sketch, there are many other efficient pre-conditioned iterative methods which aim to address the aforementioned conditioning issue, based on an SRHT sketch of the data (or closely related sketches based on the Fourier transform). Randomized \emph{right} pre-conditioning methods~\citep{avron2010blendenpik, rokhlin2008fast} compute first a matrix $P$ -- which itself depends on $SA$ -- such that the condition number of $AP^{-1}$ is $\mathcal{O}(1)$, and then apply any standard iterative algorithm to the pre-conditioned least-squares objective $\|AP^{-1}y-b\|^2$. SRHT sketches are also used for a wide range of applications across numerical linear algebra, statistics and convex optimization, such as low-rank matrix factorization~\citep{halko2011finding, witten2015randomized}, kernel regression~\citep{yang2017randomized}, random subspace optimization~\citep{lacotte2019high}, or, sketch and solve linear regression~\citep{dobriban2019asymptotics}. Hence, a refined analysis of the SRHT may also lead to better algorithms in these fields.

\section{Optimal first-order method for classical embeddings}

Let $S$ be an $m \times n$ Gaussian or SRHT embedding. Denote by $\mu$ the l.s.d.~of $C_S$. We say that a family of polynomials $\{R_k\}$ is orthogonal with respect to $\mu$ if $\int R_k R_\ell \,\mathrm{d} \mu \!=\! 0$ for any $k \!\neq\! \ell$. The next result establishes the link between polynomials and the pre-conditioned first-order methods~\eqref{EqnFirstOrder} we consider, and its proof is deferred to Appendix~\ref{ProofLemmaPolynomialRecursion}.
\begin{lemma}
\label{LemmaPolynomialRecursion}
Let $\{x_t\}$ be generated by some first-order method~\eqref{EqnFirstOrder}. Then, for any iteration $t \gre 0$, there exists a polynomial $p_t \in \real_t^0[X]$ such that $\Delta_t = p_t\!\left(C_S^{-1}\right) \cdot \Delta_0$. Further, it holds that 
\begin{align}
\label{EqnSquaredNorm}
    \lim_{n \to \infty} \frac{\Exs[\|\Delta_t\|^2]}{\Exs[\|\Delta_0\|^2]}  =  \int_\real p_t^2\!\left(\lambda^{-1}\right) \mathrm{d} \mu(\lambda)\,.
\end{align}
\end{lemma}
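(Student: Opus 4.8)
The plan is to prove the two claims in sequence: first the existence of the residual polynomial $p_t\in\real_t^0[X]$ with $\Delta_t = p_t(C_S^{-1})\Delta_0$, and then the integral formula~\eqref{EqnSquaredNorm}.

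For the first part, I would unroll the recursion~\eqref{EqnFirstOrder}. By definition, $x_t\in x_0 + H_S^{-1}\cdot\mathrm{span}\{\nabla f(x_0),\dots,\nabla f(x_{t-1})\}$. Since $\nabla f(x) = A^\top A(x-x^*) = H(x-x^*)$ and $\Delta_s = U^\top A(x_s-x^*)$, one checks inductively that $A^\top\nabla f(x_s)$-type terms, after left-multiplication by $H_S^{-1}$ and then by $U^\top A$, get transformed into polynomials in $C_S^{-1} = (U^\top S^\top S U)^{-1}$ applied to $\Delta_s$. Concretely, writing $A = U\Sigma V^\top$, we have $H_S^{-1} = V\Sigma^{-1} C_S^{-1} \Sigma^{-1} V^\top$ and $H_S^{-1}\nabla f(x_s) = V\Sigma^{-1}C_S^{-1}\Sigma^{-1}\Sigma U^\top A(x_s - x^*) = V\Sigma^{-1}C_S^{-1}\Delta_s$, so that $U^\top A H_S^{-1}\nabla f(x_s) = \Sigma V^\top V\Sigma^{-1} C_S^{-1}\Delta_s = C_S^{-1}\Delta_s$. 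Hence $\Delta_t - \Delta_0 \in \mathrm{span}\{C_S^{-1}\Delta_0,\dots\}$, and an induction on $t$ shows $\Delta_t\in\Delta_0 + \mathrm{span}\{C_S^{-1}\Delta_0,\dots,(C_S^{-1})^t\Delta_0\}$, i.e. $\Delta_t = p_t(C_S^{-1})\Delta_0$ for some $p_t$ of degree at most $t$; the constraint $p_t(0)=1$ (equivalently, the coefficient structure giving $\Delta_t = \Delta_0 + (\text{terms with at least one }C_S^{-1})$) comes from the fact that the recursion adds only multiples of $C_S^{-1}$ times previous errors to $\Delta_0$. I would present this as a short induction. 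This step is essentially bookkeeping and should not be the obstacle.

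For the second part, I need to pass from the finite-$n$ quantity $\Exs\|\Delta_t\|^2 = \Exs\big[\Delta_0^\top p_t(C_S^{-1})^2\Delta_0\big]$ (using symmetry of $C_S$, so $p_t(C_S^{-1})$ is symmetric) to the limiting integral. Conditioning on $S$ and taking expectation over the randomness of $x_0$ (recall $\Exs[x_0]=0$, so $\Exs[\Delta_0\Delta_0^\top] = U^\top A\,\Exs[x_0x_0^\top]\,A^\top U + U^\top bb^\top U =: G$, using $\Delta_0 = U^\top A x_0 - U^\top A x^* = U^\top A x_0 - U^\top b$ up to the projection — here I should be careful that $A x^* = UU^\top b$ so $\Delta_0 = U^\top(Ax_0 - b)$), we get $\Exs\|\Delta_t\|^2 = \Exs_S\,\trace\!\big(p_t(C_S^{-1})^2 G\big)$, and similarly $\Exs\|\Delta_0\|^2 = \trace(G)$. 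The assumption that $G$ has bounded condition number as $n\to\infty$ lets me compare $\trace(p_t(C_S^{-1})^2 G)$ with $\frac{\trace(G)}{d}\trace(p_t(C_S^{-1})^2)$ up to constants, but to get the exact limit I would argue that $G$ is (asymptotically) ``isotropic enough'' relative to the eigenbasis of $C_S$ — or more cleanly, invoke the standard fact that for these embeddings $C_S$ and $G$ are asymptotically free / the eigenvectors of $C_S$ are delocalized, so that $\frac{1}{d}\trace(p_t(C_S^{-1})^2 G) - \frac{\trace(G)}{d}\cdot\frac{1}{d}\trace(p_t(C_S^{-1})^2) \to 0$. Then $\frac{1}{d}\trace(p_t(C_S^{-1})^2) = \int p_t^2(\lambda^{-1})\,dF_{C_S}(\lambda) \to \int p_t^2(\lambda^{-1})\,d\mu(\lambda)$ by weak convergence of the e.s.d. (and boundedness of the support of $\mu$ away from $0$, which makes $\lambda\mapsto p_t^2(\lambda^{-1})$ bounded and continuous on the relevant set, justifying the passage to the limit under weak convergence). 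Combining, $\Exs\|\Delta_t\|^2/\Exs\|\Delta_0\|^2 \to \int p_t^2(\lambda^{-1})\,d\mu(\lambda)$.

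The main obstacle is the justification that the cross term involving $G$ vanishes in the limit, i.e. that the quadratic form $\Delta_0^\top p_t(C_S^{-1})^2\Delta_0$ concentrates on $\frac{\trace(G)}{d}\trace(p_t(C_S^{-1})^2)$; this is where the randomness/genericity of the embedding $S$ (Gaussian rotational invariance, or Hadamard-plus-random-permutation-plus-random-signs for the SRHT) must be used to ensure the eigenvectors of $C_S$ are sufficiently spread out relative to the fixed matrix $G$, together with the bounded-condition-number assumption to control the ratio of traces. I would handle the Gaussian case via rotational invariance (so $C_S$ is independent of its eigenvectors, which are Haar-distributed, making the reduction exact in expectation after a rotation), and the SRHT/Haar case by citing the delocalization properties established for these ensembles; the degree of $p_t$ being fixed while $t$ stays fixed keeps everything uniformly bounded, so there are no uniformity-in-$t$ issues to worry about here.
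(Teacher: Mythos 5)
Your first part (unrolling the recursion, the SVD computation giving $U^\top A\,H_S^{-1}\nabla f(x_j) = C_S^{-1}\Delta_j$, and the induction producing $p_t\in\real_t^0[X]$) is exactly the paper's argument, and your treatment of the Gaussian case --- eigendecomposition of $C_S$ with Haar eigenvectors independent of the eigenvalues, so that $\Exs\bigl[(v_i^\top\Delta_0)^2\bigr]=\frac{1}{d}\Exs\|\Delta_0\|^2$ and the quadratic form collapses to a normalized trace --- is also the paper's proof.

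The gap is in the SRHT case. You correctly reduce the problem to showing $\frac{1}{d}\Exs\trace\bigl(p_t^2(C_S^{-1})\,\widetilde G\bigr)\to\int p_t^2(\lambda^{-1})\,\mathrm{d}\mu(\lambda)$, where $\widetilde G$ is the normalized second-moment matrix of $\Delta_0$, but you then dispose of it by ``citing the delocalization properties established for these ensembles.'' No off-the-shelf statement of that kind gives the exact decoupling against a fixed anisotropic $\widetilde G$; this is precisely the technical heart of the lemma, and the paper spends four appendix lemmas on it. Concretely, the paper (i) reduces the inverse moments $\trace(C_S^{-2k}\widetilde G)$ to positive moments via a Neumann-series argument using that the spectrum of $C_S$ lies in $(0,1)$ almost surely; (ii) exploits the uniformly random permutation $P$ built into its definition of the SRHT to prove the distributional identity $U^\top W^\top H B H W U \overset{\mathrm{d}}{=} U^\top W^\top H W\, B\, W^\top H W U$, which puts $C_S$ into the conjugated form required for asymptotic liberation; (iii) applies Anderson--Farrell's result for the liberating family $\{W, HW\}$ to obtain asymptotic freeness of $\{WU\widetilde G U^\top W^\top,\, WUU^\top W^\top\}$ from $\{HWBW^\top H\}$, using the bounded-condition-number assumption to guarantee compactly supported limiting spectra; and (iv) runs a combinatorial induction exploiting idempotency ($X_2^2=X_2$, $Y^2=Y$, $X_1X_2=X_1$) and equality of the limiting normalized traces of $X_1$ and $X_2$ to conclude that $\widetilde G$ can be replaced by $UU^\top$ inside all mixed moments. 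Mere eigenvector delocalization of $C_S$ would not by itself yield the exact limit against a general $\widetilde G$ (it at best controls individual entries, not the full trace functional), and the freeness statement you would want to invoke is not generic for Hadamard-based sketches --- it hinges on the random permutation in the paper's variant of the SRHT. So the step you yourself flag as the ``main obstacle'' is exactly where your proposal is missing the argument.
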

Thus, the best achievable error is lower bounded by the infimum of the following variational problem,
\begin{align}
\label{EqnPolynomialOptimizationProblem}
    \mathcal{L}^*_{\mu,t} \defn \min_{p \in \real^0_t[X]} F_\mu(p) \,,
\end{align}
where $F_\mu(p)\! \defn \!\int p^2\!\left(\lambda^{-1}\right) \, \mathrm{d}\mu(\lambda)$. Using the change of variable $x\!=\!1/\lambda$ and setting $\mathrm{d}\nu(x)=x^{-1}\mathrm{d}\mu\!\left(x^{-1}\right)$, we have that $F_\mu(p) \!=\! G_\nu(p)$ where $G_\nu(p) \!\defn\! \int p^2(x) \,\frac{1}{x} \mathrm{d}\nu(x)$. The optimal polynomial can be constructed by leveraging the following result.
\begin{lemma}
\label{LemmaOptimalSolutionPolynomialOptimizationProblem}
Let $\nu$ be some measure with bounded support in $(0,+\infty)$, and suppose that $\{\Pi_t\}$ is a family of orthogonal polynomials with respect to $\nu$ such that $\mathrm{deg}(\Pi_t)=t$ and $\Pi_t(0)=1$. Then, the polynomial $\Pi_t$ is the unique solution of the optimization problem $\min G_\nu(p)$ over $p \in \real_t^0[X]$.
\end{lemma}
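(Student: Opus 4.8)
The plan is to compare an arbitrary feasible polynomial against $\Pi_t$, exploiting both the normalization $p(0)=1$ and the $\nu$-orthogonality of the family $\{\Pi_k\}$ (this is essentially the ``Galerkin orthogonality'' / kernel-polynomial structure familiar from least-squares approximation). First I would fix any $p \in \real_t^0[X]$ and set $r \defn p - \Pi_t$. Since $p(0)=\Pi_t(0)=1$ we have $r(0)=0$, so $r$ factors as $r(x)=x\,s(x)$ for a polynomial $s$ with $\mathrm{deg}\,s \le t-1$ (possibly $s\equiv 0$). Note also that $G_\nu(q)<\infty$ for every polynomial $q$: since $\nu$ has bounded support contained in $(0,+\infty)$, both $1/x$ and $q$ are bounded on $\mathrm{supp}(\nu)$, which is compact.

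Next I would expand the objective, observing that the weight $1/x$ exactly absorbs the factor $x$ coming from $r=xs$:
\[
G_\nu(p) = \int \bigl(\Pi_t(x)+x\,s(x)\bigr)^2\,\frac{1}{x}\,\mathrm{d}\nu(x)
= G_\nu(\Pi_t) + 2\int \Pi_t(x)\,s(x)\,\mathrm{d}\nu(x) + \int x\,s(x)^2\,\mathrm{d}\nu(x).
\]
The cross term vanishes: since $\mathrm{deg}\,\Pi_k=k$ for every $k$, the polynomials $\Pi_0,\dots,\Pi_{t-1}$ form a basis of the space of polynomials of degree at most $t-1$, so $s$ is a linear combination of them, each of which is $\nu$-orthogonal to $\Pi_t$; hence $\int \Pi_t\,s\,\mathrm{d}\nu=0$. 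The remaining term is nonnegative because $x>0$ on $\mathrm{supp}(\nu)$ and $s^2\ge 0$. Therefore $G_\nu(p) = G_\nu(\Pi_t) + \int x\,s(x)^2\,\mathrm{d}\nu(x) \ge G_\nu(\Pi_t)$, which shows that $\Pi_t$ attains the minimum.

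Finally, for uniqueness I would examine the equality case: $G_\nu(p)=G_\nu(\Pi_t)$ forces $\int x\,s(x)^2\,\mathrm{d}\nu(x)=0$, hence $s$ vanishes $\nu$-almost everywhere, i.e.\ on $\mathrm{supp}(\nu)$. The very existence of the family $\Pi_0,\dots,\Pi_t$ with $\mathrm{deg}\,\Pi_k=k$ forces $\mathrm{supp}(\nu)$ to contain at least $t+1$ points; since $\mathrm{deg}\,s\le t-1<t+1$, a polynomial of that degree vanishing on $\mathrm{supp}(\nu)$ must be identically zero, so $r\equiv 0$ and $p=\Pi_t$. I do not expect a genuine obstacle here; the only mildly delicate point is this last step — arguing that $\mathrm{supp}(\nu)$ is rich enough, which is implied by the hypothesis itself — together with the routine check that all the integrals are finite because $1/x$ is bounded on the (bounded) support of $\nu$.
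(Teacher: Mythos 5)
Your proposal is correct and follows essentially the same route as the paper: write $p-\Pi_t = x\,s(x)$, expand $G_\nu(p)$, kill the cross term by orthogonality of $\Pi_t$ to $\real_{t-1}[X]$, and conclude from nonnegativity of $\int x\,s^2\,\mathrm{d}\nu$. Your uniqueness step is in fact slightly more careful than the paper's (which asserts directly that the last term vanishes only if $s=0$), since you justify that $\mathrm{supp}(\nu)$ contains enough points to force $s\equiv 0$ as a polynomial.
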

\begin{proof}
Let $p \!\in\! \real^0_t[X]$. Since $\Pi_t(0)=1$, the polynomial $(p-\Pi_t)$ has a root at $0$. Hence, $(p-\Pi_t)(x) = x Q(x)$ with $Q\!\in\!\real_{t-1}[X]$. Then,
\begin{align*}
    G_\nu(p) &= \int p^2(x) x^{-1} \mathrm{d}\nu(x)\\
    & = \int \Pi_t^2(x) x^{-1} \mathrm{d}\nu(x) + 2 \int \Pi_t Q(x) \mathrm{d}\nu(x)\\
    & \quad + \int x Q^2(x) \mathrm{d}\nu(x)\,.   
\end{align*}
The cross-term is equal to $0$ since $Q$ in the span of $\Pi_0, \dots, \Pi_{t-1}$, which are orthogonal to $\Pi_t$. The third term is non-negative, and equal to $0$ if and only if that $Q\!=\!0$. Therefore, the unique solution to~\eqref{EqnPolynomialOptimizationProblem} is $\Pi_t$.
\end{proof}
Based on such an orthogonal family $\{\Pi_t\}$, we aim to derive a first-order method which achieves the lower bound $\mathcal{L}^*_{\mu,t}$. We recall a standard result, that is, for such a family of polynomials $\{\Pi_t\}$, there exist sequences $\{a_t\}$ and $\{b_t\}$ such that $\Pi_0(x)=1$, $\Pi_1(x) = 1+b_1 x$ and for any $t \gre 2$,
\begin{align}
\label{EqnGenericThreeTermsRecursion}
    \Pi_t(x) = (a_t + b_t x) \Pi_{t-1}(x) + (1-a_t) \Pi_{t-2}(x)\,.
\end{align}
Then we can construct an optimal first-order method according to the following result, which is inspired by the work of~\citet{pedregosaorthogonal19} and whose proof is deferred to Appendix~\ref{ProofTheoremGeneralOptimalFirstOrderMethod}. 
\begin{theorem}
\label{TheoremGeneralOptimalFirstOrderMethod}
Given $x_0 \in \real^d$, set $x_1 = x_0 + b_1 H_S^{-1} \nabla f(x_0)$, and for $t \gre 2$,
\begin{align}
\label{EqnGenericOptimalUpdate}
    x_t = x_{t-1} + b_t H_S^{-1} \nabla f(x_{t-1}) + (1-a_t) (x_{t-2}-x_{t-1})\,.
\end{align}
Then, the sequences of iterates $\{x_t\}$ is asymptotically optimal, i.e.,
\begin{align}
    \lim_{n \to \infty} \frac{\Exs{\|\Delta_t\|}^2}{\Exs{\|\Delta_0\|}^2} = \mathcal{L}^*_{\mu,t}\,.
\end{align}
\end{theorem}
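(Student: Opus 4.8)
The plan is to show that the iteration~\eqref{EqnGenericOptimalUpdate} produces exactly the iterates whose error polynomials are the orthogonal polynomials $\{\Pi_t\}$, and then invoke Lemma~\ref{LemmaPolynomialRecursion} together with Lemma~\ref{LemmaOptimalSolutionPolynomialOptimizationProblem}. By Lemma~\ref{LemmaPolynomialRecursion}, for any first-order method of the form~\eqref{EqnFirstOrder} there is a polynomial $p_t \in \real^0_t[X]$ with $\Delta_t = p_t(C_S^{-1}) \Delta_0$, and $\lim_{n\to\infty} \Exs[\|\Delta_t\|^2]/\Exs[\|\Delta_0\|^2] = \int p_t^2(\lambda^{-1})\,\mathrm{d}\mu(\lambda) = F_\mu(p_t) = G_\nu(p_t)$. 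So it suffices to prove that the particular update~\eqref{EqnGenericOptimalUpdate} yields $p_t = \Pi_t$ for every $t$, since then the limiting error equals $G_\nu(\Pi_t) = \mathcal{L}^*_{\mu,t}$ by Lemma~\ref{LemmaOptimalSolutionPolynomialOptimizationProblem} (noting that the change of variables identifies $\mathcal{L}^*_{\mu,t}$ with $\min_{p\in\real^0_t[X]} G_\nu(p)$, attained uniquely at $\Pi_t$).

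The core computation is to translate the recursion~\eqref{EqnGenericOptimalUpdate} on the $x_t$ into a recursion on the error variables $\Delta_t$. First I would record the key algebraic identity: since $\nabla f(x) = A^\top(Ax - b) = A^\top A(x - \xstar) = H(x-\xstar)$, we have $H_S^{-1}\nabla f(x_{t-1}) = H_S^{-1} H (x_{t-1}-\xstar)$. Writing $A = U\Sigma V^\top$ in reduced SVD form and recalling $\Delta_t = U^\top A(x_t-\xstar) = \Sigma V^\top(x_t-\xstar)$, together with $H_S = A^\top S^\top S A$ and $C_S = U^\top S^\top S U$, one checks that $\Sigma V^\top H_S^{-1} H (x_{t-1}-\xstar) = C_S^{-1}\Delta_{t-1}$. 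Applying $\Sigma V^\top$ to~\eqref{EqnGenericOptimalUpdate} then gives, for $t\ge 2$,
\begin{align}
\Delta_t = \Delta_{t-1} + b_t\, C_S^{-1}\Delta_{t-1} + (1-a_t)(\Delta_{t-2}-\Delta_{t-1}) = \big(a_t + b_t C_S^{-1}\big)\Delta_{t-1} + (1-a_t)\Delta_{t-2}\,,
\end{align}
and similarly $\Delta_1 = (I + b_1 C_S^{-1})\Delta_0$, while $\Delta_0 = \Delta_0$. Matching these against the three-term recursion~\eqref{EqnGenericThreeTermsRecursion} with $x$ replaced by $C_S^{-1}$, and against $\Pi_0(x)=1$, $\Pi_1(x)=1+b_1 x$, an immediate induction on $t$ shows $\Delta_t = \Pi_t(C_S^{-1})\Delta_0$, i.e. $p_t = \Pi_t$.

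Combining the pieces: $\lim_{n\to\infty}\Exs[\|\Delta_t\|^2]/\Exs[\|\Delta_0\|^2] = G_\nu(\Pi_t)$, and by Lemma~\ref{LemmaOptimalSolutionPolynomialOptimizationProblem} this equals $\min_{p\in\real^0_t[X]} G_\nu(p) = \min_{p\in\real^0_t[X]} F_\mu(p) = \mathcal{L}^*_{\mu,t}$, which is the claim. The main obstacle is the deterministic linear-algebra identity $\Sigma V^\top H_S^{-1}\nabla f(x_{t-1}) = C_S^{-1}\Delta_{t-1}$: one must be careful that $H_S = A^\top S^\top S A$ is invertible (which holds since $\mathrm{rank}(A)=d$ and $C_S$ is almost surely invertible in the regime considered) and push the substitution $A = U\Sigma V^\top$ through cleanly, since $V^\top$ and $\Sigma$ do not commute with $S^\top S$. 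Everything after that identity is a routine induction, and the limiting-error formula and optimality are quoted directly from Lemmas~\ref{LemmaPolynomialRecursion} and~\ref{LemmaOptimalSolutionPolynomialOptimizationProblem}.
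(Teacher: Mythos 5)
Your proof is correct and follows essentially the same route as the paper: translate the update into the error recursion $\Delta_t = (a_t + b_t C_S^{-1})\Delta_{t-1} + (1-a_t)\Delta_{t-2}$, conclude by induction that $\Delta_t = \Pi_t\!\left(C_S^{-1}\right)\Delta_0$, and then invoke Lemma~\ref{LemmaPolynomialRecursion} together with the optimality of $\Pi_t$ from Lemma~\ref{LemmaOptimalSolutionPolynomialOptimizationProblem}. The only difference is that you spell out the SVD identity $U^\top A\, H_S^{-1}\nabla f(x_{t-1}) = C_S^{-1}\Delta_{t-1}$, which the paper uses implicitly (it already underlies the proof of Lemma~\ref{LemmaPolynomialRecursion}); that added detail is welcome but does not change the argument.
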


Consequently, a strategy to find the optimal first-order method proceeds as follows. First, we characterize the l.s.d.~$\mu$ of the matrix $C_S$, and we find the polynomial $\Pi_t \in \real_t^0[X]$ which achieves the lower bound $\mathcal{L}^*_{\mu,t}$. Then, according to Theorem~\ref{TheoremGeneralOptimalFirstOrderMethod}, we build from the three-terms recursion~\eqref{EqnGenericThreeTermsRecursion} of the orthogonal polynomials $\{\Pi_t\}$ a first-order method which yields an asymptotically optimal sequence of iterates $\{x_t\}$. Our analysis of the Gaussian case is based on standard random matrix theory results, that we recall in details as we leverage them for the analysis of the SRHT case. For the latter, most technicalities actually lie in characterizing the l.s.d.~$\mu$ of $C_S$, and in constructing an orthogonal basis of polynomials for the distribution $\mathrm{d}\nu(x) \!=\! x^{-1} \mathrm{d}\mu\!\left(x^{-1}\right)$.

\subsection{The Gaussian case}
\label{SectionGaussianCase}

Consider an $m \times n$ matrix $S$ with i.i.d.~entries $\mathcal{N}\!\left(0,m^{-1}\right)$. The l.s.d.~of $C_S$ is the Marchenko-Pastur law with density $\mu_{\rho}$ given in~\eqref{EqnMarchenkoPasturDensity}. Denote by $a\!=\!(1\!-\!\sqrt{\rho})^2$ and $b\!=\!(1\!+\!\sqrt{\rho})^2$ the edge eigenvalues. Let $\{\Delta_t\}$ be the sequence of error vectors generated by a first-order method as in~\eqref{EqnFirstOrder}. According to Lemma~\ref{LemmaPolynomialRecursion}, there exists a sequence of polynomials $p_t \in \real_t^0[X]$ such that $\Delta_t = p_t\!\left(C_S^{-1}\right) \Delta_0$, and
\begin{align}
\label{EqnMPLimit}
    \lim_{n \to \infty} \, \frac{\Exs{\|\Delta_t\|}^2}{\Exs{\|\Delta_0\|}^2} = \int_{a}^{b} p_t^2(\lambda^{-1}) \mu_\rho(\lambda) \mathrm{d}\lambda\,,
\end{align}
\begin{lemma}
\label{LemmaGenericConvergenceRateGaussian}
Under the above assumptions and notations, and setting $P_t(x) \!=\! p_t\!\left(\frac{x}{(1-\rho)^2}\right)$, we have
\begin{align}
    \lim_{n \to \infty} \, \frac{\Exs{\|\Delta_t\|}^2}{\Exs{\|\Delta_0\|}^2} \,=\, (1-\rho) \int_a^b P_t^2(x) \frac{1}{x}\mu_{\rho}(x) \, \mathrm{d}x\,.
\end{align}
Consequently, if $\{\Pi_t\}$ is an orthogonal basis of polynomials with respect to $\mu_{\rho}$ such that $\mathrm{deg}(\Pi_t)=t$ and $\Pi_t(0)=1$ then $\overline{\Pi}_t(x) \defn \Pi_t\!\left((1-\rho)^2 x\right)$ achieves the lower bound $\mathcal{L}_{\mu_\rho,t}^*$.
\end{lemma}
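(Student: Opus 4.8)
The plan is to reduce the claim about $P_t$ to the orthogonal-polynomial optimization Lemma~\ref{LemmaOptimalSolutionPolynomialOptimizationProblem} via the change of variables announced just before it. Starting from~\eqref{EqnMPLimit}, I would first perform the substitution $x = 1/\lambda$, which turns $\mathrm{d}\mu_\rho(\lambda)$ into a measure on $[1/b,1/a]$. A direct computation using the explicit form~\eqref{EqnMarchenkoPasturDensity} shows that the Marchenko-Pastur law is, up to the Jacobian, essentially self-reciprocal: one checks that $x^{-1}\mu_\rho(x^{-1})\,\mathrm{d}x$ is again (a rescaling of) a Marchenko-Pastur density. The cleanest way is to note that the edges transform as $a \mapsto 1/b$, $b\mapsto 1/a$ and that $(1-\rho)^2 = a b$, so the affine rescaling $x \mapsto (1-\rho)^2 x = ab\,x$ sends $[1/b,1/a]$ back to $[a,b]$; tracking the density through this rescaling produces exactly the factor $(1-\rho)$ and the weight $\frac{1}{x}\mu_\rho(x)$ claimed in the statement. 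This is the only genuinely computational step, and it is the step I expect to be the main obstacle: one must carefully propagate the two Jacobians (from $x=1/\lambda$ and from the affine rescaling) and verify the normalizing constant, using the identities $\sqrt{\rho}\cdot\sqrt{a b}=\sqrt{\rho}(1-\rho)$ and $a+b = 2(1+\rho)$, $b-a = 4\sqrt{\rho}$.

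Concretely, after the substitution $x = 1/\lambda$ the integrand $p_t^2(\lambda^{-1})$ becomes $p_t^2(x)$, and I would then write $p_t(x) = p_t\big(\frac{(1-\rho)^2 x}{(1-\rho)^2}\big) = P_t\big((1-\rho)^2 x\big)^{\text{(read backwards)}}$, i.e. set $y = (1-\rho)^2 x$ so that $P_t(y) = p_t(y/(1-\rho)^2)$ matches the definition in the Lemma. Substituting $y = (1-\rho)^2 x$ into the reciprocal Marchenko-Pastur integral and simplifying the density, the edges of integration in $y$ become $a$ and $b$ again, and the measure collapses to $(1-\rho)\,\frac{1}{y}\mu_\rho(y)\,\mathrm{d}y$. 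This yields exactly
\begin{align*}
    \lim_{n\to\infty}\frac{\Exs{\|\Delta_t\|}^2}{\Exs{\|\Delta_0\|}^2} = (1-\rho)\int_a^b P_t^2(y)\,\frac{1}{y}\,\mu_\rho(y)\,\mathrm{d}y\,,
\end{align*}
which is the first assertion (after renaming $y$ to $x$).

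For the second assertion, I would apply Lemma~\ref{LemmaOptimalSolutionPolynomialOptimizationProblem} with $\nu = \mu_\rho$ restricted to $[a,b]$, which has bounded support in $(0,+\infty)$ since $a = (1-\sqrt\rho)^2 > 0$. The functional being minimized over $\real_t^0[X]$ is precisely $G_{\mu_\rho}(P) = \int_a^b P^2(x)\,\frac{1}{x}\,\mu_\rho(x)\,\mathrm{d}x$, up to the harmless positive constant $(1-\rho)$, so the minimizer over $P \in \real_t^0[X]$ is the orthogonal polynomial $\Pi_t$ of $\mu_\rho$ with $\Pi_t(0)=1$. The remaining point is a bookkeeping check on the constraint sets: $p_t \in \real_t^0[X]$ if and only if $P_t(x) = p_t(x/(1-\rho)^2) \in \real_t^0[X]$, since affine rescaling preserves degree and preserves the value at $0$; hence ranging $p_t$ over all admissible first-order-method polynomials is the same as ranging $P_t$ over $\real_t^0[X]$. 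Therefore the infimum $\mathcal{L}^*_{\mu_\rho,t}$ is attained exactly when $P_t = \Pi_t$, i.e. when $p_t(x) = \Pi_t((1-\rho)^2 x) = \overline{\Pi}_t(x)$, which is the claim.
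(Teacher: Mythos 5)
Your proposal is correct and follows essentially the same route as the paper: your two substitutions $x=1/\lambda$ followed by $y=(1-\rho)^2x$ compose to the paper's single change of variable $x=(1-\rho)^2/\lambda$ (with the key identity $ab=(1-\rho)^2$ doing the work), after which Lemma~\ref{LemmaOptimalSolutionPolynomialOptimizationProblem} is applied with $\nu=\mu_\rho$ exactly as in the paper. The Jacobian bookkeeping you flag as the main obstacle does go through and yields the factor $(1-\rho)$ and weight $x^{-1}\mu_\rho(x)$ as you claim.
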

\begin{proof}
Using the change of variable $x=(1-\rho)^2/\lambda$, a simple calculation yields that $p^2_t(\lambda^{-1}) \mu_\rho(\lambda)\mathrm{d}\lambda = (1-\rho) P_t^2(x) \frac{1}{x} \mu_\rho(x)\mathrm{d}x$. Applying Lemma~\ref{LemmaOptimalSolutionPolynomialOptimizationProblem} with $\nu=\mu_\rho$, we get that the optimal polynomial $P_t$ is equal to $\Pi_t$, and thus, $p_t$ is exactly $\overline{\Pi}_t(x)$.
\end{proof} 
The Marchenko-Pastur law $\mu_\rho$ is well-studied, and such a construction of polynomials is classical. In this section, we provide a definition by recursion, which is enough to state the optimal algorithm. However, for the proof of the next results, we will consider an alternative construction, from which we establish several intermediate properties useful to the analysis. Define $\Pi_0(x)=1$, $\Pi_1(x) = 1-x$, and for $t \gre 2$,
\begin{align}
\label{EqnRecurrenceOrthogonalPolynomialsGaussian}
    \Pi_t(x) = (1+\rho-x) \Pi_{t-1}(x) - \rho\,\Pi_{t-2}(x)\,.
\end{align}
\begin{lemma}
\label{LemmaOrthogonalFamilyGaussian}
The family of polynomials $\{\Pi_t\}$ is orthogonal with respect to $\mu_\rho$. Further, we have $\Pi_t(0)=1$ and $\mathrm{deg}(\Pi_t)=t$ for all $t \gre 0$.
\end{lemma}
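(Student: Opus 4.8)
The plan is to verify the three claimed properties of the recursively-defined family $\{\Pi_t\}$ in~\eqref{EqnRecurrenceOrthogonalPolynomialsGaussian} directly. The degree and normalization claims are the easy part: by induction, $\deg(\Pi_{t-1}) = t-1$ and $\deg(\Pi_{t-2}) = t-2$, so the term $(1+\rho-x)\Pi_{t-1}(x)$ contributes a leading term $-x^t$ and nothing in the recursion cancels it, giving $\deg(\Pi_t) = t$; the base cases $\deg(\Pi_0)=0$, $\deg(\Pi_1)=1$ hold. Similarly, evaluating at $x=0$ gives $\Pi_t(0) = (1+\rho)\Pi_{t-1}(0) - \rho\,\Pi_{t-2}(0)$, and since $\Pi_0(0)=\Pi_1(0)=1$, induction yields $\Pi_t(0)=1$ for all $t$ (the recursion has $1$ as a fixed point of that scalar iteration).

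For orthogonality, I would introduce the moment/Cauchy-transform machinery of the Marchenko--Pastur law rather than attempting direct integration. Concretely, recall that the Stieltjes transform $s(z) = \int (\lambda - z)^{-1}\,\mu_\rho(\lambda)\,\mathrm{d}\lambda$ of $\mu_\rho$ satisfies the quadratic equation $\rho z\, s(z)^2 + (z + \rho - 1) s(z) + 1 = 0$; equivalently, the generating function of the moments $m_k = \int \lambda^k \mu_\rho(\lambda)\,\mathrm{d}\lambda$ is algebraic, and these moments are the Narayana-type polynomials in $\rho$. The coefficients in~\eqref{EqnRecurrenceOrthogonalPolynomialsGaussian} — namely $\alpha_t = 1+\rho$ (the diagonal recurrence coefficient) and $\beta_t = \rho$ (the off-diagonal one, constant for $t\ge 2$, with the degenerate first step reflecting $\Pi_1(x)=1-x$) — are exactly the Jacobi parameters of the orthogonal polynomials for $\mu_\rho$, up to the standard affine normalization that trades monic polynomials for the $\Pi_t(0)=1$ normalization. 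So the cleanest route is: let $\{Q_t\}$ be the monic orthogonal polynomials for $\mu_\rho$, which by the general theory satisfy a three-term recursion $Q_t(x) = (x - \alpha_t) Q_{t-1}(x) - \beta_t Q_{t-2}(x)$ with $\alpha_t, \beta_t$ computable from the moments; show $\alpha_t = 1+\rho$ for $t \ge 1$ and $\beta_t = \rho$ for $t \ge 2$ (with the boundary adjustments); then set $\Pi_t(x) = Q_t(x)/Q_t(0)$ and check that dividing the monic recursion by $Q_t(0)$, together with the identity $Q_t(0) = (-\alpha_t)Q_{t-1}(0) - \beta_t Q_{t-2}(0)$ evaluated at $x=0$, reproduces exactly~\eqref{EqnRecurrenceOrthogonalPolynomialsGaussian}. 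Orthogonality of $\{\Pi_t\}$ is then immediate from that of $\{Q_t\}$ since normalization by a nonzero constant does not affect it (one must note $Q_t(0) \ne 0$, which holds because $0$ lies strictly below the support edge $a = (1-\sqrt\rho)^2 > 0$ for $\rho \in (0,1)$, so $Q_t$, having all its roots in the support, cannot vanish at $0$).

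The main obstacle is the identification of the Jacobi parameters $\alpha_t = 1+\rho$, $\beta_t = \rho$: this requires either knowing the explicit orthogonal polynomials of the Marchenko--Pastur law (they are essentially rescaled Jacobi/Laguerre-type polynomials, and their recurrence coefficients are tabulated in the random-matrix literature) or deriving the recurrence coefficients from the Stieltjes-transform functional equation via the continued-fraction expansion $s(z) = \cfrac{1}{-z + \alpha_1 + \cfrac{\beta_2}{-z + \alpha_2 + \cdots}}$ and matching it against the algebraic solution of $\rho z s^2 + (z+\rho-1)s + 1 = 0$. Because that quadratic has constant coefficients in the sense relevant here, the continued fraction is eventually periodic with period one, which is precisely why $\alpha_t$ and $\beta_t$ stabilize; working this out is the one genuinely computational step. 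An alternative, more self-contained obstacle-avoiding route is to prove orthogonality by strong induction directly: assume $\int \Pi_j \Pi_k \,\mathrm{d}\mu_\rho = 0$ for all $j \ne k$ with $j,k \le t-1$, and show $\int \Pi_t \Pi_k\,\mathrm{d}\mu_\rho = 0$ for $k \le t-1$ by substituting the recursion for $\Pi_t$ and reducing to the single nontrivial moment identity $\int x\,\Pi_{t-1}^2\,\mathrm{d}\mu_\rho = (1+\rho)\int \Pi_{t-1}^2\,\mathrm{d}\mu_\rho$ together with $\int \Pi_{t-1}^2\,\mathrm{d}\mu_\rho = \rho\int\Pi_{t-2}^2\,\mathrm{d}\mu_\rho$; these two scalar identities are exactly the Jacobi-parameter statements in disguise, and can be checked using the first few moments of $\mu_\rho$ ($m_0=1$, $m_1=1$, $m_2 = 1+\rho$) plus an induction on the normalization constants. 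I would present the monic-polynomial route as the main argument, since it most transparently explains where the constants $1+\rho$ and $\rho$ come from.
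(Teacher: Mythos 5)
Your argument is correct in substance but follows a genuinely different route from the paper. You work directly with the monic orthogonal polynomials $Q_t$ of $\mu_\rho$, identify their Jacobi parameters (constant after the first step, because $\mu_\rho$ is the free-Poisson/Marchenko--Pastur law, equivalently because the tail of the continued fraction for the Stieltjes transform satisfies the fixed quadratic $\rho z s^2+(z+\rho-1)s+1=0$), and then renormalize at $0$; orthogonality of $\{\Pi_t\}$ and the identification with the recursion \eqref{EqnRecurrenceOrthogonalPolynomialsGaussian} follow. The paper instead builds the family explicitly from shifted Chebyshev polynomials of the second kind $Q_k$, orthonormal for the weight $x\,\mu_\rho(x)\,\mathrm{d}x$: it shows $T_k(x)=xQ_{k-1}(x)$ is orthonormal for $\lambda_\rho(x)=x^{-1}\mu_\rho(x)$, computes $\mu_\rho[Q_k]=(-1)^k\sqrt{\rho}^k$, recognizes $\widehat\Pi_k$ as the Gram--Schmidt projection of the constant $1$ onto the complement of $\mathrm{span}\{T_1,\dots,T_k\}$, transfers orthogonality to $\mu_\rho$ via the variational characterization of Lemma~\ref{LemmaOptimalSolutionPolynomialOptimizationProblem}, and finally matches the three-term recursion. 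Your route explains more transparently where the constants $1+\rho$ and $\rho$ come from; the paper's explicit construction is heavier but is reused verbatim (the identities $\lambda_\rho[T_j]=(-1)^{j-1}\sqrt{\rho}^{j-1}$ and $\lambda_\rho[T_kT_\ell]=\delta_{k\ell}$) to compute the exact error $\mathcal{L}^*_{\mu_\rho,t}=\rho^t$ in Theorem~\ref{TheoremOptimalAlgorithmGaussian}, a byproduct your argument does not directly supply.

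Three caveats, all fixable. First, the first Jacobi coefficient is $\alpha_1=\int x\,\mathrm{d}\mu_\rho=1$, not $1+\rho$; this is precisely what gives $Q_1(x)=x-1$ and hence $\Pi_1(x)=1-x$. Your parenthetical about boundary adjustments gestures at this, but as written ($\alpha_t=1+\rho$ for $t\gre 1$) the normalized family would start from $1-x/(1+\rho)$ and would not satisfy \eqref{EqnRecurrenceOrthogonalPolynomialsGaussian}. Second, dividing the monic recursion by $Q_t(0)$ only reproduces the fixed-coefficient recursion after you verify $Q_{t-1}(0)/Q_t(0)=-1$; this requires solving the scalar recursion $q_t=-(1+\rho)q_{t-1}-\rho q_{t-2}$ (roots $-1$ and $-\rho$, giving $Q_t(0)=(-1)^t$), a one-line computation you should make explicit rather than fold into the evaluation identity at $x=0$. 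Third, your ``self-contained'' alternative correctly reduces the induction to the identities $\mu_\rho[x\Pi_{t-1}^2]=(1+\rho)\mu_\rho[\Pi_{t-1}^2]$ and $\mu_\rho[\Pi_{t-1}^2]=\rho\,\mu_\rho[\Pi_{t-2}^2]$, but proving these inside the same induction is delicate (the natural expansion of $x\Pi_{t-1}$ in the basis uses the orthogonality at level $t$ that is being established), so the monic-polynomial route --- or the paper's explicit Chebyshev construction in Appendix~\ref{ProofLemmaOrthogonalFamilyGaussian} --- is the one to actually carry out.
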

\begin{proof}
We defer the proof to Section~\ref{ProofLemmaOrthogonalFamilyGaussian}.
\end{proof}
Now, set $\overline{\Pi}_t(x) = \Pi_t\!\left((1-\rho)^2x\right)$. From~\eqref{EqnRecurrenceOrthogonalPolynomialsGaussian}, we obtain that $\overline{\Pi}_0(x)=1$, $\overline{\Pi}_1(x)=1-(1-\rho)^2 x$, and for $t \gre 2$,
\begin{align}
\label{EqnRecurrenceScaledOrthogonalPolynomialsGaussian}
    \overline{\Pi}_t(x) = (1\!+\!\rho\!-\!(1\!-\!\rho)^2 x) \overline{\Pi}_{t-1}(x) - \rho\,\overline{\Pi}_{t-2}(x)\,.
\end{align}
According to Lemma~\ref{LemmaGenericConvergenceRateGaussian}, the polynomial $\overline{\Pi}_t$ achieves the lower bound $\mathcal{L}_{\mu_\rho,t}^*$. Further, we identify the recursion formula~\eqref{EqnRecurrenceScaledOrthogonalPolynomialsGaussian} with the three-terms recursion~\eqref{EqnGenericThreeTermsRecursion} by setting $b_t\!=\!-(1-\rho)^2$ for $t \gre 1$, and $a_t\!=\!1+\rho$ for $t \gre 2$. Using Theorem~\ref{TheoremGeneralOptimalFirstOrderMethod}, we immediately have the asymptotically optimal first-order method, which we present in Algorithm~\ref{AlgorithmOptimalFirstOrderGaussian} in its finite-sample approximation. 
\begin{algorithm}[H]
\caption{Optimal First-Order Method for Gaussian embeddings.}
\label{AlgorithmOptimalFirstOrderGaussian}
\begin{algorithmic}
	\STATE {\bfseries Input:} Data matrix $A \in \mathbb{R}^{n \times d}$, sketch size $m \gre d+1$, initial point $x_0 \in \real^d$ and (finite-sample) ratio $\rho \defn d/m$.
	\STATE Sample $S \in \real^{m \times n}$ with i.i.d.~entries $\mathcal{N}(0,1/m)$.
	\STATE Compute the sketched matrix $S_A = S \cdot A$.
	\STATE Compute and cache a factorization of $H_S = S_A^\top S_A$.
	\STATE Set $x_1 = x_0 - (1-\rho)^2 H_S^{-1} \cdot A^\top (Ax_0 - b)$.\\
	\FOR{$t= 2$ {\bfseries to} $T$}
	    \STATE Compute the gradient $g_{t-1} = A^\top (Ax_{t-1} - b)$.
	    \STATE Perform the update
		\begin{align}
		\label{EqnOptimalAlgorithmGaussian}
		    x_t = x_{t-1} + \rho (x_{t-1}-x_{t-2}) - (1-\rho)^2 \cdot H_S^{-1} g_t\,.
		\end{align}
	\ENDFOR
	\STATE Return the last iterate $\mathbf{x_T}$.
\end{algorithmic}
\end{algorithm}
Surprisingly, up to the initialization of the first iterate $x_1$, Algorithm~\ref{AlgorithmOptimalFirstOrderGaussian} corresponds exactly to the Heavy-ball method~\eqref{EqnIHSUpdate} using the fixed step size $\mu=(1-\rho)^2$ and the fixed momentum parameter $\beta=\rho$, which was obtained in~\cite{ozaslan2019iterative, lacotte2019faster} based on edge eigenvalues analysis. Hence, in the Gaussian case, leveraging the whole shape of the limiting distribution, as opposed to using only the edge eigenvalues, yields the same algorithm. We complete the analysis of the Gaussian case by providing the exact asymptotic error $\mathcal{L}^*_{\mu_\rho,t}$.
\begin{theorem}
\label{TheoremOptimalAlgorithmGaussian}
The sequence of iterates $\{x_t\}$ given by Algorithm~\ref{AlgorithmOptimalFirstOrderGaussian} is asymptotically optimal within the class of first order algorithms as in~\eqref{EqnFirstOrder}, and the optimal error is given by $\mathcal{L}_{\mu_\rho,t}^* = \rho^t$.
\end{theorem}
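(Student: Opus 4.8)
The plan is to compute the integral $\mathcal{L}^*_{\mu_\rho,t} = \int_a^b \overline{\Pi}_t^2(\lambda^{-1})\,\mu_\rho(\lambda)\,\mathrm{d}\lambda$ directly, using the change of variables and the recursion already established. By Lemma~\ref{LemmaGenericConvergenceRateGaussian}, this equals $(1-\rho)\int_a^b \Pi_t^2(x)\,\tfrac{1}{x}\,\mu_\rho(x)\,\mathrm{d}x$, so it suffices to evaluate the weighted $L^2$-norm $\|\Pi_t\|_\mu^2 = \int_a^b \Pi_t^2(x)\,\mu_\rho(x)\,\mathrm{d}x$ together with a correction coming from the $1/x$ factor; equivalently, I would work with the measure $\mathrm{d}\nu$ and track the normalization constants of the monic (or $\Pi_t(0)=1$ normalized) orthogonal polynomials through the three-term recursion~\eqref{EqnRecurrenceOrthogonalPolynomialsGaussian}.

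The key computational steps: first, recall that the Marchenko--Pastur law $\mu_\rho$ has, up to scaling and affine reparametrization, Chebyshev-type orthogonal polynomials, so the $\Pi_t$ defined by~\eqref{EqnRecurrenceOrthogonalPolynomialsGaussian} are (after the substitution $x = 1+\rho - 2\sqrt{\rho}\cos\theta$) essentially Chebyshev polynomials of the second kind times a power of $\sqrt{\rho}$; this immediately gives $\|\Pi_t\|_\mu^2 \asymp \rho^t$ with an explicit constant. Second, I would use the standard identity relating $\int \Pi_t^2 \tfrac{1}{x}\mathrm{d}\mu$ to the squared leading behavior at $0$: since $\Pi_t(0)=1$ and $\{\Pi_k\}$ is orthogonal, expanding the function $x\mapsto$ (something) in the basis and using $\int \Pi_t \cdot x^{-1}\Pi_j \,\mathrm{d}\mu = 0$ for $j<t-1$ reduces the $1/x$-weighted integral to a single term. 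Concretely, the cleanest route: $G_\nu(\Pi_t) = \int \Pi_t^2(x)\,x^{-1}\,\mathrm{d}\nu(x)$, and by Lemma~\ref{LemmaOptimalSolutionPolynomialOptimizationProblem} this is the minimum; one then evaluates it via the Christoffel--Darboux formula or by noting $\Pi_t(x)/x$ has a partial-fraction / telescoping structure against the recursion. I expect the answer to collapse to exactly $\rho^t$ after the constants from $(1-\rho)$, the MP normalization, and the Chebyshev norms cancel.

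Alternatively — and this may be the slicker path — I would avoid integral computation entirely and argue by a recursion on $\mathcal{L}^*_{\mu_\rho,t}$ itself. From the three-term recursion for $\overline{\Pi}_t$ in~\eqref{EqnRecurrenceScaledOrthogonalPolynomialsGaussian} and orthogonality, one derives a scalar recursion for $e_t \defn \mathcal{L}^*_{\mu_\rho,t}$; using $\overline{\Pi}_t = (1+\rho - (1-\rho)^2 x)\overline{\Pi}_{t-1} - \rho\,\overline{\Pi}_{t-2}$ inside $F_{\mu_\rho}$ and exploiting that cross terms vanish, one should get $e_t = \rho\, e_{t-1}$ (the momentum coefficient $\rho$ reappearing squared somewhere but halved by a symmetry), with base cases $e_0 = 1$ and $e_1 = \rho$ checked by a direct one-line integral against $\mu_\rho$. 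That yields $e_t = \rho^t$ by induction.

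The main obstacle I anticipate is bookkeeping the precise constants: getting the Chebyshev normalization $\int_a^b U_t(\cdot)^2 \mu_\rho = $ const, the Jacobian factor from $x\leftrightarrow 1/\lambda$, the $(1-\rho)$ prefactor, and the $\Pi_t(0)=1$ rescaling all to multiply out to a clean $\rho^t$ with no stray $\rho$-dependent constant; a sign or a factor of $2$ is easy to misplace. Verifying the base case $\mathcal{L}^*_{\mu_\rho,1} = \min_{b}\int_a^b (1+b\lambda^{-1})^2\mu_\rho(\lambda)\,\mathrm{d}\lambda = \rho$ explicitly (which requires the moments $\int \lambda^{-1}\mathrm{d}\mu_\rho = (1-\rho)^{-1}$ and $\int \lambda^{-2}\mathrm{d}\mu_\rho = (1-\rho)^{-3}$, standard for Marchenko--Pastur) is a good sanity anchor and pins down all constants at once.
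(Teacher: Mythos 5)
Your reduction via Lemma~\ref{LemmaGenericConvergenceRateGaussian} to the $1/x$-weighted norm of $\Pi_t$, your Marchenko--Pastur inverse moments $\int\lambda^{-1}\mathrm{d}\mu_\rho=(1-\rho)^{-1}$ and $\int\lambda^{-2}\mathrm{d}\mu_\rho=(1-\rho)^{-3}$, and your base-case check $\mathcal{L}^*_{\mu_\rho,1}=\rho$ are all correct. But the central computation is never actually carried out, and the one concrete mechanism you propose for it fails as stated. In your ``slicker path'' you claim that substituting the three-term recursion into the error functional and ``exploiting that cross terms vanish'' yields $e_t=\rho\,e_{t-1}$. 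The cross terms do \emph{not} vanish: the error functional is an inner product for the weight $\lambda_\rho(x)\defn x^{-1}\mu_\rho(x)$, whereas the $\Pi_t$ are orthogonal for $\mu_\rho$, not for $\lambda_\rho$. Indeed, writing $\Pi_s(x)/x=Q_{s-1}(x)+1/x$ with $\mathrm{deg}(Q_{s-1})=s-1$ shows $\int\Pi_t\Pi_s\,\lambda_\rho(x)\,\mathrm{d}x=\int\Pi_t\,\lambda_\rho(x)\,\mathrm{d}x$ for every $s<t$, which is nonzero (it equals $\rho^t/(1-\rho)$). So the inductive step you describe --- including the parenthetical about the momentum coefficient being ``squared but halved by a symmetry'' --- is not a derivation. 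Your first path has the same gap: the Chebyshev substitution gives you $\int\Pi_t^2\,\mathrm{d}\mu_\rho$, but the needed quantity carries the extra $1/x$, and ``Christoffel--Darboux or a telescoping structure'' is left as a gesture precisely where the work lies.

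For comparison, the paper closes this gap by working directly in the $\lambda_\rho$-geometry: it sets $T_k(x)=xQ_{k-1}(x)$ with $Q_k$ the shifted Chebyshev polynomials of the second kind (orthonormal for $x\mu_\rho(x)\mathrm{d}x$), shows $\{T_k\}$ is orthonormal for $\lambda_\rho$ with $\lambda_\rho[T_k]=(-1)^{k-1}\sqrt{\rho}^{\,k-1}$, and that $\Pi_t=1-\sum_{j=1}^t\lambda_\rho[T_j]\,T_j$ (Gram--Schmidt of the constant $1$); then $\lambda_\rho[\Pi_t^2]=\lambda_\rho[1]-\sum_{j=1}^t\lambda_\rho[T_j]^2=(1-\rho)^{-1}-\sum_{j=0}^{t-1}\rho^j=\rho^t/(1-\rho)$, and multiplying by $(1-\rho)$ gives $\rho^t$. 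Your recursion idea can be repaired, but not in the form you wrote: apply the recursion once inside $\lambda_\rho[\Pi_t^2]$, use $\lambda_\rho[x\Pi_t\Pi_{t-1}]=\mu_\rho[\Pi_t\Pi_{t-1}]=0$ together with the identity $\lambda_\rho[\Pi_t\Pi_s]=\lambda_\rho[\Pi_t]$ above to get $\lambda_\rho[\Pi_t^2]=\lambda_\rho[\Pi_t]$; then $v_t\defn\lambda_\rho[\Pi_t]$ satisfies $v_t=(1+\rho)v_{t-1}-\rho v_{t-2}$ (since $\mu_\rho[\Pi_{t-1}]=0$), with $v_0=(1-\rho)^{-1}$ and $v_1=\rho/(1-\rho)$, whence $v_t=\rho^t/(1-\rho)$. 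Either way, the missing ingredient is an explicit handle on the $1/x$-weighted inner products, which your proposal does not supply.
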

\begin{proof}
We have already argued that $\{x_t\}$ is asymptotically optimal. It remains to show that $\mathcal{L}^*_{\mu_\rho,t} = \rho^t$, whose proof is deferred to Appendix~\ref{ProofTheoremOptimalAlgorithmGaussian}.
\end{proof}

\subsection{The SRHT case}

Haar random projections have been shown to have a better performance than Gaussian embeddings in several contexts. However, they are slow to generate and apply, and we consider instead the SRHT. We recall the definition of the Stieltjes transform $m_\mu$ of a distribution $\mu$ supported on $[0,+\infty)$, which, for $z \in \mathbb{C}\setminus \real_+$, is given by $m_\mu(z) \defn \int_\real \frac{1}{x-z} \, \mathrm{d}\mu(x)$. It has been recently shown that the SRHT behaves asymptotically as Haar embeddings, as formally stated by the next result.
\begin{lemma}[Theorem~4.1 in~\citet{lacotteiterative20}]
\label{LemmaStieltjesHaar}
Let $S$ be an $m \times n$ SRHT embedding and $S_h$ be an $m \times n$ Haar embedding. Then, the matrices $C_S$ and $C_{S_h}$ have the same limiting spectral distribution $F_h$, with support included within the interval $(0,1)$ and whose Stieltjes transform $m_h$ is given by
\begin{align}
\label{EqnStieltjesHaar}
    m_h(z) = \frac{1}{2\gamma}\left(\frac{2\gamma-1}{1-z} + \frac{\xi-\gamma}{z(1-z)} - \frac{R(z)}{z(1-z)}\right)\,,
\end{align}
where
\begin{align*}
    R(z) = \sqrt{(\gamma+\xi-2+z)^2 + 4(z-1)(1-\gamma)(1-\xi)}\,,
\end{align*}
\end{lemma}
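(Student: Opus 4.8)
The plan is to establish the Stieltjes transform formula \eqref{EqnStieltjesHaar} by reducing the SRHT case to the Haar case (which is the content of Theorem~4.1 in~\citet{lacotteiterative20}, so I may invoke it), and then computing the limiting spectral distribution of $C_{S_h}$ for a Haar embedding $S_h$ directly. The key structural observation is that a truncated $m\times n$ Haar matrix $S_h$ can be realized by taking an $n\times n$ Haar-distributed orthogonal matrix $O$ and keeping $m$ of its rows, so that $S_h^\top S_h$ is an orthogonal projection onto a uniformly random $m$-dimensional subspace of $\real^n$. Then $C_{S_h} = U^\top S_h^\top S_h U$ is, up to the randomness of $U$ versus the projection, a compression of a random projection to the $d$-dimensional range of $U$. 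The nonzero eigenvalues of $C_{S_h}$ coincide (up to multiplicity bookkeeping) with those of $U U^\top S_h^\top S_h$, i.e. the product of two orthogonal projections of ranks $d$ and $\widetilde m$ in $\real^n$. The distribution of the angles between two uniformly random subspaces of fixed dimensions is classical, and the limiting spectral distribution of the product of two such projections is a Wachter-type (free multiplicative convolution of two Bernoulli) law.

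Concretely, I would proceed as follows. First, recall that $\frac1n\trace\big((zI-C_{S_h})^{-1}\big)$ converges, and that the l.s.d.\ of the product of two independent random rank-$d$ and rank-$\widetilde m$ projections in dimension $n$ is governed by free probability: the limiting measure is the free multiplicative convolution of a Bernoulli$(\gamma)$ and a Bernoulli$(\xi)$ distribution. Its Stieltjes transform $m_h$ satisfies a quadratic equation obtained from the $S$-transform / subordination relations for these two atomic measures. Second, I would write down that quadratic equation explicitly, $\gamma\, z(1-z)\, m_h(z)^2 + \big((2\gamma-1)z - (\xi-\gamma)\big) m_h(z) + (\text{lower order in } m_h)=0$, or rather set it up so that solving for $m_h$ via the quadratic formula produces exactly the claimed closed form with the branch square root $R(z)=\sqrt{(\gamma+\xi-2+z)^2+4(z-1)(1-\gamma)(1-\xi)}$. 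Third, I would verify that the correct branch of the square root is selected: $m_h$ must map $\mathbb{C}_+$ into $\mathbb{C}_+$, must behave like $-1/z$ as $|z|\to\infty$, and must be real on the complement of the support; this pins down the sign in front of $R(z)$ as in \eqref{EqnStieltjesHaar}. Finally, checking support $\subseteq (0,1)$ amounts to locating the real zeros of the discriminant inside $R(z)^2$ and confirming $m_h$ stays real and finite outside $[\lambda_{h,r},\Lambda_{h,r}]$ after the rescaling by $m/n$.

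An alternative, perhaps cleaner, route avoids free probability entirely: use the fact that for a projection onto a uniformly random $m$-subspace, the Bai--Silverstein machinery applies to $S_h S_h^\top$ written through a Haar unitary, and the l.s.d.\ of $U^\top S_h^\top S_h U$ can be extracted from the known limiting spectrum of products of projections (Wachter's theorem). Either way, the identity to prove is purely an algebraic consequence of a fixed-point/quadratic equation for $m_h$, so the bulk of the argument is (i)~citing the reduction SRHT~$\to$~Haar from~\citet{lacotteiterative20}, (ii)~citing or deriving the quadratic satisfied by $m_h$ for the product of two random projections, and (iii)~solving and choosing the branch.

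The main obstacle I anticipate is getting the quadratic equation in the exact normalization used here. The matrix in the lemma is $C_S = U^\top S^\top S U$ with $S$ having \emph{orthonormal} rows but scaled so that the relevant ratio is $\widetilde m/n\to\xi$ rather than $\widetilde m/d$; the rescaling by $n/m$ enters elsewhere (item~3 of the contributions), so one must be careful that \eqref{EqnStieltjesHaar} is for $C_S$ itself, not $\frac nm C_S$. Matching the aspect ratios $\gamma=\lim d/n$ and $\xi=\lim m/n$ correctly in the subordination equation, and then verifying the messy algebraic identity that the quadratic formula's output equals the stated $m_h(z)$ with that particular $R(z)$, is where the real bookkeeping lies. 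Since this lemma is quoted verbatim from~\citet{lacotteiterative20}, the cleanest exposition is simply to cite that theorem and, for completeness, sketch the projection-product / Wachter-law derivation with the branch-selection check, deferring the full algebra.
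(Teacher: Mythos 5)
The paper offers no proof of this lemma at all---it is imported verbatim as Theorem~4.1 of \citet{lacotteiterative20}---so your bottom-line decision to simply cite that theorem is exactly what the paper does. Your supplementary sketch (reduce SRHT to Haar via the cited result, view $C_{S_h}$ through the product of the two independent random projections $UU^\top$ and $S_h^\top S_h$, identify the limit as the free multiplicative convolution of Bernoulli$(\gamma)$ and Bernoulli$(\xi)$ laws, then solve the resulting quadratic for $m_h$ and pick the branch mapping $\mathbb{C}_+$ into $\mathbb{C}_+$ with $m_h(z)\sim -1/z$ at infinity) is a sound outline of how such a formula is established, though the quadratic's coefficients as you wrote them are hedged and the algebra is deferred, which is acceptable here only because the statement is cited rather than proved in this paper.
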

\begin{remark}
Due to the computational benefits of the SRHT over Haar projections, we state all our next results for the former, although all statements also apply to the latter (except for the time complexity results). 
\end{remark}
In order to characterize the optimal first-method with SRHT embeddings, we first derive the density of $F_h$.
\begin{theorem}
\label{TheoremHaarDensity}
The distribution $F_h$ admits the following density on $\real$,
\begin{align}
    f_h(x) = \frac{1}{2\gamma\pi} \frac{\sqrt{(\Lambda_h - x)_+(x-\lambda_h)_+}}{x (1-x)}\,,
\end{align}
where
\begin{align*}
\begin{cases}
    \lambda_h \defn \left(\sqrt{(1-\gamma)\xi}-\sqrt{(1-\xi)\gamma}\right)^2\,,\\
    \Lambda_h \defn \left(\sqrt{(1-\gamma)\xi}+\sqrt{(1-\xi)\gamma}\right)^2\,.
\end{cases}
\end{align*}
\end{theorem}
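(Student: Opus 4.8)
The plan is to recover the density $f_h$ from the Stieltjes transform $m_h$ given in Lemma~\ref{LemmaStieltjesHaar} via the Stieltjes inversion formula: for a distribution $\mu$ with Stieltjes transform $m_\mu$, the density at a point $x$ in the interior of the support is $f_\mu(x) = \frac{1}{\pi}\lim_{\varepsilon \downarrow 0} \mathrm{Im}\, m_\mu(x + i\varepsilon)$. So the computation reduces to evaluating the boundary values of~\eqref{EqnStieltjesHaar} as $z \to x \in \real$ from the upper half-plane. All the $z$-rational pieces of $m_h$ — namely $\frac{2\gamma-1}{1-z}$ and $\frac{\xi-\gamma}{z(1-z)}$ — are real and continuous for $x \in (0,1)\setminus\{0,1\}$, hence contribute nothing to the imaginary part. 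The entire imaginary part therefore comes from the branch $-\frac{R(z)}{2\gamma z(1-z)}$, and more precisely from the square root $R(z) = \sqrt{(\gamma+\xi-2+z)^2 + 4(z-1)(1-\gamma)(1-\xi)}$.

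First I would expand the radicand of $R$ as a quadratic polynomial in $z$. Writing $R(z)^2 = z^2 + \beta_1 z + \beta_0$ after collecting terms, one checks that the leading coefficient is $1$, so $R(z)^2$ factors as $(z - \lambda_h)(z - \Lambda_h)$ for its two roots; the key algebraic step is to verify that these roots are exactly $\lambda_h = \big(\sqrt{(1-\gamma)\xi} - \sqrt{(1-\xi)\gamma}\big)^2$ and $\Lambda_h = \big(\sqrt{(1-\gamma)\xi} + \sqrt{(1-\xi)\gamma}\big)^2$. This amounts to checking that $\lambda_h + \Lambda_h = 2(1-\gamma)\xi + 2(1-\xi)\gamma = -\beta_1$ (equivalently $\beta_1 = 2 - \gamma - \xi - 2(1-\gamma)(1-\xi) = \ldots$, after simplification $= -(2(1-\gamma)\xi + 2(1-\xi)\gamma)$) and $\lambda_h \Lambda_h = \big((1-\gamma)\xi - (1-\xi)\gamma\big)^2 = (\xi-\gamma)^2 = \beta_0$; both are routine identities in $\gamma,\xi$. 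It should also be noted that $0 < \lambda_h < \Lambda_h < 1$ under the standing assumptions $0 < \gamma < \xi < 1$, which locates the support inside $(0,1)$, consistently with Lemma~\ref{LemmaStieltjesHaar}.

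With $R(z)^2 = (z-\lambda_h)(z-\Lambda_h)$, for $x \in (\lambda_h, \Lambda_h)$ the product $(x-\lambda_h)(x-\Lambda_h)$ is negative, so the boundary value of $R$ from $\mathbb{C}_+$ is purely imaginary: $\lim_{\varepsilon\downarrow 0} R(x+i\varepsilon) = i\,\sqrt{(\Lambda_h - x)(x - \lambda_h)}$ (the sign of the branch being fixed by the requirement that $m_h$ map $\mathbb{C}_+$ to $\mathbb{C}_+$, i.e. $\mathrm{Im}\, m_h > 0$, which forces the $-R/(z(1-z))$ term to carry positive imaginary part for $x \in (0,1)$). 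Outside $[\lambda_h,\Lambda_h]$ the radicand is positive and $R$ is real, so the imaginary part vanishes and the support is exactly $[\lambda_h,\Lambda_h]$. Plugging into the inversion formula,
\begin{align*}
    f_h(x) = \frac{1}{\pi}\,\mathrm{Im}\left(-\frac{R(x+i0)}{2\gamma\, x(1-x)}\right) = \frac{1}{2\gamma\pi}\,\frac{\sqrt{(\Lambda_h - x)(x-\lambda_h)}}{x(1-x)}\,,
\end{align*}
which is the claimed density, and the $(\cdot)_+$ notation absorbs the vanishing of $f_h$ outside the support. A final remark worth including: one should double-check the branch/sign choice — the hardest conceptual point — either by matching $m_h(z) \sim -1/z$ as $z \to \infty$ (which pins down the sign of $R$ at infinity, hence the global branch), or by verifying $\int f_h = 1$. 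The latter integral is a standard arcsine-type computation that also serves as a consistency check on the whole derivation. The main obstacle, then, is not any single hard step but rather carefully tracking the branch of the square root through the inversion formula so that the sign of $f_h$ comes out positive; the polynomial identities for $\lambda_h,\Lambda_h$ are mechanical.
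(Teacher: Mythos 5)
Your proposal is correct and takes essentially the same approach as the paper: Stieltjes inversion of the transform in Lemma~\ref{LemmaStieltjesHaar}, observing that the rational terms have vanishing imaginary part for $x\in(0,1)$ and that the radicand of $R$ is the monic quadratic $(z-\lambda_h)(z-\Lambda_h)$ (your sum/product verification of the roots is right, though the intermediate expression you display for $\beta_1$ is off by a factor of $-2$ before you state the correct final value). The only difference is bookkeeping of the branch: you fix it via the Herglotz property $\mathrm{Im}\,m_h>0$ on $\mathbb{C}_+$ (or the $-1/z$ asymptotics), while the paper sidesteps the branch discussion by computing $\lim_{y\to 0^+}\lvert\mathrm{Im}(R(x+iy))\rvert$ explicitly from the real and imaginary parts of the radicand, which is an equivalent route.
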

\begin{proof}
The proof is essentially based on the expression~\eqref{EqnStieltjesHaar} of the Stieltjes transform $m_h$, and on the inversion formula,
\begin{align}
\label{EqnInverseStieltjesTransform}
	f_h(x) = \lim_{y \to 0^+} \frac{1}{\pi} \mathrm{Im}\left(m_h(x+iy)\right)\,,\,\, \mathrm{where}\, y \in \real_+\,.
\end{align}
which holds for any $x \in \real$ provided that the above limit exists~\citep{silverstein1995analysis}. We defer the calculations to Appendix~\ref{ProofHaarDensity}.
\end{proof}
Using the change of variable $y=x/\xi$, we can also derive the limiting density of the \emph{rescaled} matrix $\frac{n}{m}C_S$ -- whose expectation is equal to the identity -- which is given by
\begin{align}
    f_{h,r}(y) = \xi f_h(\xi y) = \frac{\sqrt{(\Lambda_{h,r} - y)_+(y-\lambda_{h,r})_+}}{2\rho \pi y (1-\xi y)}\,,
\end{align}
where 
\begin{align*}
\begin{cases}
    \lambda_{h,r} = \lambda_h / \xi = \left(\sqrt{1-\gamma} - \sqrt{(1-\xi) \rho}\right)^2\,,\\
    \Lambda_{h,r} = \Lambda_h / \xi = \left(\sqrt{1-\gamma} + \sqrt{(1-\xi) \rho}\right)^2\,.
\end{cases}
\end{align*}
The density $f_{h,r}$ resembles the Marchenko-Pastur density $\mu_\rho$, up to the factor $(1-\xi y)$ and corrections in the edge eigenvalues $\lambda_{h,r}$ and $\Lambda_{h,r}$. When $\xi, \gamma \approx 0$, then $\lambda_{h,r} \!\approx\! (1-\sqrt{\rho})^2$, $\Lambda_{h,r} \!\approx\! (1+\sqrt{\rho})^2$, and $f_{h,r}(x) \approx \mu_{\rho}(x)$. This is consistent with the fact that provided $m, d = o(n)$ so that $\xi,\gamma=0$, then the l.s.d.~of $\frac{n}{m}C_S$ is the Marchenko-Pastur law with parameter $\rho$ (see~\citep{jiang2009approximation} for a formal statement). In Figure~\ref{FigSRHTDensity}, we compare the empirical spectral density of the matrix $\frac{n}{m} C_S$ with $S$ an $m \times n$ SRHT to $f_{h,r}$, for fixed $d$ and $n$, and several values of $m$. We observe that these two densities match very closely, and so does the empirical spectral density using a Haar projection with $f_{h,r}$. Further, as $m$ increases, the limiting density $f_{h,r}$ departs from $\mu_\rho$, and then concentrates more and more around $1$. Note in particular that the support of $f_{h,r}$ is always within that of $\mu_\rho$. This can be formally verified by comparing their respective edge eigenvalues.

\begin{figure}[h!]
	\centering
	\includegraphics[width=8cm]{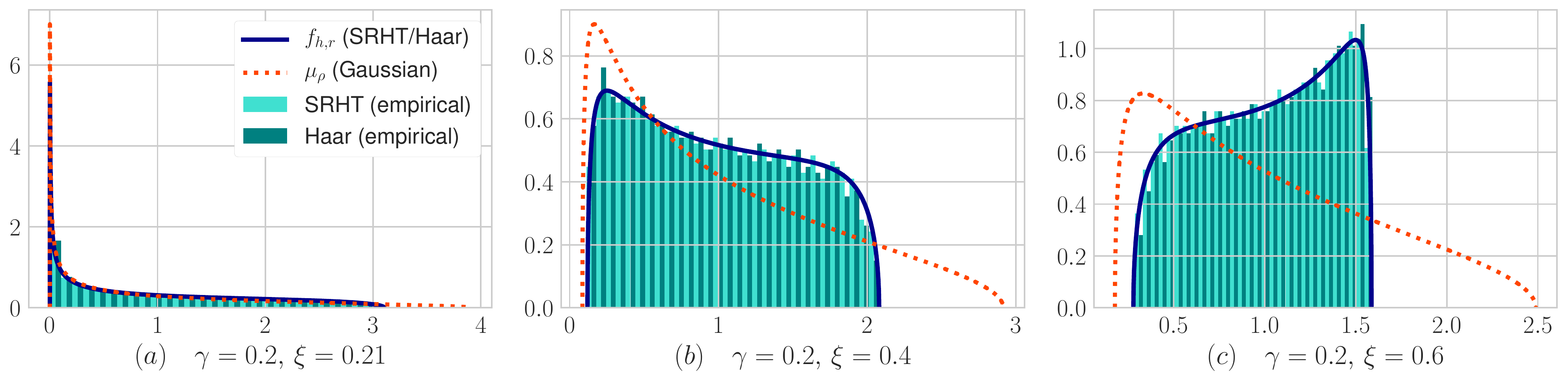}
	\caption{We use $n=8192$, $\gamma \approx \frac{d}{n} = 0.2$ and $\xi \approx \frac{m}{n} \in \{0.21, 0.4, 0.6\}$.}
	\label{FigSRHTDensity}
\end{figure}

\subsubsection{Orthogonal polynomials and optimal first-order method}
\label{sec:orthpoly}
Given a first-order method as in~\eqref{EqnFirstOrder}, we know from Lemma~\ref{LemmaPolynomialRecursion} that for a given iteration $t$, there exists a polynomial $p \in \real^0_t[X]$ such that $\Delta_t \!=\! p\!\left(C_S^{-1}\right) \Delta_0$, and
\begin{align}
\label{EqnErrorSRHT}
    \lim_{n \to \infty} \frac{\Exs{\|\Delta_t\|}^2}{\Exs{\|\Delta_0\|}^2} = \int_{\lambda_h}^{\Lambda_h} p^2\!\left(\lambda^{-1}\right) f_h(\lambda) \, \mathrm{d}\lambda\,.
\end{align}
Introducing the scaling parameters $\tau \!=\! \left(\frac{\sqrt{\Lambda_h}-\sqrt{\lambda_h}}{\sqrt{\Lambda_h}+\sqrt{\lambda_h}}\right)^2$, $c \!=\! \frac{4}{\left(\sqrt{1/\Lambda_h}+\sqrt{1/\lambda_h}\right)^2}$, $\alpha\!=\!(1-\sqrt{\tau})^2$, $\beta\!=\!(1+\sqrt{\tau})^2$, the rescaled polynomial $P(x) = p(x/c)$, and using the change of variable $x\!=\!c/\lambda$, we find that 
\begin{align}
    & \lim_{n \to \infty} \frac{\Exs{\|\Delta_t\|}^2}{\Exs{\|\Delta_0\|}^2}\\ =\,& \frac{c \tau}{(1-\tau)\gamma} \int_\alpha^\beta P^2(x) \frac{\sqrt{(x-\alpha)(\beta-x)}}{2 \pi \tau x (x-c)} \, \mathrm{d}x\\
    = \, & \frac{c \tau}{(1-\tau)\gamma} \int_\alpha^\beta P^2(x) \frac{\mu_\tau(x)}{x-c} \, \mathrm{d}x \label{EqnMinimizationProblemSRHT}
\end{align}
Thus, according to Lemma~\ref{LemmaOptimalSolutionPolynomialOptimizationProblem}, it suffices to find a family of polynomials $\{R_t\}$ orthogonal with respect to the density $\frac{x \mu_\tau(x)}{x-c}$ such that $\mathrm{deg}(R_t)=t$ and $R_t(0)=1$, in which case the minimizer over $P \in \real_t^0[X]$ of the integral in~\eqref{EqnMinimizationProblemSRHT} is equal to $R_t$, and the minimizer of~\eqref{EqnErrorSRHT} is then $\overline{R}_t(x) \!=\! R_t(cx)$.
\begin{theorem}
\label{thm:orthpoly}
Define the parameters $\omega\!=\!\frac{4}{\left(\sqrt{\beta-c}+\sqrt{\alpha-c}\right)^2}$ and $\kappa\!=\!\left(\frac{\sqrt{\beta-c}-\sqrt{\alpha-c}}{\sqrt{\beta-c}+\sqrt{\alpha-c}}\right)^2$. Let $\{\Pi_t\}$ be the orthogonal family of polynomials with respect to $\mu_\kappa$, that is, $\Pi_0(x)=1$, $\Pi_1(x)=1-x$, and for $t \gre 2$,
\begin{align}
\label{EqnRecursion22}
    \Pi_t(x) = (1+\kappa-x) \Pi_{t-1}(x) - \kappa \Pi_{t-2}(x)\,.     
\end{align}
Define the polynomials $R_t(x) = \Pi_t(\omega(x-c)) / \Pi_t(-\omega c)$. Then, it holds that $R_t(0)=1$, $\mathrm{deg}(R_t)=t$, and the family $\{R_t\}$ is orthogonal with respect to the density $\frac{x \mu_\tau(x)}{x-c}$.
\end{theorem}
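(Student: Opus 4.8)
The plan is to verify the three claimed properties of $\{R_t\}$ in order of increasing difficulty, reducing everything to known facts about the Marchenko--Pastur orthogonal polynomials $\{\Pi_t\}$ with respect to $\mu_\kappa$. Since $R_t(x) = \Pi_t(\omega(x-c))/\Pi_t(-\omega c)$ is an affine substitution followed by normalization, the degree claim $\mathrm{deg}(R_t)=t$ is immediate from $\mathrm{deg}(\Pi_t)=t$ (as recorded in Lemma~\ref{LemmaOrthogonalFamilyGaussian}, applied with $\rho$ replaced by $\kappa$) together with the fact that $\omega \neq 0$; the only thing to check is that the normalizing constant $\Pi_t(-\omega c)$ is nonzero, which holds because the zeros of $\Pi_t$ lie in the support $[(1-\sqrt\kappa)^2,(1+\sqrt\kappa)^2]\subset(0,\infty)$ of $\mu_\kappa$, whereas $-\omega c < 0$ (here $c>0$ by its definition, and $\omega>0$ provided $\alpha - c > 0$, i.e.\ provided the rescaled support $[\alpha,\beta]$ lies strictly to the right of the pole at $c$ — this positivity should be recorded as a preliminary observation, since it is also what makes the weight $\frac{x\mu_\tau(x)}{x-c}$ a genuine positive density on $[\alpha,\beta]$). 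The normalization claim $R_t(0)=1$ is then trivially true by construction.

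The substance is the orthogonality claim. First I would reduce it to an integral identity: $\{R_t\}$ is orthogonal with respect to $w(x)\,\mathrm{d}x := \frac{x\mu_\tau(x)}{x-c}\,\mathrm{d}x$ on $[\alpha,\beta]$ if and only if $\int_\alpha^\beta \Pi_k(\omega(x-c))\,\Pi_\ell(\omega(x-c))\, \frac{x\mu_\tau(x)}{x-c}\,\mathrm{d}x = 0$ for $k\neq \ell$, the normalizing denominators being harmless constants. I would then perform the change of variable $u = \omega(x-c)$, so $x = c + u/\omega$ and $x-c = u/\omega$, mapping $[\alpha,\beta]$ to $[\omega(\alpha-c),\omega(\beta-c)]$. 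The key algebraic point, which I would isolate as the heart of the proof, is that under this substitution the measure $\frac{x\mu_\tau(x)}{x-c}\,\mathrm{d}x$ transforms (up to an overall positive constant) into precisely $\mu_\kappa(u)\,\mathrm{d}u$ on $[(1-\sqrt\kappa)^2,(1+\sqrt\kappa)^2]$. Concretely: the factor $\frac{x}{x-c} = \frac{c+u/\omega}{u/\omega} = 1 + \frac{c\omega}{u}$, the Jacobian contributes $\mathrm{d}x = \mathrm{d}u/\omega$, and $\mu_\tau(x)\,\mathrm{d}x = \frac{\sqrt{(x-\alpha)(\beta-x)}}{2\pi\tau x}\,\mathrm{d}x$ turns $\sqrt{(x-\alpha)(\beta-x)}$ into $\frac{1}{\omega}\sqrt{(u-\omega(\alpha-c))(\omega(\beta-c)-u)}$; collecting the $x$ in the denominator of $\mu_\tau$ against the numerator $x$ from $\frac{x}{x-c}$, the net $x$-dependence cancels and one is left with $\frac{\text{const}}{u}\sqrt{(u-\omega(\alpha-c))(\omega(\beta-c)-u)}\,\mathrm{d}u$. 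It then remains to recognize $\omega(\alpha-c)$ and $\omega(\beta-c)$ as the Marchenko--Pastur edges $(1-\sqrt\kappa)^2$ and $(1+\sqrt\kappa)^2$: this is exactly the content of the definitions $\omega = 4/(\sqrt{\beta-c}+\sqrt{\alpha-c})^2$ and $\kappa = \big((\sqrt{\beta-c}-\sqrt{\alpha-c})/(\sqrt{\beta-c}+\sqrt{\alpha-c})\big)^2$, which are designed precisely so that the affine map $u\mapsto u$ sends the interval $[\alpha-c,\beta-c]$ (after scaling by $\omega$) onto $[(1-\sqrt\kappa)^2,(1+\sqrt\kappa)^2]$ — a one-line check using $(1\pm\sqrt\kappa)^2 = \omega(\beta-c)$ resp.\ $\omega(\alpha-c)$, and matching the constant in front of $\frac{1}{u}$ to the Marchenko--Pastur normalization $\frac{1}{2\pi\kappa}$.

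Once the measure identity $\frac{x\mu_\tau(x)}{x-c}\,\mathrm{d}x = \text{const}\cdot\mu_\kappa(\omega(x-c))\cdot\mathrm{d}(\omega(x-c))$ is established, orthogonality of $\{R_t\}$ follows immediately: $\int R_k R_\ell\, \frac{x\mu_\tau}{x-c}\,\mathrm{d}x$ equals a constant times $\int \Pi_k(u)\Pi_\ell(u)\,\mu_\kappa(u)\,\mathrm{d}u$, which vanishes for $k\neq\ell$ by the defining orthogonality of $\{\Pi_t\}$ with respect to $\mu_\kappa$ (again invoking Lemma~\ref{LemmaOrthogonalFamilyGaussian} with parameter $\kappa$). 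I expect the main obstacle to be purely bookkeeping: carefully tracking the rational factor $\frac{x}{x-c}$ through the substitution and confirming that it conspires with the $1/x$ in $\mu_\tau$ to leave exactly a $1/u$ (rather than some residual rational function of $u$), and then matching the constants and edge points against the $\mu_\kappa$ normalization. There is also a minor monotonicity/orientation check — that $u=\omega(x-c)$ is increasing and maps endpoints to endpoints in the right order — but no genuine analytic difficulty; all the work has been front-loaded into the clever choice of the parameters $\tau, c, \alpha, \beta, \omega, \kappa$.
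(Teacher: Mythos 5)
Your proposal is correct and follows essentially the same route as the paper: the change of variable $y=\omega(x-c)$ turns the weight $\frac{x\mu_\tau(x)}{x-c}\,\mathrm{d}x$ (the $x$ factors cancelling against the $1/x$ in $\mu_\tau$) into a positive constant times $\mu_\kappa(y)\,\mathrm{d}y$, after which orthogonality of $\{\Pi_t\}$ with respect to $\mu_\kappa$ gives the claim. The additional checks you record --- that $\omega(\alpha-c)=(1-\sqrt{\kappa})^2$ and $\omega(\beta-c)=(1+\sqrt{\kappa})^2$ by the choice of $\omega,\kappa$, and that $\Pi_t(-\omega c)\neq 0$ because the zeros of $\Pi_t$ lie in the positive support of $\mu_\kappa$ while $-\omega c<0$ --- are correct details that the paper leaves implicit.
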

\begin{proof}
For $k \!\neq \!\ell$, we have that $\int_\real R_k(x) R_\ell(x) \frac{x\mu_\tau(x)}{x-c} \,\mathrm{d}x \propto \int_\alpha^\beta \Pi_k\left(\omega(x-c)\right) \Pi_\ell\left((\omega(x-c)\right) \frac{\sqrt{(\beta-x)(x-\alpha)}}{2 \pi \rho (x-c)} \,\mathrm{d}x$. Using the change of variable $y=\omega(x-c)$, we find that the latter integral is (up to a constant) equal to $\int_\real \Pi_k(y) \Pi_\ell(y) \mu_\kappa(y) \, \mathrm{d}y$, which is itself equal to $0$ due to the orthogonality of the $\Pi_t$ with respect to $\mu_\kappa$.
\end{proof}
In order to derive the optimal first-order method, we need to find the three-terms recursion relationship satisfied by the polynomials $\{\overline R_t\}$. First, let us compute the normalization factor $u_t \!\defn\! \Pi_t(-\omega c)$. Evaluating~\eqref{EqnRecursion22} at $x=-\omega c$ and denoting $\eta \defn 1+\kappa+\omega c$, we find that $u_{t+1} = \eta u_t - \kappa u_{t-1}$, with the initial conditions $u_0=1$ and $u_1 = \Pi_1(-\omega c) = 1+\omega c = \eta -\kappa$. Thus, after solving this second-order linear system, we obtain that
\begin{align}
    & u_t = \frac{x_1-\kappa}{x_1 - x_2} x_1^t + \frac{ \kappa-x_2}{x_1-x_2} x_2^t\,,
\end{align}
where $x_1 \!=\! \frac{\eta}{2} \!+\! \sqrt{\frac{\eta^2}{4}-\kappa}$ and $x_2 \!=\! \frac{\eta}{2} \!-\! \sqrt{\frac{\eta^2}{4}-\kappa}$. It is easy to check that $\eta^2/4 > \kappa$, so that $x_1$ and $x_2$ are indeed distinct and real. Then, using the change of variable $y=\omega (x-c)$ in~\eqref{EqnRecursion22}, we get the following three-terms recurrence relationship, that is, $\overline R_0(x) = 1$, $\overline R_1(x) = 1 + b_{h,1} x$ and for $k \gre 2$,
\begin{align}
    \overline R_t(x) = (a_{h,t} + x b_{h,t}) \overline R_{t-1}(x) + (1-a_{h,t}) \overline R_{t-2}(x)\,,
\end{align}
where $a_{h,t} = \frac{\eta\, u_{t-1}}{u_t}$ for $t \gre 1$, and $b_{h,t} = -\frac{\omega c \, u_{t-1}}{u_t}$ for $t \gre 2$. Using Theorem~\ref{TheoremGeneralOptimalFirstOrderMethod}, we obtain the optimal first-order method, which we present in Algorithm~\ref{AlgorithmOptimalFirstOrderHaar} in its finite-sample approximation.
\begin{algorithm}[H]
    \caption{Optimal First-Order Method for SRHT (or Haar) embeddings.}
    \label{AlgorithmOptimalFirstOrderHaar}
\begin{algorithmic}
	\STATE {\bfseries Input:} Data matrix $A \in \mathbb{R}^{n \times d}$, sketch size $m \gre d+1$, initial point $x_0 \in \real^d$.
	\STATE Sample an $m \times n$ SRHT $S$.
	\STATE Compute the sketched matrix $S_A = S \cdot A$.
	\STATE Compute and cache a factorization of $H_S = S_A^\top S_A$.
	\STATE Set $x_1 = x_0 + b_{h,1} H_S^{-1} A^\top (A x_0 - b)$.
	\FOR{$t= 2$ {\bfseries to} $T$}
	    \STATE Compute the gradient $g_{t-1} = A^\top (Ax_{t-1} - b)$. 
	    \STATE Perform the update
		\begin{align}
		\label{EqnOptimalAlgorithmHaar}
		    x_t = x_{t-1} + b_{h,t} H_S^{-1} g_t + (1-a_{h,t}) (x_{t-2}-x_{t-1})\,.
		\end{align}
		where $a_{h,t}$ and $b_{h,t}$ are as described in Section \ref{sec:orthpoly}.
	\ENDFOR
	\STATE Return the last iterate $\mathbf{x_T}$.
\end{algorithmic}
\end{algorithm}
Differently from the Gaussian case, Algorithm~\ref{AlgorithmOptimalFirstOrderHaar} does not correspond to the Heavy-ball method~\eqref{EqnIHSUpdate} using the fixed step size $\mu\!=\!(1-\rho)^2$ and the fixed momentum parameter $\beta \!=\! \rho$, which was obtained by~\cite{lacotte2019faster} based on edge eigenvalues analysis and standard finite-sample concentration bounds on the spectrum of SRHT matrices~\citep{tropp2011improved}. 

Using the new asymptotically exact extreme eigenvalues we derived in Theorem~\ref{TheoremHaarDensity} -- which are different from the bounds obtained by~\cite{tropp2011improved} -- and following the same extreme eigenvalues analysis proposed by~\cite{lacotte2019faster}, we can derive an optimal Heavy-ball method for which the step size $\mu_h$ and momentum parameter $\beta_h$ are given by $\mu_h = \frac{4}{\left(\frac{1}{\sqrt{\Lambda_h}}+\frac{1}{\sqrt{\lambda_h}}\right)^2}$ and $\beta_h = \left(\frac{\sqrt{\Lambda_h}-\sqrt{\lambda_h}}{\sqrt{\Lambda_h} + \sqrt{\lambda_h}}\right)^2$. 

Hence, leveraging the whole shape of the limiting distribution, as opposed to using only the edge eigenvalues, yields an optimal first-order method which is different, and has non-constant step sizes and momentum parameters. But interestingly, it holds that as the iteration number $t$ grows to $+\infty$, then the update coefficients $a_{h,t}$ and $b_{h,t}$ have respective limits $1+\beta_h$ and $-\mu_h$, which yields exactly this Heavy-ball method. Thus, we expect the latter and Algorithm~\ref{AlgorithmOptimalFirstOrderHaar} to have a similar performance as $t$ grows large.

We complete our analysis of the SRHT case by characterizing the asymptotic error $\mathcal{L}^*_{f_h,t}$.
\begin{theorem}
\label{TheoremOptimalAlgorithmHaar}
The sequence of iterates $\{x_t\}$ given by Algorithm~\ref{AlgorithmOptimalFirstOrderHaar} is asymptotically optimal, and the optimal error satisfies $\mathcal{L}_{f_h,t}^* \asymp \frac{(1-\xi)^t}{(1-\gamma)^t} \, \rho^t$.
\end{theorem}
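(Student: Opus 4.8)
\textbf{Proof plan for Theorem~\ref{TheoremOptimalAlgorithmHaar}.}

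The asymptotic optimality of $\{x_t\}$ is already established: Theorem~\ref{thm:orthpoly} exhibits the orthogonal family $\{R_t\}$ with respect to $\frac{x\mu_\tau(x)}{x-c}$, Lemma~\ref{LemmaOptimalSolutionPolynomialOptimizationProblem} identifies $\overline R_t(x)=R_t(cx)$ as the minimizer of~\eqref{EqnErrorSRHT}, and Theorem~\ref{TheoremGeneralOptimalFirstOrderMethod} shows that the Heavy-ball recursion built from the three-terms recurrence of $\{\overline R_t\}$ realizes this minimizer. So the only thing to prove is the rate estimate $\mathcal{L}_{f_h,t}^* \asymp \left(\frac{(1-\xi)\rho}{1-\gamma}\right)^t$. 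The plan is to evaluate the optimal value directly. By Lemma~\ref{LemmaGenericConvergenceRateGaussian}-type manipulations (the chain of equalities leading to~\eqref{EqnMinimizationProblemSRHT}) and Lemma~\ref{LemmaOptimalSolutionPolynomialOptimizationProblem} applied to $\nu$ with $\mathrm{d}\nu(x)=\frac{x\mu_\tau(x)}{x-c}\mathrm{d}x$, we have
\begin{align}
\label{EqnOptimalValueSRHT}
    \mathcal{L}_{f_h,t}^* = \frac{c\tau}{(1-\tau)\gamma} \int_\alpha^\beta R_t^2(x)\, \frac{\mu_\tau(x)}{x-c}\,\mathrm{d}x\,.
\end{align}
Since $R_t(x)=\Pi_t(\omega(x-c))/u_t$ and $\int \Pi_k\Pi_\ell\,\mathrm{d}\mu_\kappa$ reduces, after the substitution $y=\omega(x-c)$, to the orthogonality integral for the Marchenko–Pastur polynomials $\{\Pi_t\}$ associated with $\mu_\kappa$, the integral in~\eqref{EqnOptimalValueSRHT} equals $\frac{1}{\omega u_t^2}\int \Pi_t^2(y)\,\frac{\mu_\kappa(y)}{y}\,\mathrm{d}y$ up to explicit constants. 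Here I would invoke the computation already carried out in the proof of Theorem~\ref{TheoremOptimalAlgorithmGaussian} (deferred to Appendix~\ref{ProofTheoremOptimalAlgorithmGaussian}), which shows $\int \Pi_t^2(y)\frac{1}{y}\mu_\kappa(y)\,\mathrm{d}y = \kappa^t$ (the Gaussian-case identity with ratio $\kappa$ in place of $\rho$). Hence
\begin{align}
    \mathcal{L}_{f_h,t}^* = \mathrm{(const)} \cdot \frac{\kappa^t}{u_t^2}\,,
\end{align}
where the constant is a fixed function of $\gamma,\xi$ (hence bounded away from $0$ and $\infty$) and $u_t=\Pi_t(-\omega c)$ is the normalization factor computed in the main text.

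The next step is the asymptotics of $u_t$. From the closed form $u_t = \frac{x_1-\kappa}{x_1-x_2}x_1^t + \frac{\kappa-x_2}{x_1-x_2}x_2^t$ with $x_1 = \frac{\eta}{2}+\sqrt{\frac{\eta^2}{4}-\kappa}$, $x_2=\frac{\eta}{2}-\sqrt{\frac{\eta^2}{4}-\kappa}$, and noting $\eta=1+\kappa+\omega c>1+\kappa$ so that $x_1 > 1 > \kappa > x_2 > 0$ and the leading coefficient $\frac{x_1-\kappa}{x_1-x_2}$ is a positive constant, we get $u_t \asymp x_1^t$. Therefore $\mathcal{L}_{f_h,t}^* \asymp (\kappa/x_1^2)^t$, and the whole theorem reduces to the algebraic identity
\begin{align}
    \frac{\kappa}{x_1^2} = \frac{(1-\xi)\rho}{1-\gamma}\,.
\end{align}
To verify this, I would unwind the definitions: $\lambda_h,\Lambda_h$ from Theorem~\ref{TheoremHaarDensity}; then $\tau=\left(\frac{\sqrt{\Lambda_h}-\sqrt{\lambda_h}}{\sqrt{\Lambda_h}+\sqrt{\lambda_h}}\right)^2$ and $c=\frac{4}{(\sqrt{1/\Lambda_h}+\sqrt{1/\lambda_h})^2}=\frac{4\lambda_h\Lambda_h}{(\sqrt{\lambda_h}+\sqrt{\Lambda_h})^2}$; then $\alpha=(1-\sqrt\tau)^2$, $\beta=(1+\sqrt\tau)^2$, $\omega=\frac{4}{(\sqrt{\beta-c}+\sqrt{\alpha-c})^2}$, $\kappa=\left(\frac{\sqrt{\beta-c}-\sqrt{\alpha-c}}{\sqrt{\beta-c}+\sqrt{\alpha-c}}\right)^2$, and $\eta=1+\kappa+\omega c$, whence $x_1=\frac{\eta+\sqrt{\eta^2-4\kappa}}{2}$. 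It is convenient to observe $\alpha\beta=(1-\tau)^2$ and $\alpha+\beta=2(1+\tau)$, and that $c<\alpha$ (so $\alpha-c,\beta-c>0$, as is implicitly used), so that $(\beta-c)(\alpha-c)=\alpha\beta-c(\alpha+\beta)+c^2$ and $\sqrt{\beta-c}\sqrt{\alpha-c}=\sqrt{(1-\tau)^2-2c(1+\tau)+c^2}$; then $\kappa$ and $\omega c$ can each be written in terms of $\tau$ and $c$ only, $\eta^2-4\kappa$ simplifies, and $x_1$ collapses to an expression in $\tau,c$; finally substituting $c,\tau$ back in terms of $\lambda_h,\Lambda_h$ and then in terms of $\gamma,\xi,\rho$ via $\lambda_h\Lambda_h=(1-\gamma)^2\,?$ — more precisely $\sqrt{\lambda_h\Lambda_h}=|(1-\gamma)\xi-(1-\xi)\gamma|=|\xi-\gamma|$ and $\sqrt{\lambda_h}+\sqrt{\Lambda_h}=2\sqrt{(1-\gamma)\xi}$ (these follow from the $(\sqrt a\mp\sqrt b)^2$ forms in Theorem~\ref{TheoremHaarDensity}), yields the claimed value $\frac{(1-\xi)\rho}{1-\gamma}$.

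\textbf{Main obstacle.} The conceptual content — reducing the optimal error to $\kappa^t/u_t^2$ and then to the constant $\kappa/x_1^2$ — is short and uses only tools already in hand (the change of variables, Lemma~\ref{LemmaOptimalSolutionPolynomialOptimizationProblem}, the Gaussian identity $\int\Pi_t^2 y^{-1}\mu_\kappa = \kappa^t$, and a linear-recurrence asymptotic). The genuine difficulty is the last step: the nested radicals defining $\tau,c,\alpha,\beta,\omega,\kappa,\eta$ must be chased all the way down to show $\kappa/x_1^2=\frac{(1-\xi)\rho}{1-\gamma}$. This is a finite but delicate algebraic simplification; the risk is sign/branch errors when taking square roots (one must track that $c<\lambda_h<\Lambda_h$, that $x_1>1>x_2>0$, and that $\xi>\gamma$ so $\sqrt{\lambda_h\Lambda_h}=\xi-\gamma$), rather than any missing idea. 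I would organize it by first expressing everything in the two intermediate variables $(\tau,c)$, simplifying $x_1$ there, and only at the very end substituting the values of $\tau$ and $c$ in terms of $\gamma,\xi$; the bookkeeping is cleaner that way than carrying $\gamma,\xi$ throughout. The full computation is deferred to the appendix.
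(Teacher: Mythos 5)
Your route is genuinely different from the paper's, and it does lead to the right answer, but it is longer and has one slip you must repair. The paper never evaluates the optimal value at the explicit polynomial $R_t$: starting from~\eqref{EqnMinimizationProblemSRHT} it writes $\frac{\mu_\tau(x)}{x-c}=\frac{x}{x-c}\cdot\frac{\mu_\tau(x)}{x}$ and sandwiches $\frac{x}{x-c}$ between the constants $\frac{\beta}{\beta-c}$ and $\frac{\alpha}{\alpha-c}$ on $[\alpha,\beta]$ (legitimate since $c<\alpha$), so that the \emph{minimization itself} is squeezed between constant multiples of the Gaussian problem for $\mu_\tau$; Lemma~\ref{LemmaGenericConvergenceRateGaussian} and Theorem~\ref{TheoremOptimalAlgorithmGaussian} then give $\mathcal{L}^*_{f_h,t}\asymp\mathcal{L}^*_{\mu_\tau,t}=\tau^t$, and a one-line computation with $\sqrt{\Lambda_h}\pm\sqrt{\lambda_h}=2\sqrt{(1-\gamma)\xi}\,,\,2\sqrt{(1-\xi)\gamma}$ gives $\tau=\frac{(1-\xi)\rho}{1-\gamma}$. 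This completely bypasses $\omega$, $\kappa$, $u_t$, $x_1$, $x_2$ and the radical-chasing you flag as the main obstacle. Your approach instead exploits the exact minimizer $R_t=\Pi_t(\omega(\cdot-c))/u_t$, the linear-recurrence asymptotics $u_t\asymp x_1^t$ (your ordering $x_1>1>\kappa>x_2>0$ is correct), and the identity $\kappa/x_1^2=\tau$; the latter is true, and in fact less painful than you fear: since $x_1x_2=\kappa$ it is equivalent to $x_2/x_1=\tau$, and working in the variables $(\tau,c)$ with $s=\alpha+\beta-2c=2(1+\tau)-2c$ and $g=\sqrt{(\alpha-c)(\beta-c)}$ one finds $\eta=\frac{4(1+\tau)}{s+2g}$ and $\sqrt{\eta^2-4\kappa}=\frac{4(1-\tau)}{s+2g}$ (using $(\beta-\alpha)^2=16\tau$), whence $x_2/x_1=\tau$ for \emph{any} $0<c<\alpha$; only the final substitution $\tau=\frac{(1-\xi)\rho}{1-\gamma}$ uses $\gamma,\xi$.

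The slip: your claim that the substitution $y=\omega(x-c)$ turns $\int_\alpha^\beta R_t^2(x)\frac{\mu_\tau(x)}{x-c}\,\mathrm{d}x$ into a constant times $\frac{1}{u_t^2}\int\Pi_t^2(y)\frac{\mu_\kappa(y)}{y}\,\mathrm{d}y$ is not an identity. The orthogonality weight $\frac{x\mu_\tau(x)}{x-c}\,\mathrm{d}x$ does map to $\frac{\kappa}{\tau\omega}\mu_\kappa(y)\,\mathrm{d}y$, but the \emph{error} weight carries an extra $1/x$, and $x=c+y/\omega$, so one gets $\frac{\kappa}{\tau u_t^2}\int\Pi_t^2(y)\frac{\mu_\kappa(y)}{y+\omega c}\,\mathrm{d}y$: the denominator is $y+\omega c$, not $y$. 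Consequently your display $\mathcal{L}^*_{f_h,t}=\mathrm{(const)}\cdot\kappa^t/u_t^2$ is false as an equality. It is easily repaired for the purposes of the theorem: on $[(1-\sqrt\kappa)^2,(1+\sqrt\kappa)^2]$ both $\frac{1}{y+\omega c}$ and $\frac1y$ are bounded above and below by positive constants, so $\int\Pi_t^2\frac{\mu_\kappa(y)}{y+\omega c}\,\mathrm{d}y\asymp\int\Pi_t^2\frac{\mu_\kappa(y)}{y}\,\mathrm{d}y=\frac{\kappa^t}{1-\kappa}$ (note the Gaussian identity from Appendix~\ref{ProofTheoremOptimalAlgorithmGaussian} is $\frac{\kappa^t}{1-\kappa}$, not $\kappa^t$, which is harmless here), and you recover $\mathcal{L}^*_{f_h,t}\asymp\kappa^t/u_t^2\asymp(\kappa/x_1^2)^t=\tau^t$ — but note this repair is exactly the paper's sandwich trick, applied later in the argument. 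Two minor corrections: the bracketing you need is $c<\alpha<\beta$ (which holds because $c=\Lambda_h\,\alpha$ and $\Lambda_h<1$), not $c<\lambda_h<\Lambda_h$; and with $\xi>\gamma$ one has $\sqrt{\lambda_h\Lambda_h}=(1-\gamma)\xi-(1-\xi)\gamma=\xi-\gamma$, as you say.
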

\begin{proof}
We have already argued that $\{x_t\}$ is asymptotically optimal. It remains to show that $\mathcal{L}^*_{f_h,t} \asymp \frac{(1-\xi)^t}{(1-\gamma)^t} \, \rho^t$, whose proof is deferred to Appendix~\ref{ProofTheoremOptimalAlgorithmHaar}.
\end{proof}
Of natural interest is to compare the rate of convergence $\rho_h \!\defn\! \frac{(1-\xi)}{(1-\gamma)} \rho$ of Algorithm~\ref{AlgorithmOptimalFirstOrderHaar} to the rate $\rho$ of Algorithm~\ref{AlgorithmOptimalFirstOrderGaussian}. We have $\frac{\rho_h}{\rho} = \frac{(1-\xi)}{(1-\gamma)}$, which is always smaller than $1$ since $\xi > \gamma$. Hence, these rotation matrices yield an optimal first-order method which is uniformly better than that with Gaussian embeddings, by a factor which can be made arbitrarily large by increasing the sketch size $m$ relatively to the other dimensions. Further, if we do not reduce the size of the original matrix, so that $m=n$ and $\xi=1$, then the algorithm converges in one iteration. This means that we do not lose any information by sketching. In contrast, Gaussian projections introduce more distortions than rotations, even though the rows of a Gaussian matrix are almost orthogonal to each other in the high-dimensional setting.

Further, we compare the rate of Algorithm~\ref{AlgorithmOptimalFirstOrderHaar} to the rate of the best Heavy-ball method with refreshed SRHT embeddings which is equal to $\rho_h^\text{ref} = \rho \cdot \frac{\xi(1-\xi)}{\gamma^2 + \xi - 2\xi \gamma}$. We have $\rho_h < \rho_h^\text{ref}$ if and only if $\frac{1-\xi}{1-\gamma} < \frac{\xi (1-\xi)}{\gamma^2+\xi-2\xi\gamma}$, which is equivalent to $\gamma^2 + \xi - 2\gamma \xi < \xi - \gamma \xi$, again equivalent to $\gamma^2 < \gamma \xi$, i.e., $\gamma < \xi$, which holds by assumption. Thus, a fixed embedding yields a first-order method which is uniformly faster than the best Heavy-ball method with refreshed sketches. However, it remains an open problem whether one can find a first-order method with refreshed sketches which yields a rate better than $\rho_h^\text{ref}$. We recapitulate the different convergence rates in Table~\ref{TabComparisonConvergenceRates}.
\begin{table}[!h]
	\caption{Asymptotic rates of convergence for the best first-order method~\eqref{EqnFirstOrder} and the best Heavy-ball method~\eqref{EqnIHSUpdate}, with fixed or refreshed Gaussian or SRHT embeddings. For the best Heavy-ball method rates, we use previously derived results from~\cite{ozaslan2019iterative, lacotte2019faster, lacotteiterative20}.}
	\label{TabComparisonConvergenceRates}
	\centering
	\begin{scriptsize}
	\begin{tabular}{|c|c|c|c|c|}
		\toprule
		Algorithm & Fixed & Refreshed & Fixed & Refreshed \\
		& Gaussian & Gaussian & SRHT & SRHT\\
		\midrule
		Best first-order& $\rho$ & unknown & $\frac{1-\xi}{1-\gamma} \rho$ & unknown \\
	    method~\eqref{EqnFirstOrder}  & & & & \\
		\midrule
		Best Heavy-ball & $\rho$ & $\rho$ & $\rho$ & $\frac{\xi(1-\xi)}{\gamma^2 + \xi - 2\xi \gamma}$ \\
		method~\eqref{EqnIHSUpdate} & & & & \\
		\bottomrule
	\end{tabular}
	\end{scriptsize}
\end{table}

In Figure~\ref{FigComparisonIHS}, we verify numerically that Algorithm~\ref{AlgorithmOptimalFirstOrderHaar} is faster than the best Heavy-ball method with refreshed SRHT sketches ("SRHT (refreshed)"), and than Algorithm~\ref{AlgorithmOptimalFirstOrderGaussian}. Further, we compare Algorithm~\ref{AlgorithmOptimalFirstOrderHaar} to the Heavy-ball method with fixed SRHT embedding whose parameters are found based on edge eigenvalues analysis, using either our new density $f_h$ ("SRHT (edge eig.)") -- as described previously in Section~\ref{sec:orthpoly} --, or, the previous bounds derived by~\cite{tropp2011improved} ("SRHT (baseline)"). As predicted, Algorithm~\ref{AlgorithmOptimalFirstOrderHaar} performs very similarly to the former, and better than the latter. Finally, we verify that our predicted convergence rates for Algorithms~\ref{AlgorithmOptimalFirstOrderGaussian} and~\ref{AlgorithmOptimalFirstOrderHaar} are matched empirically, on Figure~\ref{FigComparisonRates}. We mention that we use small perturbations of the algorithmic parameters derived from our asymptotic analysis. Following the notations introduced in Theorem~\ref{TheoremGeneralOptimalFirstOrderMethod}, instead of $a_t$ and $b_t$, we use $a_t^\delta=(1+\delta)a_t$ and $b_t^\delta = (1-\delta)b_t$ with $\delta=0.01$. These conservative perturbations are necessary in practice due to the finite-sample approximations. We defer a detailed description of the experimental setup to Appendix~\ref{SectionExperimentalSetup}.
\begin{figure}[h!]
	\centering
	\includegraphics[width=8cm]{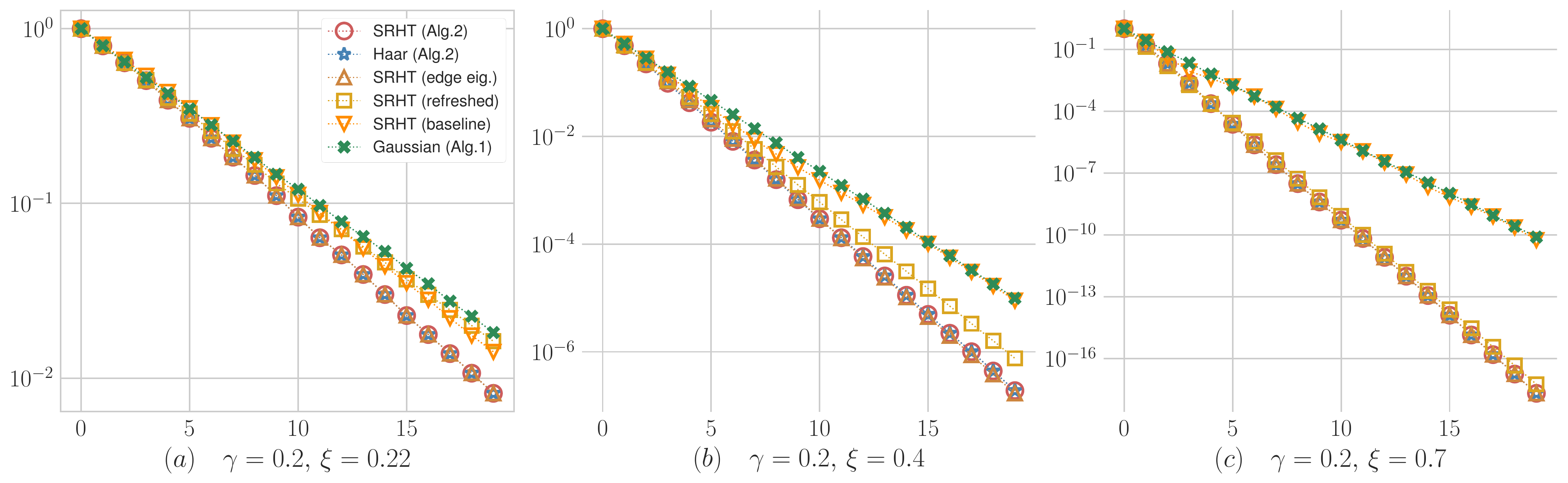}
	\caption{Error $\Exs \|\Delta_t\|^2 / \Exs \|\Delta_0\|^2$ versus number of iterations. We use $n = 8192$, $d/n \approx \gamma = 0.2$ and $m/n \approx \xi \in \{0.22, 0.4, 0.7\}$.}
	\label{FigComparisonIHS}
\end{figure}
\begin{figure}[h!]
	\centering
	\includegraphics[width=8cm]{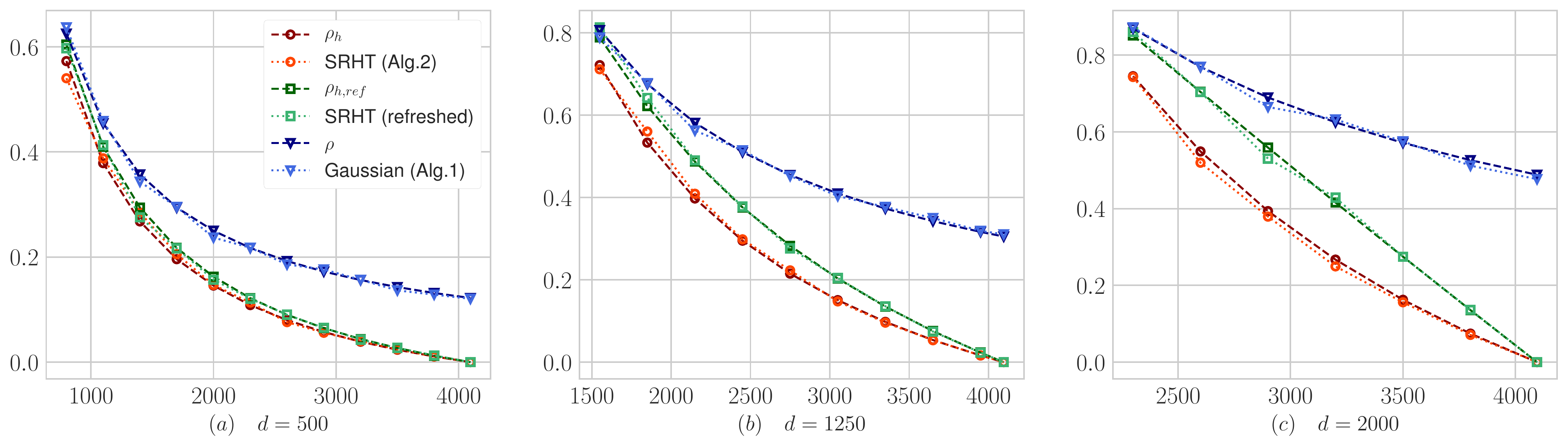}
	\caption{Empirical and theoretical convergence rates versus sketch size $m$. We use $n = 8192$ and $d \in \{500, 1250, 2000\}$.}
	\label{FigComparisonRates}
\end{figure}

\subsubsection{Complexity Analysis}

We turn to a complexity analysis of Algorithm~\ref{AlgorithmOptimalFirstOrderHaar} and compare it to the currently best known algorithmic complexities for solving~\eqref{EqnMain}. 

Given a fixed (and independent of the dimensions) error $\varepsilon \!>\! 0$, we aim to find $\widetilde x$ such that $\|A(\widetilde x - x^*)\|^2 \less \varepsilon$. Among the best complexity algorithms is the pre-conditioned conjugate gradient algorithm~\citep{rokhlin2008fast}. As described in Section~\ref{SectionIntroduction}, it is decomposed into three parts: sketching the data matrix, factoring the pre-conditioned matrix, and then the iterations of the conjugate gradient method. This algorithm prescribes at least the sketch size $m \asymp d \log d$ in order to converge with high-probability guarantees. This theoretical prescription is based on the finite-sample bounds on the extremal eigenvalues of the matrix $C_S$ derived by~\cite{tropp2011improved}. Then, the resulting complexity scales as
\begin{align}
    \mathcal{C}_{\text{cg}} \asymp nd \log d + d^3 \log d + nd \log(1/\varepsilon)\,,
\end{align}
where $nd \log d$ is the sketching cost, $d^3 \log d$ the pre-conditioning cost, and $nd \log(1/\varepsilon)$ is the per-iteration cost $nd$ times the number of iterations $\log 1/\varepsilon$. 

Our analysis shows that for $m \asymp d$, Algorithm~\ref{AlgorithmOptimalFirstOrderHaar} yields a complexity no larger than
\begin{align}
    \mathcal{C}_{\text{fhs}} \asymp nd \log d + d^3 + nd \log(1/\varepsilon)\,,
\end{align}
Note that in the above complexity, we omit the rate of convergence -- which would yield an even smaller complexity -- to simplify the comparison. Since $\varepsilon$ is independent of the dimensions, it follows that 
\begin{align}
    \frac{C_{\text{fhs}}}{C_{\text{cg}}} \asymp \frac{1}{\log d}\,, \quad d \to \infty\,.
\end{align}
Hence, with a smaller sketch size, the resulting complexity improves by a factor $\log d$ over the current state-of-the-art in randomized preconditioning for dense problems (e.g., see \cite{boutsidis2013improved,nelson2013osnap}). We also note that the $O(d^3)$ term can be improved to $O(d^\omega)$, where $\omega$ is the exponent of matrix multiplication. 

It has also been shown by~\citet{lacotteiterative20} that the Heavy-ball update~\eqref{EqnIHSUpdate} with refreshed SRHT embeddings yields a complexity $\mathcal{C}_{\text{ihs}}$ such that $C_{\text{ihs}}/C_{\text{cg}} \asymp \frac{1}{\log d}$, provided that $m \asymp d$. In order to compare more finely Algorithm~\ref{AlgorithmOptimalFirstOrderHaar} with this algorithm, we consider an arbitrary sketch size $m$. Then, the complexity of Algorithm~\ref{AlgorithmOptimalFirstOrderHaar} is
\begin{align}
    \mathcal{C}_{\text{fhs}} \asymp nd \log m + md^2 + nd \frac{\log(1/\varepsilon)}{\log \rho_h}\,, 
\end{align}
whereas the former algorithm yields
\begin{align}
    \mathcal{C}_{\text{ihs}} \asymp \left(nd \log m + md^2 + nd \right) \frac{\log(1/\varepsilon)}{\log \rho_h^\text{ref}}\,.
\end{align}
Since, in particular, $\rho_h$ is uniformly smaller than $\rho_h^\text{ref}$, it always holds that 
\begin{align}
    \mathcal{C}_{\text{fhs}} \less \mathcal{C}_{\text{ihs}}\,.
\end{align}
It should be noted that we translate our asymptotic results to finite-sample versions. Although it is beyond our scope, we believe that our results could be extended to finite-sample versions with high-probability guarantees and with similar rates of convergence.

\section*{Acknowledgements}
The authors thank Sifan Liu and Edgar Dobriban for helpful discussions. This work was partially supported by the National Science Foundation under grant IIS-1838179.



\newpage 

\onecolumn
\appendix

\section{Proof of main results}

For a polynomial $P$ and a measure (resp.~density) $\mu$, we will denote $\mu[P] \defn \int_\real P(x) \mu(x) \mathrm{d}\mu(x)$ (resp.~$\mu[P] \defn \int_\real P(x)\mu(x) \mathrm{d}x$). For a density $\mu$, we stress the fact that $\mu[x]$ and $\mu(x)$ refer to different quantities.

\subsection{Proof of Theorem~\ref{TheoremGeneralOptimalFirstOrderMethod}}
\label{ProofTheoremGeneralOptimalFirstOrderMethod}

We recall that $\Pi_0(x) \!=\! 1$, $\Pi_1(x) \!=\! 1 + b_1 x$ and for $t \gre 2$,
\begin{align}
\label{EqnRescaledPolyRec2}
    \Pi_t(x) = \left(a_t + b_t x\right) \Pi_{t-1}(x) + (1-a_t) \Pi_{t-2}(x)\,.
\end{align}
First, we claim that for any $t \gre 0$, $\Delta_t \!=\! \Pi_t\!\left(C_S^{-1}\right) \Delta_0$, and we show it by induction. Since $\Pi_0(x)=1$, we have that $\Delta_0 = \Pi_0\!\left(C_S^{-1}\right) \cdot \Delta_0$. Since $x_1 \!=\! x_0 + b_1 H_S^{-1} \nabla f(x_0)$, subtracting $x^*$ and multiplying by $U^\top A$ the latter equation, we obtain that $\Delta_1 = \Pi_1\!\left(C_S^{-1}\right) \cdot \Delta_0$. Suppose that for some $t \gre 2$, the induction claim holds for $t-1$ and $t-2$. Subtracting $x^*$ and multiplying by $U^\top A$ the update formula~\eqref{EqnGenericOptimalUpdate}, we obtain that
\begin{align*}
    \Delta_t &= \Delta_{t-1} + (1-a_t) (\Delta_{t-2} - \Delta_{t-1}) + b_t C_S^{-1} \, \Delta_{t-1}\\
    &= (a_t + b_t C_S^{-1}) \Delta_{t-1} + (1-a_t) \Delta_{t-2}\\
    &= \left((a_t + b_t C_S^{-1}) \Pi_{t-1}\!\left(C_S^{-1}\right) + (1-a_t) \Pi_{t-2}\!\left(C_S^{-1}\right) \right) \Delta_0\\
    &= \Pi_t\!\left(C_S^{-1}\right) \Delta_0\,,
\end{align*}
where we used the induction hypothesis for $t\!-\!1$ and $t\!-\!2$ in the third equality, and the recursion formula~\eqref{EqnRescaledPolyRec2} in the last equality. Consequently, using Lemma~\ref{LemmaPolynomialRecursion}, we obtain that
\begin{align*}
    \lim_{n \to \infty} \, \frac{\Exs {\|\Delta_t\|}^2}{\Exs{\|\Delta_0\|}^2} = \int_a^b \Pi_t^2\!\left( \lambda^{-1}\right) \mathrm{d} \mu(\lambda) = \mathcal{L}^*_{\mu,t}\,.
\end{align*}

\subsection{Proof of Theorem~\ref{TheoremOptimalAlgorithmGaussian}}
\label{ProofTheoremOptimalAlgorithmGaussian}
We have already argued that $\{x_t\}$ is asymptotically optimal. It remains to prove that $\mathcal{L}_{\mu_\rho,t}^* \!=\! \rho^t$. 

Set $\lambda_\rho(x)=x^{-1} \mu_\rho(x)$. Let $\{\Pi_t\}$ be an orthogonal basis with respect to $\mu_\rho$ such that $\Pi_t(0)=1$ and $\mathrm{deg}(\Pi_t)=t$. From Lemma~\ref{LemmaGenericConvergenceRateGaussian}, we have $\mathcal{L}_{\mu_\rho,t}^* = (1-\rho) \lambda_\rho[\Pi_t^2]$, so that it suffices to show that $\lambda_\rho[\Pi_t^2] = (1-\rho)^{-1} \rho^{t}$. On the other hand, in the proof of Lemma~\ref{LemmaOrthogonalFamilyGaussian} in Appendix~\ref{ProofLemmaOrthogonalFamilyGaussian}, we establish that there exists a sequence of polynomials $\{T_k\}_{k \gre 1}$ such that for any $t \gre 1$ and $k, \ell \gre 1$,
\begin{align*}
    & \Pi_t(x) = 1 - \sum_{j=1}^t \lambda_\rho[T_t] \, T_t(x)\,,\\
    & \lambda_\rho[T_t] = (-1)^{t-1} \sqrt{\rho}^{t-1}\,,\\
    & \lambda_\rho[T_k T_\ell] = \delta_{k \ell}\,,
\end{align*}
where $\delta_{k\ell} = 1$ if $k=\ell$, and $0$ otherwise. Using the latter properties, it follows that
\begin{align*}
    \lambda_\rho[\Pi_t^2] &= \lambda_\rho[1] - 2 \, \sum_{j=1}^t \lambda_\rho[T_j]^2 + \sum_{j=1}^t \lambda_\rho[T_j]^2 \underbrace{\lambda_\rho[T_j^2]}_{=1}\\
    &= \lambda_\rho[1] - \sum_{j=1}^t \lambda_\rho[T_j]^2\\
    &= \frac{1}{1-\rho} - \sum_{j=0}^{t-1} \rho^j\\
    &= \frac{\rho^{t}}{1-\rho}\,,
\end{align*}
and, in the third equality, we used the standard inverse moment formula $\lambda_\rho[1]\!=\!\int x^{-1} \mu_\rho(x)\mathrm{dx}\!=\!(1-\rho)^{-1}$. Consequently, we obtain the claimed formula, that is, $\mathcal{L}^*_{\mu_\rho,t}=\rho^t$.

\subsection{Proof of Theorem~\ref{TheoremHaarDensity}}
\label{ProofHaarDensity}

According to Lemma~\ref{LemmaStieltjesHaar}, the support of $F_h$ is included within the interval $(0,1)$. Therefore, we fix $x \in (0,1)$ and we consider the complex number $z=x+iy$, where $y > 0$. Our goal is to compute the quantity
\begin{align*}
    \lim_{y\to0^+} \frac{1}{\pi} |\mathrm{Im}\!\left(m_h(z)\right)|\,.
\end{align*}
If the above limit exists, then $F_h$ is differentiable at $x$ and its derivative is equal to this limit~\citep{silverstein1995analysis}. Note that the absolute value is not necessary, since $\mathrm{Im}\!\left(m_h(z)\right)$ is positive on $\mathbb{C}^+$. But it will avoid to specify explicitly the branch cut of the square-root considered later in this proof, and thus additional technicalities.

From Lemma~\ref{LemmaStieltjesHaar}, it holds that
\begin{align}
\label{EqnDecompositionmh}
    2 \gamma m_h(z) = \frac{2\gamma-1}{1-z} + \frac{\xi-\gamma}{z(1-z)} - \frac{R(z)}{z(1-z)}\,.
\end{align}
where $R(z) = \sqrt{(\gamma+\xi-2+z)^2 + 4(z-1)(1-\gamma)(1-\xi)}$, and the branch cut of the square-root is chosen such that $m_h > 0$ on $\mathbb{C}^+$, $m_h < 0$ on $\mathbb{C}^-$ (the complex numbers with negative imaginary parts), and $m_h > 0$ on $\real_-$ (the negative real numbers). Further, we have
\begin{align*}
    \frac{1}{z(1-z)} = \frac{x(1-x)+y^2 + i y (2x-1)}{(x(1-x)+y^2)^2 + y^2 (2x-1)^2}\,,\qquad \frac{1}{1-z} = \frac{1-x+iy}{(1-x)^2 + y^2}\,,
\end{align*}
from which we deduce that the imaginary parts of the first two terms in the expansion~\eqref{EqnDecompositionmh} of $2 \gamma m_h(z)$ are given by
\begin{align*}
    & \mathrm{Im}\!\left(\frac{2\gamma-1}{1-z}\right) = \frac{(2\gamma-1) y}{(1-x)^2 + y^2}\,,\\
    & \mathrm{Im}\!\left(\frac{\xi-\gamma}{z(1-z)}\right) = \frac{(\xi-\gamma)(2x-1)y}{(x(1-x)+y^2)^2 + y^2 (2x-1)^2}\,.
\end{align*}
Since $x \in (0,1)$, the limits $y \to 0^+$ of the two above quantities exist and are equal to $0$. Hence, provided it exists, we have
\begin{align}
\label{EqnLimit11}
    \lim_{y \to 0^+} 2 \gamma |\mathrm{Im}(m_h(z))| &= \lim_{y \to 0^+} \left|\mathrm{Im}\!\left(\frac{R(z)}{z(1-z)}\right)\right|\,.
\end{align}
We introduce the function $f(z) \!=\! (z-\alpha-\beta)^2 + 4(z-1)\alpha \beta$ where $\alpha = 1-\xi$ and $\beta=1-\gamma$, so that $R(z)\!=\!\sqrt{f(z)}$. We have $f(z) = X + i Y$ where
\begin{align*}
    & X = (x-\alpha-\beta)^2 - y^2 + 4(x-1)\alpha \beta\,, \\
    & Y = 2(x-\alpha - \beta + 2\alpha \beta) y\,.
\end{align*}
Thus, the absolute values of the real and imaginary parts of $R(z)$ are given by
\begin{align*}
    & |\mathrm{Re}(R(z))| = \frac{1}{\sqrt{2}} \sqrt{\sqrt{X^2 + Y^2} + X}\,,\\
    & |\mathrm{Im}(R(z))| = \frac{1}{\sqrt{2}} \sqrt{\sqrt{X^2 + Y^2} - X}\,,
\end{align*}
and they have respective limits
\begin{align*}
    & \lim_{y \to 0^+} |\mathrm{Re}(R(z))| = \sqrt{|\varphi(x)|} \cdot \mathbf{1}(\varphi(x) > 0)\,,\\
    & \lim_{y \to 0^+} |\mathrm{Im}(R(z))| = \sqrt{|\varphi(x)|} \cdot \mathbf{1}(\varphi(x) < 0)\,,
\end{align*}
where $\varphi(x) \defn (x-\alpha-\beta)^2 + 4(x-1)\alpha \beta$. Further, we have
\begin{align*}
    \mathrm{Im}\left(\frac{R(z)}{z(1-z)}\right) = \frac{y(2x-1)\mathrm{Re}(R(z)) + (x(1-x)+y^2) \mathrm{Im}(R(z))}{g(x,y)}\,,
\end{align*}
where $g(x,y) = (x(1-x)+y^2)^2 + y^2 (2x-1)^2$. Note that $\lim_{y \to 0^+} g(x,y) = x^2 (1-x)^2$, which is non-zero since $x \in (0,1)$. Then, we obtain
\begin{align*}
    \lim_{y \to 0^+} \left|\mathrm{Im}\!\left(\frac{R(z)}{z(z-1)}\right)\right| = \frac{\sqrt{|\varphi(x)|} \mathbf{1}(\varphi(x) < 0)}{x(1-x)}\,,
\end{align*}
Using~\eqref{EqnLimit11}, it follows that for any $x \in (0,1)$, $\lim_{y \to 0^+} \frac{1}{\pi} |\mathrm{Im}(m_h(x))|$ exists. This implies that $F_h$ admits a density over $(0,1)$, given by
\begin{align*}
    f_h(x) = \frac{1}{2\gamma\pi} \frac{\sqrt{(\Lambda_h - x)_+(x-\lambda_h)_+}}{x (1-x)}\,,
\end{align*}
where we used the fact that $\varphi(x) = (x-\Lambda_h)(x-\lambda_h)$, and we recall that the edge eigenvalues $\Lambda_h$ and $\lambda_h$ are given by
\begin{align*}
    & \lambda_h \defn \left(\sqrt{(1-\gamma)\xi}-\sqrt{(1-\xi)\gamma}\right)^2\\
    & \Lambda_h \defn \left(\sqrt{(1-\gamma)\xi}+\sqrt{(1-\xi)\gamma}\right)^2\,.
\end{align*}
Using the fact that $F_h$ is supported within the interval $(0,1)$, we have recovered the whole density of the limiting spectral distribution $F_h$ of the matrix $U^\top S^\top S U$.

\subsection{Proof of Theorem~\ref{TheoremOptimalAlgorithmHaar}}
\label{ProofTheoremOptimalAlgorithmHaar}

We have already argued that $\{x_t\}$ is asymptotically optimal. It remains to show that $\mathcal{L}_{f_h,t}^* \asymp \frac{(1-\xi)^t}{(1-\gamma)^t} \rho^t$.

Using~\eqref{EqnMinimizationProblemSRHT}, we have
\begin{align*}
    \mathcal{L}_{f_h,t}^* &= \frac{c \tau}{(1-\tau)\gamma} \min_{P \in \real_t^0[X]} \int_\alpha^\beta P^2(t) \frac{\mu_\tau(x)}{x-c} \, \mathrm{d}x\\
    &= \frac{c \tau}{(1-\tau)\gamma} \min_{P \in \real_t^0[X]} \int_\alpha^\beta P^2(t) \frac{x}{x-c} \frac{\mu_\tau(x)}{x} \, \mathrm{d}x\,.
\end{align*}
For $x \in [\alpha, \beta]$, it holds that
\begin{align*}
    \frac{\beta}{\beta - c} \less \frac{x}{x-c} \less \frac{\alpha}{\alpha-c}\,,
\end{align*}
and consequently, we can lower and upper bound $\mathcal{L}_{f_h,t}^*$ as follows, 
\begin{align*}
    \frac{c \tau}{(1-\tau)\gamma} \, \frac{\beta}{\beta-c} \min_{P \in \real_t^0[X]} \int_\alpha^\beta P^2(t) \frac{\mu_\tau(x)}{x} \, \mathrm{d}x \less \mathcal{L}_{f_h,t}^* \less \frac{c \tau}{(1-\tau)\gamma} \, \frac{\alpha}{\alpha-c} \min_{P \in \real_t^0[X]} \int_\alpha^\beta P^2(t) \frac{\mu_\tau(x)}{x} \, \mathrm{d}x\,.
\end{align*}
From Lemma~\ref{LemmaGenericConvergenceRateGaussian}, we know that $\mathcal{L}_{\mu_\tau,t}^* = (1-\tau) \min_{P \in \real_t^0[X]} \int_\alpha^\beta P^2(t) \frac{\mu_\tau(x)}{x} \, \mathrm{d}x$. Thus,
\begin{align*}
    \frac{c \tau}{(1-\tau)^2\gamma}\,\frac{\beta}{\beta-c} \mathcal{L}_{\mu_\tau,t}^* \less \mathcal{L}_{f_h,t}^* \less \frac{c \tau}{(1-\tau)^2\gamma}\,\frac{\alpha}{\alpha-c} \mathcal{L}_{\mu_\tau,t}^*\,.
\end{align*}
From Theorem~\ref{TheoremOptimalAlgorithmGaussian}, we know that $\mathcal{L}_{\mu_\tau,t}^* = \tau^t$. Thus, we obtain that
\begin{align*}
    \mathcal{L}_{f_h,t}^* \asymp \tau^t\,.
\end{align*}
A simple calculation gives that $\tau = \frac{1-\xi}{1-\gamma} \rho$, which yields the claimed result. As for the Gaussian case, an exact calculation of $\mathcal{L}_{f_h,t}^*$ is actually possible. But, after investigation, the resulting expression is lengthy and fairly difficult to simplify, whereas we are primarily interested in the scaling in terms of the iteration number $t$.

\section{Proofs of intermediate results}

\subsection{Proof of Lemma~\ref{LemmaPolynomialRecursion}}
\label{ProofLemmaPolynomialRecursion}

Suppose that $\{x_t\}$ is generated by a first-order method~\eqref{EqnFirstOrder}. Fix $t \gre 1$, then there exists $\alpha_{0,t}, \dots, \alpha_{t-1,t}$ such that
\begin{align}
\label{EqnLemma1Proof1}
    x_t = x_{t-1} + \sum_{j=0}^{t-1} \alpha_{j,t} H_S^{-1} A^\top (A x_j-b)\,.
\end{align}
Multiplying both sides of~\eqref{EqnLemma1Proof1} by $U^\top A$, subtracting $U^\top A \xstar$ and using the normal equation $A^\top A \xstar = A^\top b$, we find that
\begin{align}
\label{EqnLemma1Proof2}
    \Delta_t = \Delta_{t-1} + \sum_{j=0}^{t-1} \alpha_{j,t} C_S^{-1} \Delta_j\,.
\end{align}
First, we aim to show that there exists a polynomial $p_t \in \real_t^0[X]$ such that $\Delta_t \!=\! p_t\!\left(C_S^{-1}\right) \Delta_0$. We proceed by induction over $t \gre 0$. For $t\!=\!0$, the claim is true. Suppose that for some $t \gre 1$, it holds that $\Delta_j = p_j\!\left(C_S^{-1}\right) \Delta_0$ with $p_j \in \real_j^0[X]$ for $j\!=\!0,\dots,t-1$. Then, we have from~\eqref{EqnLemma1Proof2} that
\begin{align}
    \Delta_t &= p_{t-1}\!\left(C_S^{-1}\right)\Delta_0 + \sum_{j=0}^{t-1} \alpha_{j,t} C_S^{-1} p_j\!\left(C_S^{-1}\right) \Delta_0\\
    &= \left(p_{t-1}\!\left(C_S^{-1}\right) +  \sum_{j=0}^{t-1} \alpha_{j,t} C_S^{-1} p_j\!\left(C_S^{-1}\right)\right) \Delta_0 \label{EqnLemma1Proof3}\,.
\end{align}
We set $p_t(x) = p_{t-1}(x) + \sum_{j=0}^{t-1} \alpha_{j,t} x p_j(x)$. It holds that $p_t(0) = p_{t-1}(0) + 0 = 1$, and $\mathrm{deg}(p_t) \less t$ since $\mathrm{deg}(p_{t-1}) \less t-1$ and $\mathrm{deg}(xp_j(x)) \less j+1 \less t$ for $j = 0,\dots,t-1$. Then, from~\eqref{EqnLemma1Proof3}, we have $\Delta_t = p_t\!\left(C_S^{-1}\right) \Delta_0$, which concludes the induction.

Second, we aim to show that $\lim_{n \to \infty} \frac{\Exs[\|\Delta_t\|^2]}{\Exs[\|\Delta_0\|^2]} = \int_\real p^2\left(\lambda^{-1}\right) \mathrm{d}\mu(\lambda)$, where $\mu$ is the l.s.d.~of $C_S$ for $S$ an $m \times n$ Gaussian or SRHT embedding. The Gaussian case is straightforward to prove, by using the rotational invariance of the Gaussian distribution. The SRHT case is more involved, and we leverage tools from free probability theory.

\subsubsection{The Gaussian case}

Let $S$ be an $m \times n$ random matrix with i.i.d.~entries $\mathcal{N}(0,1/m)$. Then, by rotational invariance, $SU$ is an $m \times d$ matrix with i.i.d.~entries $\mathcal{N}(0,1/m)$. Write the eigenvalue decomposition $C_S = V \Sigma V^\top$ where $V$ is a $d \times d$ orthogonal matrix, and $\Sigma$ a diagonal matrix with positive entries $\lambda_1, \dots, \lambda_d$. A standard result states that $V$ and $\Sigma$ are independent matrices, and $V$ is Haar-distributed. 

Fix $t \gre 0$, and let $p_t \in \real_t^0[X]$ such that $\Delta_t = p_t\!\left(C_S^{-1}\right) \Delta_0$. Taking the squared norm and the expectation, we obtain that
\begin{align*}
    \Exs\left[\|\Delta_t\|^2\right] &= \Exs\left[\Delta_0^\top p_t^2\!\left(C_S^{-1}\right) \Delta_0\right]\\
    &= \Exs\left[\Delta_0^\top V p_t^2\!\left(\Sigma^{-1}\right) V^\top \Delta_0\right]\,.
\end{align*}
Using the independence of $\Sigma$, $V$ and $\Delta_0$ and writing $V = [v_1, \dots, v_d]$, we further obtain that
\begin{align*}
    \Exs\left[\|\Delta_t\|^2\right] &= \Exs\!\left[\Delta_0^\top V \Exs\left[p_t^2\!\left(\Sigma^{-1}\right)\right] V^\top \Delta_0\right]\\
    &=  \sum_{i=1}^d \Exs\!\left[(v_i^\top \Delta_0)^2\right] \Exs[p_t^2(\lambda_i^{-1})]\,.
\end{align*}
Since each $v_i$ is uniformly distributed on the unit sphere, we have that $\Exs \left[(v_i^\top \Delta_0)^2\right] = \frac{1}{d}\Exs{\|\Delta_0\|}^2$, so that
\begin{align*}
    \Exs\left[\|\Delta_t\|^2\right] &= \frac{1}{d} \Exs{\|\Delta_0\|}^2 \Exs\left[\sum_{i=1}^d p_t^2(\lambda_i^{-1})\right]\\
    &= \Exs{\|\Delta_0\|}^2 \, \frac{1}{d} \trace \Exs\!\left[p_t^2\!\left(C_S^{-1}\right)\right]\,.
\end{align*}
Dividing both sides of the above equation by $\Exs{\|\Delta_0\|}^2$ and taking the limit $d \to \infty$, we obtain the claimed result,
\begin{align*}
    \frac{\Exs\left[\|\Delta_t\|^2\right]}{\Exs\left[\|\Delta_0\|^2\right]} = \int_\real p_t^2\!\left(\lambda^{-1}\right) \mathrm{d}\mu(\lambda)\,.
\end{align*}

\subsubsection{The SRHT case}

The SRHT does not satisfy rotational invariance as the Gaussian distribution (or Haar matrices), and we need to use a different approach for this proof, based on \emph{asymptotically liberating sequences of unitary matrices}~\cite{anderson2014asymptotically}.

Let $S$ be an $m \times n$ SRHT embedding. We denote by $\mu$ the l.s.d.~of the matrix $C_S$. Following the same first steps as for the Gaussian case, we have that
\begin{align}
    \Exs\left[\|\Delta_t\|^2\right] = \Exs \trace \left[ p_t^2\!\left(C_S^{-1}\right) \Delta_0 \Delta_0^\top \right] = \Exs \trace \left[ p_t^2\!\left(C_S^{-1}\right) \Sigma_0 \right]\,,
\end{align}
where $\Sigma_0 \defn \Exs \Delta_0 \Delta_0^\top$. Writing $p_t^2(x) = \sum_{k=0}^{t} a_k x^{2k}$, it follows that 
\begin{align}
    \Exs\left[\|\Delta_t\|^2\right] = \sum_{k=0}^{t} a_k \, \Exs \trace \left[ C_S^{-2k} \Sigma_0 \right]\,.
\end{align}
Introducing the matrix $\widetilde \Sigma_0 = \frac{\Sigma_0}{\trace \Sigma_0 / d}$, we further obtain
\begin{align}
\label{EqnIntermediateSRHT1}
    \frac{\Exs\left[\|\Delta_t\|^2\right]}{\Exs \left[\|\Delta_0\|^2\right]} = \sum_{k=0}^{t} a_k \,\frac{1}{d} \Exs \trace \left[ C_S^{-2k} \widetilde \Sigma_0 \right]\,.
\end{align}
We use the following result, whose proof leverages some notions from free probability theory. We defer the proof to Appendix~\ref{ProofLemmaAsymptoticDecouplingSRHT}.
\begin{lemmaappendix}
\label{LemmaAsymptoticDecouplingSRHT}
It holds that for any $k \gre 0$, 
\begin{align}
    \lim_{n \to \infty} \frac{1}{d} \Exs \trace \left[ C_S^{-2k} \widetilde \Sigma_0 \right] = \lim_{n \to \infty}\frac{1}{d} \Exs \trace \left[ C_S^{-2k} \right] = \int_\real \lambda^{-2k} \mathrm{d}\mu(\lambda)\,.
\end{align}
\end{lemmaappendix}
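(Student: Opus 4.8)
\textbf{Proof proposal for Lemma~\ref{LemmaAsymptoticDecouplingSRHT}.}

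The plan is to establish the asymptotic freeness (or, more precisely, the asymptotic decoupling of normalized traces) between the resolvent-type matrix $C_S^{-2k}$ and the deterministic matrix $\widetilde\Sigma_0$, using the machinery of asymptotically liberating sequences of unitary matrices from~\cite{anderson2014asymptotically}. First I would write $C_S = U^\top S^\top S U$ and expose the unitary structure: since $S = BH_nDP$ and the classical Walsh--Hadamard matrix $H_n$ conjugated by the random sign diagonal $D$ and random permutation $P$ forms an asymptotically liberating sequence, the family $\{C_S^{-2k}\}$ and the family of diagonal (or fixed) matrices are asymptotically free in the sense that mixed normalized-trace moments factorize. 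Concretely, I would invoke the result that for an asymptotically liberating ensemble $V_n$ and deterministic sequences $M_n, N_n$ with bounded operator norms and converging normalized-trace moments, $\frac1d\trace(V_n M_n V_n^\top N_n) \to (\lim\frac1d\trace M_n)(\lim\frac1d\trace N_n)$, and more generally all alternating products decouple.

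The key steps, in order, are: (i) reduce the claim to showing $\frac1d\Exs\trace[C_S^{-2k}\widetilde\Sigma_0] - \frac1d\Exs\trace[C_S^{-2k}]\cdot\frac1d\trace[\widetilde\Sigma_0] \to 0$, using that $\frac1d\trace\widetilde\Sigma_0 = 1$ by construction; (ii) express $C_S^{-2k}$ via the spectral calculus as a polynomial limit or, better, approximate it uniformly by polynomials in $C_S$ on its spectrum, which is bounded away from $0$ and $\infty$ by Lemma~\ref{LemmaStieltjesHaar} and Theorem~\ref{TheoremHaarDensity} (the support of $F_h$ lies in a compact subinterval of $(0,1)$, with extreme eigenvalues $\lambda_h,\Lambda_h$); (iii) for each monomial $C_S^{j}$, write out $C_S^{j}$ in terms of the liberating unitaries and the sampling matrix $B$, and apply the asymptotic liberation/freeness property to factorize the mixed moment involving $\widetilde\Sigma_0$; (iv) control the approximation errors and the expectation/limit interchange using the bounded operator norms and standard concentration of the empirical spectral distribution (so that the event that $C_S$ has eigenvalues outside a fixed compact subinterval of $(0,1)$ has vanishing probability, and on that event one can bound traces by $\|\widetilde\Sigma_0\|_2$ times a polynomially-growing factor times a super-polynomially small probability); (v) conclude that the limit equals $\frac1d\Exs\trace[C_S^{-2k}] \to \int \lambda^{-2k}\,\mathrm{d}\mu(\lambda)$, the last convergence being the definition of weak convergence of the e.s.d.\ applied to the bounded continuous function $\lambda\mapsto\lambda^{-2k}$ on the support.

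The main obstacle I expect is step (iii)--(iv): carefully verifying that the Hadamard-based ensemble genuinely qualifies as asymptotically liberating in the precise technical sense required (the conditions in~\cite{anderson2014asymptotically} involve bounds on the $\|\cdot\|_\infty$-type norms of the conjugating unitaries, which for $H_nD$ amount to the $O(n^{-1/2})$ delocalization of Hadamard matrices combined with the randomization by $D$ and $P$), and then handling the fact that $S$ is \emph{rectangular} and involves the Bernoulli sampling matrix $B$ rather than a clean unitary conjugation. One clean way around the rectangularity is to work instead with the $n\times n$ matrix $\frac1\xi P^\top D H_n B H_n D P$ and relate its moments to those of $C_S$ via the identity $\trace(C_S^{j}) = \trace((U U^\top)\,(\text{that matrix})^{j}\,\cdots)$ together with $UU^\top$ being a fixed projection of rank $d$, and then again invoke freeness between the Hadamard ensemble and the projection $UU^\top$; but keeping track of the inverse powers forces one to first argue invertibility and spectral confinement uniformly, which is where the quantitative edge-eigenvalue control from Theorem~\ref{TheoremHaarDensity} (and the finite-sample bounds it refines) does the real work. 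The remaining steps are essentially bookkeeping once asymptotic freeness is in hand.
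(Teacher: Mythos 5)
Your high-level plan --- reduce the inverse powers to polynomial moments, lift to the $n\times n$ level, and invoke the Anderson--Farrell liberation machinery --- is indeed the paper's strategy, but the two steps you dismiss as bookkeeping are exactly where the proof lives, and as sketched they do not go through. First, the decoupling you invoke is misapplied: once you lift (as you must, since liberation acts by conjugation of $n\times n$ matrices), the quantity to control is $\frac{n}{d}\,\tau_n\!\left(X_1(YX_2)^k\right)$ with $X_1=WU\widetilde\Sigma_0U^\top W^\top$, $X_2=WUU^\top W^\top$, $Y=HWBW^\top H$ and $\tau_n=\frac1n\Exs\trace$. Here $X_1$ and $X_2$ belong to the \emph{same} family and are very far from free from each other ($X_1X_2=X_1$), and the word alternates $k$ times with $Y$; asymptotic freeness of $\{X_1,X_2\}$ from $\{Y\}$ does not let you ``factorize the mixed moment'': traces of alternating words in free families do not split into products of traces (already $\tau(xyxy)\neq\tau(x)^2\tau(y)^2$ in general), and your cited statement $\frac1d\trace(VMV^\top N)\to\lim\frac1d\trace M\cdot\lim\frac1d\trace N$ only covers words in which each family appears once. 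What is actually needed, and what the paper proves in Lemma~\ref{LemmaCollectionProperties}, is the replacement identity $\lim\tau_n(X_1(YX_2)^k)=\lim\tau_n(X_2(YX_2)^k)$, obtained by expanding the centered alternating word and inducting on $k$, using the algebraic relations $Y^2=Y$, $X_2^2=X_2$, $X_1X_2=X_2X_1=X_1$ together with $\lim\tau_n(X_1)=\lim\tau_n(X_2)$.

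Second, the appeal to Anderson--Farrell itself needs a preparatory step that your sketch flags but does not resolve. In $C_S=U^\top W^\top HBHWU$ the Bernoulli matrix $B$ is conjugated by the deterministic Hadamard matrix, whereas Corollary~3.2 of Anderson--Farrell is applied in the paper with the liberating pair $\{W,HW\}$, which requires $B$ to enter as $(HW)B(HW)^\top$. The paper bridges this with an equality in distribution, $C_S\overset{\mathrm{d}}{=}U^\top W^\top HWBW^\top HWU$ (Lemma~\ref{LemmaSomeDistributionEq}), proved by commuting the random permutation $P$ past the diagonals $B$ and $D$ --- precisely where the permutation-randomized variant of the SRHT does real work. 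Your alternative of working directly with $W^\top HBHW$ and ``invoking freeness with the projection $UU^\top$'' corresponds to randomization on only one side of $H$ (relative unitary $P^\top DH$), an ensemble the cited corollary does not cover; you would have to either prove liberation for that ensemble or carry out the same redistribution trick. Finally, your route to the inverse powers (uniform polynomial approximation plus a small bad event) is not complete as stated: on the bad event $\lambda_{\min}(C_S)$ can be arbitrarily close to $0$, so a super-polynomially small probability does not by itself control $\Exs\!\left[\lambda_{\min}(C_S)^{-2k}\mathbf{1}_{\mathrm{bad}}\right]$. The paper instead expands $C_S^{-1}$ as the Neumann series $\sum_{j\gre 0}(I-C_S)^j$, valid since the spectrum lies in $(0,1)$, and exchanges sums and limits by absolute convergence (Lemma~\ref{LemmaReductionDecoupling}); if you prefer the truncation route you need an a priori moment bound near the lower spectral edge.
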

Combining~\eqref{EqnIntermediateSRHT1} and the result of Lemma~\ref{LemmaAsymptoticDecouplingSRHT}, it follows that
\begin{align}
    \lim_{n \to \infty} \frac{\Exs\left[\|\Delta_t\|^2\right]}{\Exs \left[\|\Delta_0\|^2\right]} = \sum_{k=0}^t a_k \int_\real \lambda^{-2k} \mathrm{d}\mu(\lambda) = \int_\real p_t^2\!\left(\lambda^{-1}\right) \mathrm{d} \mu(\lambda)\,,
\end{align}
which is the claimed result.

\subsection{Proof of Lemma~\ref{LemmaAsymptoticDecouplingSRHT}}
\label{ProofLemmaAsymptoticDecouplingSRHT}

We introduce a few needed concepts from free probability that will be used in this proof. We refer the reader to~\citep{voiculescu1992free,hiai2006semicircle,nica2006lectures,anderson2010introduction} for an extensive introduction to this field. Consider the algebra $\mathcal{A}_n$ of $n \times n$ random matrices. For $\!X_n\! \in \mathcal{A}_n$, we define the linear functional $\tau_n(X_n) \!\defn\! \frac{1}{n}\Exs\left[\trace X_n\right]$. Then, we say that a family $\{X_{n,1}, \dots, X_{n,I}\}$ of random matrices in $\mathcal{A}_n$ is \emph{asymptotically free} if for every $i \in \{1,\dots,I\}$, $X_{n,i}$ has a limiting spectral distribution, and if $\tau_n\left(\prod_{j=1}^m P_j\left(X_{n,i_j}-\tau\left(P_j(X_{n,i_j})\right)\right)\right) \rightarrow 0$ almost surely for any positive integer $m$, any polynomials $P_1, \dots, P_m$ and any indices $i_1, \hdots, i_m \in \{1,\dots,I\}$ with $i_1 \!\neq\!i_2, \dots,i_{m-1}\!\neq\!i_m$. 

Let $S$ be an $n \times n$ SRHT embedding (we consider the SRHT before discarding its zero rows). By definition, we can write $S=B H W$, where $B$ is an $n \times n$ matrix with i.i.d.~Bernoulli entries on the diagonal, with success probability $m/n$, $H = H_n$ is the $n$-th Walsh-Hadamard matrix. The matrix $W$ is an $n \times n$ bi-signed permutation, i.e., $W = D P$, where $D$ is a diagonal matrix with i.i.d.~random signs, and $P$ is an $n \times n$ uniformly random permutation matrix.

We aim to show that for any $k \gre 0$,
\begin{align}
\label{EqnGoalLemmaB2}
    \lim_{d \to \infty} \tau_d\left( C_S^{-k} \widetilde \Sigma_0 \right) = \lim_{d \to \infty} \tau_d\left( C_S^{-k}\right)\,.
\end{align}
We reduce the problem of proving~\eqref{EqnGoalLemmaB2} to the following, which is more simple to treat. The proof of this reduction is deferred to Appendix~\ref{ProofLemmaReductionDecoupling}.
\begin{lemmaappendix}
\label{LemmaReductionDecoupling}
Suppose that for any $k \gre 0$, we have 
\begin{align}
\label{EqnNoninversemoments}
    \lim_{d \to \infty} \tau_d\left( C_S^k \widetilde \Sigma_0 \right) = \lim_{d \to \infty} \tau_d\left( C_S^k\right)\,.
\end{align}
Then, the claim~\eqref{EqnGoalLemmaB2} is true for any $k \gre 0$.
\end{lemmaappendix}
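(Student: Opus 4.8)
The hypothesis~\eqref{EqnNoninversemoments} says that $\widetilde\Sigma_0$ is ``spectrally transparent'' against every \emph{polynomial} in $C_S$, and I want to upgrade this to the \emph{function} $x\mapsto x^{-k}$; this is legitimate because the l.s.d.\ $\mu$ of $C_S$ is supported inside $[\lambda_h,\Lambda_h]$ with $\lambda_h>0$ (Theorem~\ref{TheoremHaarDensity}), so $x^{-k}$ is analytic on a neighbourhood of $\mathrm{supp}(\mu)$. First I would record two structural facts: since $S^\top S$ is orthogonally similar to the diagonal Bernoulli matrix $B$, one has $0\preceq C_S=U^\top S^\top SU\preceq I$, hence $\mathrm{spec}(C_S)\subseteq[0,1]$ \emph{surely}; and the standing assumption that the condition number of $\Sigma_0$ stays bounded, together with $\trace\widetilde\Sigma_0=d$, gives $\widetilde\Sigma_0\succeq 0$ and $\|\widetilde\Sigma_0\|_2\less C'$ for a constant $C'$. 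Diagonalising $C_S=\sum_i\lambda_i v_iv_i^\top$ and setting $\bar\nu_d\defn\Exs\!\big[\tfrac1d\sum_i (v_i^\top\widetilde\Sigma_0 v_i)\,\delta_{\lambda_i}\big]$, the measure $\bar\nu_d$ is a probability measure on $[0,1]$ with $\tau_d(C_S^{j}\widetilde\Sigma_0)=\int x^{j}\,\mathrm d\bar\nu_d(x)$ for every $j\in\mathbb Z$ (for $j<0$, on the probability-$(1-o(1))$ event that $C_S$ is invertible). By~\eqref{EqnNoninversemoments}, and since $F_h=\mu$ is the l.s.d.\ of $C_S$, all non-negative moments of $\bar\nu_d$ converge to those of $\mu$; the moment problem on the compact $[0,1]$ being determinate, $\bar\nu_d\Rightarrow\mu$ weakly, and in particular $\bar\nu_d([0,\delta])\to 0$ for each fixed $\delta<\lambda_h$.

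Next I would transfer the hypothesis from polynomials to $x^{-k}$ on the good event $\mathcal G_d\defn\{\lambda_{\min}(C_S)\gre \lambda_h/2\}$, on which $\|I-C_S\|_2\less 1-\lambda_h/2<1$, so that $C_S^{-k}=\sum_{j\gre 0}\binom{j+k-1}{k-1}(I-C_S)^j$ converges in operator norm with the tail beyond level $N$ of norm $\less\varepsilon_N$, where $\varepsilon_N\to 0$ uniformly on $\mathcal G_d$. Each truncation is a polynomial in $C_S$; by~\eqref{EqnNoninversemoments} the truncated $\widetilde\Sigma_0$-averages and the plain averages have a common limit, namely $\sum_{j\le N}\binom{j+k-1}{k-1}\int(1-x)^j\mathrm d\mu\xrightarrow{N\to\infty}\int x^{-k}\mathrm d\mu$, the contribution of $\mathcal G_d^c$ to the truncated terms being $O(\mathbb P(\mathcal G_d^c))\to 0$ since $\|I-C_S\|_2\less 1$ always. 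Combining with the uniform tail bound $\varepsilon_N$ and $\|\widetilde\Sigma_0\|_2\less C'$, I would obtain
\begin{align*}
\limsup_{d\to\infty}\big|\tau_d(C_S^{-k}\widetilde\Sigma_0)-\tau_d(C_S^{-k})\big|
\;\less\;2C'\,\limsup_{d\to\infty}\Exs\!\big[\lambda_{\min}(C_S)^{-k}\,\mathbf 1\{\lambda_{\min}(C_S)<\lambda_h/2\}\big]\,,
\end{align*}
using the crude bound $\big|\tau_d(C_S^{-k}\widetilde\Sigma_0\mathbf 1_{\mathcal G_d^c})\big|\less C'\,\Exs[\lambda_{\min}(C_S)^{-k}\mathbf 1_{\mathcal G_d^c}]$ (and the same with $\widetilde\Sigma_0$ replaced by $I$). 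Everything thus reduces to showing that the right-hand side vanishes.

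This last estimate is the crux, and the only place where one must go beyond~\eqref{EqnNoninversemoments}, into a quantitative small-ball bound for the smallest eigenvalue of the SRHT-based matrix $C_S$. I would split it in two. (i) Since $\widetilde m\sim\mathrm{Binomial}(n,m/n)$ has mean $m$ with $m/n\to\xi>\gamma$, a Bernstein bound gives $\widetilde m\gre d+2k$ with probability $1-e^{-c_1 n}$; conditionally on $\{\widetilde m=m'\}$, $C_S=U^\top\Pi U$ is the compression of the rank-$m'$ projection $\Pi$ onto $\mathrm{range}(S)$, and I would prove $\mathbb P(\lambda_{\min}(C_S)\less t\mid\widetilde m=m')\less (\mathrm{const})\cdot t^{\,m'-d+1}$ --- a bound on the largest principal angle between $\mathrm{col}(U)$ and $\mathrm{range}(\Pi)$, via a net/union-bound argument together with anticoncentration of $\|\Pi w\|$ for fixed unit $w$ under the sign-flip/permutation randomisation in $S$ --- which yields $\Exs[\lambda_{\min}(C_S)^{-k}\mathbf 1\{\lambda_{\min}(C_S)<\lambda_h/2\}\mathbf 1\{\widetilde m\gre d+2k\}]\to 0$ because $m'-d+1>k$. (ii) On the event $\{\widetilde m<d+2k\}$, which for $d>2k/(\xi-\gamma)$ has probability $e^{-c_2 n}$, the crude bounds suffice once $n$ is large. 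A cleaner alternative, if available in the literature, would be to invoke an ``exact separation''/no-outlier statement for this Hadamard ensemble, i.e.\ $\lambda_{\min}(C_S)\to\lambda_h>0$ with a quantitative rate, which subsumes (i)--(ii); and in free-probability language the whole proof is simply that~\eqref{EqnNoninversemoments} is the asymptotic-freeness relation $\tau(p(C_S)\widetilde\Sigma_0)=\tau(p(C_S))\tau(\widetilde\Sigma_0)$ for polynomials $p$, which extends to $p(x)=x^{-k}$ by Stone--Weierstrass because the limiting operator $C_S$ is bounded away from $0$ --- the smallest-eigenvalue control being the price of the passage from that limit to the matricial statement~\eqref{EqnGoalLemmaB2}.
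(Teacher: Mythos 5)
Your high-level route is the same as the paper's: expand $x^{-k}$ in a Neumann series around the spectrum of $C_S$, apply the hypothesis~\eqref{EqnNoninversemoments} term by term (each power of $I-C_S$ is a finite linear combination of powers of $C_S$), and justify the interchanges of $\tau_d$, the series, and the limit $d\to\infty$. Your preliminary observations are fine: $0\preceq C_S\preceq I$ surely, $\|\widetilde\Sigma_0\|_2$ bounded by the standing conditioning assumption, the $\widetilde\Sigma_0$-weighted spectral measure $\bar\nu_d$ is a probability measure whose nonnegative moments converge to those of the l.s.d., hence $\bar\nu_d\Rightarrow\mu$ by moment determinacy on a compact support.

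The genuine gap is exactly the step you call the crux. Your argument funnels everything into the claim $\Exs\big[\lambda_{\min}(C_S)^{-k}\,\mathbf 1\{\lambda_{\min}(C_S)<\lambda_h/2\}\big]\to0$ (together with $\mathbb P(\lambda_{\min}(C_S)<\lambda_h/2)\to0$, which is an extreme-eigenvalue statement that weak convergence of $\bar\nu_d$ cannot deliver, since it only controls the expected \emph{fraction} of small eigenvalues). For this you offer only a sketched small-ball bound $\mathbb P(\lambda_{\min}(C_S)\less t\mid\widetilde m=m')\less C\,t^{\,m'-d+1}$. Bounds of this polynomial form are the ones available for Gaussian and Haar (Jacobi/MANOVA) ensembles, where explicit joint eigenvalue densities exist; here the randomness in $S$ is purely discrete (Bernoulli subsampling, random signs, a random permutation), there is no density, and the proposed net-plus-anticoncentration argument has no evident implementation --- even $\mathbb P(\lambda_{\min}(C_S)=0)=0$ for an arbitrary fixed $U$ requires justification, and no exact-separation or no-outlier theorem for this SRHT ensemble is available in the cited literature. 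So, as written, the proposal is not a proof; its unproven ingredient is arguably harder than the lemma itself. By contrast, the paper's proof of this reduction never invokes a finite-sample least-eigenvalue estimate: it writes $C_S^{-k}=\sum_{\ell}a_\ell(I-C_S)^{\ell}$ with nonnegative coefficients, exchanges $\tau_d$ with the series using the almost sure containment of the spectrum of $C_S$ in $(0,1)$, applies~\eqref{EqnNoninversemoments} termwise, and exchanges the $d\to\infty$ limit with the series by combining absolute convergence of the scalar series on $(0,1)$ with the fact that the limiting moments stay bounded away from $1$ (the l.s.d.\ being supported in $[\lambda_h,\Lambda_h]\subset(0,1)$). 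If you wish to keep your more quantitative route, you must actually supply the extreme-eigenvalue control; otherwise the series/positivity argument is the way to establish the reduction claimed in the lemma.
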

Thus, we aim to show~\eqref{EqnNoninversemoments} for all $k \gre 0$.

It holds that 
\begin{align*}
C_S = U^\top S^\top S U &= (U^\top W^\top H B)(B H W U)\\
&= U^\top W^\top H B^2 HWU \\
&= U^\top W^\top H B HW U\,,
\end{align*}
where we used $B^2 = B$ in the fourth equality. Further, we have the following equality in distribution, whose proof is deferred to Appendix~\ref{ProofEqualityDistribution}.
\begin{lemmaappendix}
\label{LemmaSomeDistributionEq}
It holds that
\begin{align}
\label{EqnSomeDistributionEq}
    U^\top W^\top H B H W U \overset{\mathrm{d}}{=} U^\top W^\top H W B W^\top H W U\,.
\end{align}
Consequently,
\begin{align}
    C_S \overset{\mathrm{d}}{=} U^\top W^\top H W B W^\top H W U\,.    
\end{align}
\end{lemmaappendix}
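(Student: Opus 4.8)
\textbf{Proof proposal for Lemma~\ref{LemmaSomeDistributionEq}.}

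The plan is to exploit the algebraic structure of the Walsh--Hadamard matrix $H=H_n$, which is real, symmetric and orthogonal ($H^\top = H$, $H^2 = I$), together with the fact that $W = DP$ is a signed permutation (so $W^\top W = I$ and conjugation by $W$ merely relabels and re-signs coordinates). First I would observe that the claimed identity is equivalent, after conjugating both sides by the orthogonal matrix $W^\top H W$ on the right-hand side, to showing that $HW B W^\top H$ and $HBH$ agree in distribution \emph{as matrices}, because $U$ appears symmetrically on both sides and the left factors $U^\top W^\top H$ are common once we insert $W W^\top = I$ appropriately. More precisely, it suffices to prove the distributional identity $W^\top H B H W \overset{\mathrm{d}}{=} W^\top H W B W^\top H W$, and then sandwich both sides between $U^\top(\cdot)U$.

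The key step is then to understand the action of conjugation by $W = DP$ on a diagonal Bernoulli matrix $B$: since $P$ is a permutation matrix and $D$ a sign matrix, $W^\top B W = P^\top D B D P = P^\top B P$ (the signs cancel because $B$ is diagonal and $D^2=I$), which is again a diagonal $0/1$ matrix, namely $B$ with its diagonal entries permuted by $P$. Because the Bernoulli entries of $B$ are i.i.d.\ and $P$ is a uniformly random permutation independent of $B$, we have $P^\top B P \overset{\mathrm{d}}{=} B$, and moreover this holds jointly with the rest of the randomness if one is careful about independence. The remaining point is that $W$ appears in two roles on the right-hand side: once ``inside'' (conjugating $B$) and once ``outside'' (as part of $H W$). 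I would handle this by conditioning on $D$ and the ``outer'' $P$, using the independence of a fresh copy of the permutation/sign randomness, or equivalently by noting that replacing $B$ by $W^\top B W$ and simultaneously $W$ by $W$ leaves the joint law unchanged because $W^\top(\cdot)W$ fixes the distribution of $B$ while commuting appropriately with the deterministic matrix $H$ only up to the conjugation we are tracking.

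The main obstacle I anticipate is bookkeeping the independence structure carefully: the same signed permutation $W$ occurs multiple times in the expression $U^\top W^\top H W B W^\top H W U$, so one cannot naively ``re-randomize'' one occurrence without affecting the others. The clean way around this is to write $W^\top H W = \widetilde H$ and note that $\widetilde H$ is itself a (random) symmetric orthogonal matrix, then check that the pair $(\widetilde H, W^\top B W)$ has the same joint law as $(\widetilde H, B)$ — this uses that conjugation by $W$ preserves the law of $B$ \emph{conditionally on $W$} (indeed $W^\top B W$ is, conditionally on $W$, still a diagonal matrix of i.i.d.\ Bernoulli's with the same parameter, since permuting i.i.d.\ coordinates and flipping signs of a $0/1$ diagonal does nothing). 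Once that conditional-law statement is established, the tower property gives the unconditional distributional identity, and substituting back yields $U^\top W^\top H B H W U \overset{\mathrm{d}}{=} U^\top W^\top H W B W^\top H W U$, which combined with the earlier reduction $C_S = U^\top W^\top H B H W U$ (using $B^2 = B$) gives the stated consequence $C_S \overset{\mathrm{d}}{=} U^\top W^\top H W B W^\top H W U$.
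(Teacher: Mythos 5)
Your proposal is correct and is essentially the paper's own argument: both proofs rest on the sign cancellation $D B D = B$ (diagonal matrices commute and $D^2 = I_n$) together with the fact that conjugating the i.i.d.~Bernoulli diagonal $B$ by the independent uniform permutation preserves its law jointly with $(D,P)$, which is exactly your statement $(\widetilde H, W^\top B W) \overset{\mathrm{d}}{=} (\widetilde H, B)$. The paper does the bookkeeping by commuting $D$ past $P$ via $\tilde D = P^\top D P$ and replacing $\tilde B = P B P^\top$ by $B$ in distribution, while you insert $W W^\top = I_n$ and condition on $W$; this is a cosmetic difference, not a different route.
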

Let $k \gre 0$. We have $W^\top W = I_n$, $U^\top U = I_d$, $B^2=B$, $H^2=H$ and $\tau_d(\widetilde \Sigma_0)=1$. Using~\eqref{EqnSomeDistributionEq}, we find
\begin{align}
\label{EqnSomeIneq111}
    \tau_d\!\left( C_S^k \widetilde \Sigma_0 \right) = \tau_d\!\left( (U^\top W^\top H W B W^\top H W U)^k \widetilde \Sigma_0 \right) = \frac{n}{d} \cdot \tau_n\!\left( X_1 (Y X_2)^k \right)\,, 
\end{align}
where we introduced the matrices $X_1 \defn W U \widetilde \Sigma_0 U^\top W^\top$, $X_2 \defn W UU^\top W^\top$ and $Y \defn HWBW^\top H$. These matrices satisfy the following collection of properties, whose proof is deferred to Appendix~\ref{ProofLemmaCollectionProperties}.
\begin{lemmaappendix}
\label{LemmaCollectionProperties}
It holds that $X_1X_2 = X_2 X_1 = X_1$, $X_2^2=X_2$, $Y^2=Y$,
\begin{align}
    \lim_{n \infty} \tau_n(X_1) = \lim_{n \infty} \tau_n(X_2)\,,
\end{align}
and the sets of matrices $\{X_1,X_2\}$ and $\{Y\}$ are asymptotically free. 

Further, for any $k \gre 1$, we have
\begin{align}
    \lim_{n \to \infty} \tau_n( X_1 (YX_2)^k ) = \lim_{n \to \infty} \tau_n(X_2(YX_2)^k)\,.
\end{align}
\end{lemmaappendix}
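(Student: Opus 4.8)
The plan is to prove the three algebraic identities by direct computation, to obtain the asymptotic freeness from the theory of asymptotically liberating sequences of random orthogonal matrices (the same tool behind Lemma~\ref{LemmaStieltjesHaar}, see \cite{anderson2014asymptotically, lacotteiterative20}), and then to deduce the moment identity by a freeness expansion.

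For the identities I would just multiply out: using $U^\top U = I_d$, $W^\top W = I_n$, $B^2 = B$ and $H^2 = I_n$ one gets $X_1 X_2 = WU\widetilde\Sigma_0 (U^\top U) U^\top W^\top = X_1$, and likewise $X_2 X_1 = X_1$, $X_2^2 = X_2$, and $Y^2 = HWB(W^\top W)B W^\top H = HWB^2 W^\top H = Y$. Cyclicity of the trace gives $\tau_n(X_1) = \tfrac1n\trace(\widetilde\Sigma_0 U^\top U) = \tfrac1n\trace(\widetilde\Sigma_0) = d/n$ (since $\trace(\widetilde\Sigma_0) = d$, because $\tau_d(\widetilde\Sigma_0)=1$) and $\tau_n(X_2) = \tfrac1n\trace(U^\top U) = d/n$, so both tend to $\gamma$.

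For the asymptotic freeness --- the heart of the matter --- I would use invariance of $\tau_n$ under conjugation to reduce any mixed word, conjugating by $W^\top$: this turns $X_1, X_2$ into the \emph{deterministic} matrices $\Theta_1 \defn U\widetilde\Sigma_0 U^\top$ and $\Theta_2 \defn UU^\top$, and, writing $N \defn HW = HDP$ so that $Y = NBN^\top$, turns $Y$ into $W^\top N B N^\top W = MBM$ with $M \defn W^\top HW = (DP)^\top H_n (DP)$ a random symmetric orthogonal matrix. The key structural facts are that $B$ is independent of $(D,P)$, hence of $M$, while $\Theta_1, \Theta_2$ are deterministic and uniformly bounded in operator norm ($\|\Theta_2\|=1$, $\|\Theta_1\|=\|\widetilde\Sigma_0\|$ bounded by the standing assumption), and that $M_n$ --- the Walsh--Hadamard matrix conjugated by a uniformly random signed permutation --- is asymptotically liberating; this is exactly the randomization that makes $C_S$ behave as the Haar/MANOVA ensemble in Lemma~\ref{LemmaStieltjesHaar}, and it follows from the results of \cite{anderson2014asymptotically, lacotteiterative20}. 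Conjugating the independent family $\{B\}$ by an asymptotically liberating sequence then makes it asymptotically free from $\{\Theta_1,\Theta_2\}$; equivalently, $\{Y\}$ is asymptotically free from $\{X_1,X_2\}$, and moreover $\tau_n(\Theta_2(MBM\Theta_2)^k)$ converges because $\Theta_2$ and $B$ have limiting spectral distributions.

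For the last claim I would set $a \defn X_1 - X_2 = WU(\widetilde\Sigma_0 - I_d)U^\top W^\top$, so that $\trace(a)=\trace(\widetilde\Sigma_0)-d=0$, i.e.\ $\tau_n(a)=0$ exactly, with $a\in\mathrm{alg}\{X_1,X_2\}$, and the identities above give $aX_2 = X_2 a = a$. The claim amounts to $\tau_n(a(YX_2)^k)\to 0$. Expanding with $Y = \tau_n(Y)+\mathring{Y}$, $X_2 = \tau_n(X_2)+\mathring{X}_2$ and pulling out scalars produces, for fixed $k$, finitely many terms of the form $(\text{bounded scalar})\cdot\tau_n\!\big(a\cdot(\text{alternating product of }\mathring{Y},\mathring{X}_2)\big)$; using $\mathring{Y}^2 = (1-2\tau_n(Y))\mathring{Y} + \tau_n(Y)(1-\tau_n(Y))I$, $\mathring{X}_2^2 = (1-2\tau_n(X_2))\mathring{X}_2 + \tau_n(X_2)(1-\tau_n(X_2))I$ and $a\mathring{X}_2 = \mathring{X}_2 a = (1-\tau_n(X_2))a$, each term reduces (with bounded coefficients) either to $\tau_n(a)=0$ or to $\tau_n$ of a genuinely alternating centered word $a\,\mathring{Y}\,\mathring{X}_2\,\mathring{Y}\cdots$ between the two asymptotically free algebras, which tends to $0$. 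Hence $\tau_n(a(YX_2)^k)\to0$, and together with the convergence of $\tau_n(X_2(YX_2)^k)$ this yields that both $\lim_n\tau_n(X_1(YX_2)^k)$ and $\lim_n\tau_n(X_2(YX_2)^k)$ exist and are equal. The hard part is the asymptotic freeness: verifying the Anderson--Farrell liberation criterion for $W^\top H_n W$ (the place where flatness of $H_n$ and the randomness of $D,P$ genuinely do the work), which is essentially the analysis carried out in \cite{lacotteiterative20}, here tracked alongside the extra deterministic matrix $\widetilde\Sigma_0$; everything else is bookkeeping.
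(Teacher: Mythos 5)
Your proposal is correct, and the first three parts (the algebraic identities, the computation $\tau_n(X_1)=\tau_n(X_2)=d/n\to\gamma$, and the freeness step) coincide with the paper's argument: the paper likewise verifies the identities by direct multiplication and obtains asymptotic freeness by invoking Corollary~3.2 of \cite{anderson2014asymptotically} with the asymptotically liberating pair $\{W,HW\}$ together with the boundedness of $\widetilde\Sigma_0$ coming from the standing condition-number assumption; your conjugation by $W^\top$, which replaces $\{X_1,X_2\}$ by the deterministic $\{U\widetilde\Sigma_0U^\top,UU^\top\}$ and $Y$ by $MBM$ with $M=W^\top HW$, is just a trace-invariant reformulation of the same application (the liberation condition involves exactly the matrix $W^\top HW$ in both formulations), and like the paper you ultimately defer this hard step to \cite{anderson2014asymptotically, lacotteiterative20}. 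Where you genuinely diverge is the final moment identity. The paper proceeds by induction on $k$: it expands both $\varphi\bigl((a-\varphi(a))\bigl((b-\varphi(b))(c-\varphi(c))\bigr)^k\bigr)=0$ and the analogous expression with $c$ in place of $a$, uses $b^2=b$, $c^2=c$ to collapse every non-fully-alternating cross term to $\varphi\bigl(a(bc)^\ell\bigr)$ or $\varphi\bigl(c(bc)^\ell\bigr)$ with $\ell<k$, invokes the induction hypothesis to match these, and subtracts. You instead work with the exactly centered difference $a=X_1-X_2$ (using $\trace(\widetilde\Sigma_0)=d$), and reduce $\tau_n\bigl(a(YX_2)^k\bigr)$ directly via the quadratic relations for centered idempotents and $aX_2=X_2a=a$ to a combination of $\tau_n(a)=0$ and genuinely alternating centered words, which vanish in the limit by freeness; this avoids the induction altogether and makes the role of the idempotent structure explicit, at the modest cost of tracking the bounded scalar coefficients $\tau_n(Y)=m/n$ and $\tau_n(X_2)=d/n$ through the reduction. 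Both arguments rest on the same freeness input and are equally rigorous; the only shared loose end (present in the paper as well) is the routine bookkeeping of centering at $\tau_n$ versus at the limiting trace, which is harmless since all norms involved are uniformly bounded.
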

Now, observe that
\begin{align*}
    \tau_n(X_2(YX_2)^k) &= \tau_n( WUU^\top W^\top(HWBW^\top H W UU^\top W^\top)^k )\\
    &= \frac{d}{n}\tau_d( (U^\top W^\top HWBW^\top H W U)^k )\\
    &= \frac{d}{n} \tau_d(C_S^k)\,,
\end{align*}
where we used the commutativity of the trace in the second equality, and the equality in distribution~\eqref{EqnSomeDistributionEq} for the third equality. Consequently, 
\begin{align}
\label{EqnSomeIneq112}
    \lim_{n \to \infty} \tau_n( X_1 (YX_2)^k ) = \gamma \lim_{d \to \infty} \tau_d(C_S^k)\,.
\end{align}
Combining the above equality~\eqref{EqnSomeIneq112} with equality~\eqref{EqnSomeIneq111}, we obtain the claimed result~\eqref{EqnNoninversemoments}.

\subsection{Proof of Lemma~\ref{LemmaSomeDistributionEq}}
\label{ProofEqualityDistribution}

Note that both $B$ and $D$ are diagonal matrices whose diagonal entries are i.i.d.~random variables, and $P$ is a permutation matrix. Define $\tilde{B}=P B P^\top$ and $\tilde{D}=P^\top DP$, then $\tilde{B}\stackrel{d}{=}B$, $\tilde{D}\stackrel{d}{=}D$, 
\begin{align}
\label{EqnDP=PD}
    DP=P\tilde{D},\quad P^\top D=\tilde{D}P^\top\,.
\end{align}
It follows that
\begin{align*}
U^\top W^\top H W B W^\top H W U &= U^\top P^\top D H D P B P^\top D H D P U\\
&= U^\top P^\top D H P \tilde{D} B \tilde{D} P^\top H D P U\\
&=U^\top  P^\top DH_n PB\tilde{D}^2 P^\top H_n DP U\\
&=U^\top  P^\top DH_n P BP^\top H_n DP U\\
&=U^\top  P^\top DH_n\tilde{B} H_n DP U\\
&\stackrel{d}{=}U^\top P^\top DH_n B H_n DP U,
\end{align*}
where the second equation follows from \eqref{EqnDP=PD}, the third equation holds because $\tilde{D}$ and $B$ are diagonal so they commute, while the fourth equation holds because $\tilde{D}^2 = I_n$.

\subsection{Proof of Lemma~\ref{LemmaOrthogonalFamilyGaussian} -- Alternative construction of the polynomials $\{\Pi_k\}$}
\label{ProofLemmaOrthogonalFamilyGaussian}
We recall that for a polynomial $P$ and a measure (resp.~density) $\mu$, we will denote $\mu[P] \defn \int_\real P(x) \mu(x) \mathrm{d}\mu(x)$ (resp.~$\mu[P] \defn \int_\real P(x)\mu(x) \mathrm{d}x$). Thus, for a density $\mu$, the reader should be aware that $\mu[x]$ and $\mu(x)$ refer to different quantities. 

We present an alternative construction of the orthogonal family $\{\Pi_k\}$ with respect to $\mu_\rho$, explicitly based on the Chebyshev polynomials of the second kind. This explicit construction allows us to leverage several properties of the polynomials $\{\Pi_k\}$ which are useful to perform calculations and prove Lemma~\ref{LemmaOrthogonalFamilyGaussian}, as well as Theorem~\ref{TheoremOptimalAlgorithmGaussian}. 

We introduce the shifted Chebyshev polynomials of the second kind, which are defined by the recurrence
\begin{align}
\label{EqnRecursionPolynomialsShiftedChebyshev}
    Q_0(x) = 1\,,\quad Q_1(x) = \frac{x-(1\!+\!\rho)}{\sqrt{\rho}}\,,\quad Q_{k+1}(x) = \frac{x-(1\!+\!\rho)}{\sqrt{\rho}}\, Q_k(x) - Q_{k-1}(x)\,.
\end{align}
A standard result states that the polynomials $Q_k$ are orthonormal with respect to the measure $\nu(x) \mathrm{d}x \defn x \mu_\rho(x) \mathrm{d}x$. We set $\widehat \Pi_0(x)=1$, and for $k \gre 1$,
\begin{align}
\label{EqnOrthogonalPolynomialsMP}
    \widehat \Pi_k(x) \defn 1 - \sum_{j=1}^k (-1)^{j-1} \sqrt{\rho}^{j-1} \, x Q_{k-1}(x)\,.
\end{align}
For instance, we have $\widehat \Pi_1(x) = 1- x$ and $\widehat \Pi_2(x) = 1 - (2+\rho) x + x^2$. 

We aim to show that $\{\widehat \Pi_k\}$ is an orthogonal family with respect to $\mu_\rho$ and then, that $\widehat \Pi_k = \Pi_k$.

First, we show that the polynomials $\widehat \Pi_k$ form an orthonormal family with respect to $\mu_\rho$ such that $\mathrm{deg}(\widehat \Pi_k)\!=\!k$ and $\widehat \Pi_k(0)\!=\!1$. For $k\!\gre\!1$, we define the polynomial $T_k(x)\!=\!x Q_{k-1}(x)$ and the measure $\lambda_\rho(x)\!=\!x^{-1}\mu_\rho(x)$. We have that $\lambda_\rho[T_kT_\ell]\!=\!\nu_\rho[Q_{k-1}Q_{\ell-1}]\!=\!\delta_{k\ell}$, so that the $T_k$ are orthonormal with respect to $\lambda_\rho$. Since $\mathrm{deg}(Q_{k-1})\!=\!k\!-\!1$, we have $\mathrm{deg}(T_k)\!=\!k$. We also have $T_k(0)=0 \cdot Q_{k-1}(0)\!=\!0$.

Second, we show that $\mu_\rho[Q_k]\!=\!(-1)^k \sqrt{\rho}^k$, which will immediately imply that 
\begin{align}
\lambda_\rho[T_k] = \lambda_\rho[xQ_{k-1}(x)] = \mu_\rho[Q_{k-1}] = (-1)^{k-1}\sqrt{\rho}^{k-1}\,.
\end{align}
We denote $u_k \!\defn\! \mu_\rho[Q_k]$. The measure $\mu_\rho$ is a probability measure, so that $u_0=1$. Further, we have 
\begin{align*}
    u_1 = \mu_\rho[Q_1] = \int_a^b \frac{x-(1+\rho)}{\sqrt{\rho}} \mu_\rho(x)\, \mathrm{d}x = \frac{-1-\rho + \int_a^b x \mu_\rho(x)\,\mathrm{d}x }{\sqrt{\rho}}\,.
\end{align*}
The first moment $\mu_\rho[x]$ is equal to $1$, so that $u_1=-\sqrt{\rho}$. From the recurrence relationship~\eqref{EqnRecursionPolynomialsShiftedChebyshev}, we obtain $u_{k+1}=-\frac{1+\rho}{\sqrt{\rho}}u_k - u_{k-1}$. The characteristic equation $x^2+\frac{1+\rho}{\sqrt{\rho}}x+1=0$ has roots $-1/\sqrt{\rho}$ and $-\sqrt{\rho}$. Therefore, $u_k=\alpha \frac{(-1)^k}{\sqrt{\rho}^k}+\beta (-1)^k \sqrt{\rho}^k$ for some $\alpha, \beta \in \real$. Using the initial values $u_0$ and $u_1$, we find $\alpha=0$ and $\beta=1$. This yields the claimed formula for $u_k$. 

Then, using the definition~\eqref{EqnOrthogonalPolynomialsMP}, we have
\begin{align}
    \widehat \Pi_k &= 1 - \sum_{j=1}^k (-1)^{j-1} \sqrt{\rho}^{j-1} \, x Q_{k-1}(x)\\
    &= 1 - \sum_{j=1}^k \lambda_\rho[T_k] \, T_k(x)\,.
\end{align}
Hence, recognizing the Gram-Schmidt orthogonalization of the constant polynomial $1$ with respect to $\{T_1, \dots, T_k\}$, we deduce that the family $\{\widehat \Pi_k,T_1,\dots,T_k\}$ is orthogonal with respect to $\lambda_\rho$, and is a basis of $\real_k[X]$. Consider now the variational problem
\begin{align}
\label{EqnVariationalProblem2}
    \min_{p \in \real_k^0[X]} \int p^2(x) \lambda_\rho(x) \, \mathrm{d}x\,.
\end{align}
Let $p \in \real_k^0[X]$ and decompose $p$ as $p\!=\!\alpha_0 \widehat \Pi_k \!+\! \sum_{j=1}^k \alpha_j T_k$. Using $p(0)\!=\!1$, $\widehat \Pi_k(0)\!=\!1$ and $T_j(0)\!=\!0$, we get that $\alpha_0$ must be equal to $1$. Then,
\begin{align*}
    \int p^2(x) \lambda_\rho(x) \, \mathrm{d}x &= \int \widehat \Pi_k^2(x) \lambda_\rho(x) \, \mathrm{d}x + 2 \sum_{j=1}^k \alpha_j \int \widehat \Pi_k(x) T_j(x) \lambda_\rho(x) \,\mathrm{d}x\\
    & \quad + \int (\sum_{j=1}^k \alpha_j T_j(x))^2 \lambda_\rho(x) \,\mathrm{d}x\,.
\end{align*}
The cross-term is equal to $0$ by orthogonality of the family $\{\widehat \Pi_k, T_1, \dots, T_k\}$. The third-term is non-negative, and equal to $0$ if and only if $p = \widehat \Pi_k$. Therefore, the minimizer of the variational problem~\eqref{EqnVariationalProblem2} is exactly $\widehat \Pi_k$. On the other hand, applying Lemma~\ref{LemmaOptimalSolutionPolynomialOptimizationProblem} with $\nu\!=\!x\lambda_\rho\!=\!\mu_\rho$, we know that the solution of each of the problems~\eqref{EqnVariationalProblem2} (for varying $k$) is unique, and the solutions form an orthogonal family with respect to $x \lambda_\rho(x)\mathrm{d}x\!=\!\mu_\rho(x)\mathrm{d}x$. Thus, we obtain that the family $\{\widehat \Pi_k\}$ is orthogonal with respect to $\mu_\rho$.

Finally, we show that the sequence $\{\widehat \Pi_k\}$ satisfies the recurrence relationship~\eqref{EqnRecurrenceOrthogonalPolynomialsGaussian}. Observe that
\begin{align*}
    x \widehat \Pi_k(x) = x - \sum_{j=1}^k \lambda_\rho[T_j] x T_j(x)
    &= x - \lambda_\rho[T_1] x T_1(x) - \sum_{j=2}^k \lambda_\rho[T_j] x T_j(x)\\
    &= x - x^2 - \sum_{j=2}^k \lambda_\rho[T_j] x T_j(x)\,.
\end{align*}
Multiplying~\eqref{EqnRecursionPolynomialsShiftedChebyshev} by $x$ and using the definition $T_k(x)=x Q_{k-1}(x)$, we find that for $k \gre 2$,
\begin{align*}
    x \, T_j(x) = \sqrt{\rho} \left(T_{j-1}(x) + T_{j+1}(x)\right) + (1+\rho) T_j(x)\,.
\end{align*}
Using the above decomposition of $xT_j(x)$, it obtain $\sum_{j=2}^n \lambda_\rho(T_j) x T_j(x) = s_1 + s_2 + s_3$, where 
\begin{align*}
    s_1 \defn \sqrt{\rho}\, \sum_{j=2}^k \lambda_\rho(T_j) T_{j+1}(x) &= \sum_{j=2}^k (-1)^{j-1} \sqrt{\rho}^j T_{j+1}(x)\\ 
    &= \sum_{j=3}^{k+1} (-1)^{j} \sqrt{\rho}^{j-1} T_j(x)\\
    &= \widehat \Pi_{k+1}(x) - 1 + T_1(x) - \sqrt{\rho} \, T_2(x)\\ &= \widehat \Pi_{k+1}(x) - 1 + x - x^2 + (1+\rho)\,x\,,
\end{align*}
the second term is $s_2 \defn \sqrt{\rho}\! \sum_{j=2}^k \lambda_\rho[T_j] T_{j-1}(x) \!=\! \sum_{j=2}^k (-1)^{j-1} \sqrt{\rho}^j T_{j-1}(x) \!=\! \rho \! \left(\widehat \Pi_{k-1}(x)\!-\!1\right)$ and the third term is $s_3 \defn (1+\rho) \! \sum_{j=2}^k \lambda_\rho[T_j] T_j(x) \!=\!-(1\!+\!\rho) \! \left(\widehat \Pi_k(x) \!-\! 1\! +\! x\right)$. Consequently,
\begin{align*}
    x \Pi_k(x) &= x - x^2 - s_1 - s_2 - s_3\\
    &= x - x^2 - \widehat \Pi_{k+1}(x) + 1 - x + x^2 - (1+\rho) x - \rho\,(\widehat \Pi_{k-1}(x) \!-\! 1) + (1+\rho) (\widehat \Pi_k(x) \!-\! 1 \!+\! x)\\
    &= - \widehat \Pi_{k+1}(x) - \rho \, \widehat \Pi_{k-1}(x) + (1+\rho) \widehat \Pi_k(x)\,,
\end{align*}
which is the claimed recurrence. We deduce that $\widehat \Pi_k = \Pi_k$, and that the family $\{\Pi_k\}$ is orthogonal with respect to $\mu_\rho$.

\section{Description of numerical experiments}
\label{SectionExperimentalSetup}

Numerical simulations are run in \textit{Python} with the numerical linear algebra module \textit{NumPy} and the scientific computation module \textit{SciPy}, on a machine with $256$Gb of memory.

To generate an $m \times n$ Haar matrix $S_h$, we sample an $m \times n$ matrix $G$ with i.i.d.~Gaussian entries $\mathcal{N}(0,1)$, and we set $S_h$ to be its $m \times n$ matrix of right singular vectors. To generate an $m \times n$ SRHT matrix, we follow the description given in Section~\ref{SectionIntroduction}. The plots correspond to one trial for each embedding.

\subsection{Figure~\ref{FigSRHTDensity}}

We set $n=8192$, $d=1640$ and $m \in \{1720, 3280, 4915\}$. We generate the plots of $\mu_\rho$ and $f_{h,r}$ by discretizing their respective supports with step size $1e\!-\!5$.

\subsection{Figures~\ref{FigComparisonIHS} and~\ref{FigComparisonRates}}

We generate an $n \times d$ Gaussian matrix $G$ with i.i.d.~entries, and we compute its left singular matrix $U$ and right singular matrix $V$. Then, we set $A = U \Sigma V^\top$, where $\Sigma$ is a $d \times d$ diagonal matrix with entries $\Sigma_j = 0.98^j$ for $j = 1,\dots, d$. We generate a vector $b$ using a planted model $b = A x_\text{pl} + \frac{1}{\sqrt{n}} \mathcal{N}(0,I_n)$, and $x_\text{pl} \sim \frac{1}{\sqrt{d}}\mathcal{N}(0, I_d)$. Note that, although the performance of the algorithms do not depend on the data $A$ and $b$, we choose a standard statistical model to generate the data, and a data matrix with a very large condition number.

Algorithms~\ref{AlgorithmOptimalFirstOrderGaussian} and~\ref{AlgorithmOptimalFirstOrderHaar} are implemented following their pseudo-code description. We use small perturbations of the algorithmic parameters by setting $a^\delta_t = (1+\delta)a_t$ and $b_t^\delta = (1-\delta) b_t$ with $\delta=0.01$ -- where $a_t$ and $b_t$ correspond to the parameters as described in Theorem~\ref{TheoremGeneralOptimalFirstOrderMethod}. Similarly, for the Heavy-ball method with fixed SRHT embeddings and parameters derived based on our new asymptotitc edge eigenvalues ("SRHT (edge eig.)"), we use instead the slightly perturbed edge eigenvalues $\lambda^\delta_h = (1-\delta) \lambda_h$ and $\Lambda^\delta_h = (1+\delta) \Lambda_h$, with $\delta=0.01$. These small perturbations of the parameters are necessary in practice due to the finite-sample approximations. For the Heavy-ball method with fixed SRHT embeddings based on the bounds of~\citet{tropp2011improved} ("SRHT (baseline)"), we use the parameters prescribed in~\cite{lacotte2019faster}. For the Heavy-ball method with refreshed SRHT embeddings ("SRHT (refreshed)"), we use the parameters prescribed in~\cite{lacotteiterative20}. For each algorithm, results are averaged over $20$ independent trials (using the same data $A$ and $b$).

\section{Proofs of auxiliary results}

\subsection{Proof of Lemma~\ref{LemmaReductionDecoupling}}
\label{ProofLemmaReductionDecoupling}

Suppose that~\eqref{EqnNoninversemoments} holds. Let $k \gre 0$. We have 
\begin{align}
    C_S^{-k} = (I_d - (I_d - C_S))^{-k} = \left(\sum_{j=0}^{\infty} (I_d - C_S)^j \right)^k\,, 
\end{align}
where the series expansion $(I_d - (I_d - C_S))^{-1} = \sum_{j=0}^{\infty} (I_d - C_S)^j$ holds almost surely, due to the fact that $C_S$ has spectrum within $(0,1)$ almost surely. There exist coefficients $\{a_\ell\}$ such that $\left(\sum_{j=0}^{\infty} x^j \right)^k = \sum_{\ell=0} a_\ell x^\ell$, and such that the sum is absolutely convergent, i.e., $\sum_{\ell=0} |a_\ell| |x|^\ell < +\infty$, for any $x\in (0,1)$. Consequently,
\begin{align}
    C_S^{-k} = \sum_{\ell=0}^\infty a_\ell C_S^\ell
\end{align}
Then, by absolute convergence of $\sum_\ell a_\ell x^\ell$ and using the fact that $\|C_S\|_2 < 1$, we can exchange the operator $\tau_d$ and the infinite sum, so that
\begin{align}
    \tau_d\!\left(C_S^{-k}\right) = \tau_d\!\left(\sum_{\ell=0}^\infty a_\ell C_S^\ell\right) = \sum_{\ell=0}^{\infty} a_\ell \tau_d\!\left(C_S^\ell\right)\,.
\end{align}
and writing the latter as a series in $C_S$, we obtain the claimed result. Due to the fact that $\sup_\ell \lim_{d \infty} \tau_d\!\left(C_S^\ell\right) < 1$, and using again the absolute convergence of $\sum_\ell a_\ell x^\ell$ for $|x| < 1$, it follows that
\begin{align}
    \lim_{d \to \infty} \tau_d\!\left(C_S^{-k}\right) &= \lim_{d \to \infty} \sum_{\ell=0}^{\infty} a_\ell \tau_d\!\left(C_S^\ell\right)\\
    &= \sum_{\ell=0}^{\infty} a_\ell \lim_{d \to \infty} \tau_d\!\left(C_S^\ell\right)\\
    &= \sum_{\ell=0}^{\infty} a_\ell \lim_{d \to \infty} \tau_d\!\left(C_S^\ell \widetilde \Sigma_0\right)\,.
\end{align}
Using the same arguments, we find that 
\begin{align}
    \sum_{\ell=0}^{\infty} a_\ell \lim_{d \to \infty} \tau_d\!\left(C_S^\ell \widetilde \Sigma_0\right) = \lim_{d \to \infty} \tau_d\!\left(C_S^{-k} \widetilde \Sigma_0 \right)\,,
\end{align}
and we conclude that 
\begin{align}
    \tau_d\!\left(C_S^{-k} \widetilde \Sigma_0 \right) = \tau_d\!\left(C_S^{-k}\right)
\end{align}

\subsection{Proof of Lemma~\ref{LemmaCollectionProperties}}
\label{ProofLemmaCollectionProperties}

We have 
\begin{align*}
    X_1 X_2 = W U \widetilde \Sigma_0 U^\top W^\top W U U^\top W^\top = W U \widetilde \Sigma_0 U^\top W^\top = X_1
\end{align*}
where we used in the second equality $U^\top W^\top W U = I_d$. Similarly, we obtain $X_2 X_1 = X_1$.

We have
\begin{align*}
    Y^2 = (HWBW^\top H)(HWBW^\top H) = HWBW^\top H = Y
\end{align*}
where we used in the second equality $BW^\top H H WB = B$.

We have
\begin{align*}
    X_2^2 = W UU^\top W^\top W UU^\top W^\top = W UU^\top W^\top = X_2\,,
\end{align*}
where we used in the second equality $U^\top W^\top W U = I_d$.

Further, it holds that
\begin{align*}
    \lim_{n \infty} \tau_n(X_1) = \gamma \lim_{d \infty} \tau_d(\widetilde \Sigma_0) = \gamma = \lim_{n \infty} \tau_n(X_2),.
\end{align*}
We show asymptotic freeness. Note that the matrices $UU^\top$, $B$ and $\widetilde \Sigma_0$ have l.s.d.~compactly supported. For the latter, this directly follows from our initial assumption that the condition number of the matrix $U^\top A\Exs[x_0 x_0^\top] A^\top U + U^\top bb^\top U$ remains bounded. Then, applying Corollary~3.2 from~\cite{anderson2014asymptotically} with the set of asymptotically liberating matrices $\{W, HW\}$, we immediately obtain asymptotic freeness of $\{X_1, X_2\}$ and $\{Y\}$.

It remains to show that for any $k \gre 0$, 
\begin{align}
\label{EqnPropertyNCCalculus}
    \lim_{n \to \infty} \tau_n( X_1 (YX_2)^k ) = \lim_{n \to \infty} \tau_n(X_2 (YX_2)^k)\,.
\end{align}
For the rest of this proof, we use the more compact notations $a \defn X_1$, $b\defn Y$, $c \defn X_2$ and $\varphi = \lim_{n \to \infty} \tau_n$. We show~\eqref{EqnPropertyNCCalculus} by induction over $k \gre 0$. For $k=0$, the claim is true because $\varphi(a)=\varphi(c)$ as shown above. Fix $k \gre 1$ and suppose that the claim is true for $j=0,\dots,k-1$. By asymptotic freeness, we have
\begin{align}
    \varphi\Big((a-\varphi(a)) \big((b-\varphi(b))(c-\varphi(c))\big)^k\Big) = 0\,.
\end{align}
We expand the left-hand side of the above equation as
\begin{align*}
    & \quad \varphi\Big((a-\varphi(a)) \big((b-\varphi(b))(c-\varphi(c))\big)^k\Big)\\ = & \quad \varphi\big(a(bc)^k\big) + \sum_{\substack{\delta_1,\dots,\delta_{2k} \in \{0,1\}\\(\delta_1,\dots,\delta_{2k}) \neq (1,\dots,1)}} \varphi\Big( a b^{\delta_1} c^{\delta_2} b^{\delta_3} \dots c^{\delta_{2k}} (-\varphi(b))^{1-\delta_{1}} \dots (-\varphi(c))^{1-\delta_{2k}} \Big)\\
    &= \quad \varphi\big(a(bc)^k\big) + \sum_{\substack{\delta_1,\dots,\delta_{2k} \in \{0,1\}\\(\delta_1,\dots,\delta_{2k}) \neq (1,\dots,1)}} (-\varphi(b))^{1-\delta_{1}} \dots (-\varphi(c))^{1-\delta_{2k}} \varphi\Big( a b^{\delta_1} c^{\delta_2} \dots c^{\delta_{2k}} \Big)\,.
\end{align*}
For binary exponents $(\delta_1, \dots, \delta_{2k}) \neq (1, \dots,1)$, the product of non-commutative matrices $b^{\delta_1} c^{\delta_2} b^{\delta_3} \dots c^{\delta_{2k}}$ must have a sub-product of the form $bb$ or $cc$. Using the fact that $b^2 = b$ and $c^2=c$, it follows that there exists some integer $\ell$ such that $0 \less \ell < k$, and 
\begin{align*}
    b^{\delta_1} c^{\delta_2} b^{\delta_3} \dots c^{\delta_{2k}} = (bc)^\ell\,.
\end{align*}
Using the induction hypothesis, we have
\begin{align*}
    \varphi\Big( a b^{\delta_1} c^{\delta_2} \dots c^{\delta_{2k}} \Big) = \varphi(a (bc)^\ell) = \varphi( c (bc)^\ell ) = \varphi\Big(c b^{\delta_1} c^{\delta_2} \dots c^{\delta_{2k}} \Big) \,.
\end{align*}
Consequently, we get
\begin{align*}
    & \quad \varphi\Big((a-\varphi(a)) \big((b-\varphi(b))(c-\varphi(c))\big)^k\Big)\\ = & \quad \varphi\big(a(bc)^k\big) + \sum_{\substack{\delta_1,\dots,\delta_{2k} \in \{0,1\}\\(\delta_1,\dots,\delta_{2k}) \neq (1,\dots,1)}} \varphi\Big( c b^{\delta_1} c^{\delta_2} b^{\delta_3} \dots c^{\delta_{2k}} (-\varphi(b))^{1-\delta_{1}} \dots (-\varphi(c))^{1-\delta_{2k}} \Big)  
\end{align*}
On the other hand, using asymptotic freeness again, we have
\begin{align*}
    0= & \quad \varphi\Big((c-\varphi(c)) \big((b-\varphi(b))(c-\varphi(c))\big)^k\Big)\\ = & \quad \varphi\big(c(bc)^k\big) + \sum_{\substack{\delta_1,\dots,\delta_{2k} \in \{0,1\}\\(\delta_1,\dots,\delta_{2k}) \neq (1,\dots,1)}} \varphi\Big( c b^{\delta_1} c^{\delta_2} b^{\delta_3} \dots c^{\delta_{2k}} (-\varphi(b))^{1-\delta_{1}} \dots (-\varphi(c))^{1-\delta_{2k}} \Big)     
\end{align*}
Combining the two above sets of equalities, we obtain
\begin{align*}
    \varphi\big(a(bc)^k\big) = \varphi\big(c(bc)^k\big)\,,
\end{align*}
which concludes the induction, and the proof.

\end{document}